\newtheorem{lemma}{Lemma}
\newtheorem{proposition}{Proposition}
\newtheorem{theorem}{Theorem}
\newtheorem{corollary}{Corollary}
\theoremstyle{definition}
\newtheorem{example}{Example}
\newtheorem{remark}{Remark}
\newtheorem{definition}{Definition}
\begin{document}

\title[Schubert decomposition for ind-varieties of generalized flags]{Schubert decompositions for ind-varieties of generalized flags}
\author{Lucas Fresse}
\address{Universit\'e de Lorraine, CNRS, Institut \'Elie Cartan de Lorraine, UMR 7502, Vandoeu\-vre-l\`es-Nancy, F-54506, France}
\email{lucas.fresse@univ-lorraine.fr}
\author{Ivan Penkov}
\address{Jacobs University Bremen, Campus Ring 1, 28759 Bremen, Germany}
\email{i.penkov@jacobs-university.de}

\keywords{Classical ind-group, Bruhat decomposition, Schubert decomposition, generalized flag, homogeneous ind-variety}
\subjclass[2010]{14M15, 14M17, 20G99}

\maketitle

\begin{abstract}
Let $\mathbf{G}$ be one of the ind-groups $GL(\infty)$, $O(\infty)$, $Sp(\infty)$ and $\mathbf{P}\subset \mathbf{G}$ be a splitting parabolic ind-subgroup.\ The ind-variety $\mathbf{G}/\mathbf{P}$ has been identified with an ind-variety of generalized flags in \cite{DP}. In the present paper we define a Schubert cell on $\mathbf{G}/\mathbf{P}$ as a $\mathbf{B}$-orbit on $\mathbf{G}/\mathbf{P}$, where $\mathbf{B}$ is any Borel ind-subgroup of $\mathbf{G}$ which intersects $\mathbf{P}$ in a maximal ind-torus. A significant difference with the finite-dimensional case is that in general $\mathbf{B}$ is not conjugate to an ind-subgroup of $\mathbf{P}$, whence $\mathbf{G}/\mathbf{P}$ admits many non-conjugate Schubert decompositions.\ We study the basic properties of the Schubert cells, proving in particular that they are usual finite-dimensional cells or are isomorphic to affine ind-spaces.

We then define Schubert ind-varieties as closures of Schubert cells and study the smoothness of Schubert ind-varieties.\ Our approach to Schubert ind-varieties differs from an earlier approach by H. Salmasian \cite{Salmasian}.
\end{abstract}

\section{Introduction}

If $G$ is a reductive algebraic group, the flag variety $G/B$ is the most important geometric object attached to $G$.\ If $\mathbf{G}$ is a classical ind-group, $\mathbf{G}=GL(\infty),O(\infty),Sp(\infty)$, then there are infinitely many conjugacy classes of splitting Borel subgroups $\mathbf{B}$, and hence there are infinitely many flag ind-varieties $\mathbf{G}/\mathbf{B}$. These smooth ind-varieties have been studied in \cite{DP1,DP,DPW}, and in \cite{DP} each such ind-variety has been described explicitly as the ind-variety of certain generalized flags in the natural representation $V$ of $\mathbf{G}$.
A generalized flag is a chain of subspaces of $V$ satisfying two conditions (see Definition \ref{definition-1}), but notably such a chain is rarely ordered by an ordered subset of $\mathbb{Z}$.

In this paper we undertake a next step in the study of the generalized flag ind-varieties $\mathbf{G}/\mathbf{B}$, and more generally any ind-variety of the form $\mathbf{G}/\mathbf{P}$ where $\mathbf{P}$ is a splitting parabolic subgroup of $\mathbf{G}$. Namely, we define and study the Schubert decompositions of the ind-varieties  $\mathbf{G}/\mathbf{P}$.\ The Schubert decomposition is a key to many classical theorems in the finite-dimensional case, and its role in the study of the geometry of the ind-varieties $\mathbf{G}/\mathbf{P}$ should be equally important. We define the Schubert cells on $\mathbf{G}/\mathbf{P}$ as the $\mathbf{B}$-orbits on $\mathbf{G}/\mathbf{P}$ for any Borel ind-subgroup $\mathbf{B}$ which contains a common splitting maximal ind-torus with $\mathbf{P}$. The essential difference with the finite-dimensional case is that $\mathbf{B}$ is not necessarily conjugate to a Borel subgroup of $\mathbf{P}$.\ This leads to the existence of many non-conjugate Schubert decompositions of a given ind-variety of generalized flags $\mathbf{G}/\mathbf{P}$. We compute the dimensions of the cells of all Schubert decompositions of $\mathbf{G}/\mathbf{P}$ for any splitting Borel subgroup $\mathbf{B}\subset \mathbf{G}$. We also point out the Bruhat decomposition into double cosets of the ind-group $\mathbf{G}$ which results from a Schubert decomposition of $\mathbf{G}/\mathbf{P}$.

In the last part of the paper we study the smoothness of Schubert ind-varieties which we define as closures of Schubert cells. We establish a criterion for smoothness which allows us to conclude that certain known criteria for smoothness of finite-dimensional Schubert varieties pass to the limit at infinity.

In his work \cite{Salmasian}, H.\ Salmasian introduced Schubert ind-subvarieties of $\mathbf{G}/\mathbf{B}$ as arbitrary direct limits of Schubert varieties on finite-dimensional flag subvarieties of $\mathbf{G}/\mathbf{B}$. He showed that such an ind-variety may be singular at all of its points.\ With our definition, which takes into account the natural action of $\mathbf{G}$ on $\mathbf{G}/\mathbf{B}$, a Schubert ind-variety has always a smooth big cell.

\section{Preliminaries}

In what follows $\mathbb{K}$ is an algebraically closed field of characteristic zero.
All varieties and algebraic groups are defined over $\mathbb{K}$. If $A$ is a finite or infinite set, then $|A|$ denotes its cardinality.\ If $A$ is a subset of the linear space $V$, then $\langle A\rangle$ denotes the linear subspace spanned by $A$.

\subsection{Ind-varieties}

\label{section-2-1}

An {\em ind-variety} is the direct limit $\mathbf{X}=\lim\limits_\to X_n$ of a chain of morphisms of algebraic varieties
\begin{equation}
\label{rel1}
X_1\stackrel{\varphi_1}{\to} X_2\stackrel{\varphi_2}{\to}\cdots\stackrel{\varphi_{n-1}}{\to}
X_n\stackrel{\varphi_n}{\to} X_{n+1}\stackrel{\varphi_{n+1}}{\to}\cdots.
\end{equation}
Note that the direct limit of the chain (\ref{rel1}) does not change if we replace the sequence $\{X_n\}_{n\geq 1}$ by a subsequence $\{X_{i_n}\}_{n\geq 1}$ and the morphisms $\varphi_n$ by the compositions $\tilde\varphi_{i_n}:=\varphi_{i_{n+1}-1}\circ\cdots\circ\varphi_{i_n+1}\circ\varphi_{i_n}$. Let $\mathbf{X}'$ be a second ind-variety obtained as the direct limit of a chain
\[
X'_1\stackrel{\varphi'_1}{\to} X'_2\stackrel{\varphi'_2}{\to}\cdots\stackrel{\varphi'_{n-1}}{\to}
X'_n\stackrel{\varphi'_n}{\to} X'_{n+1}\stackrel{\varphi'_{n+1}}{\to}\cdots.
\]
A {\em morphism of ind-varieties} $\mathbf{f}:\mathbf{X}'\to\mathbf{X}$ is a map from $\lim\limits_\to X'_n$ to $\lim\limits_\to X_n$ induced by a collection of morphisms of algebraic varieties $\{f_n:X'_n\to X_{i_n}\}_{n\geq 1}$ such that $\tilde\varphi_{i_n}\circ f_n=f_{n+1}\circ\varphi'_{n}$ for all $n\geq 1$. The {\em identity morphism} $\mathrm{id}_\mathbf{X}$ is a morphism that induces the identity as a set-theoretic map  $\mathbf{X}\to\mathbf{X}$.\ A morphism $\mathbf{f}:\mathbf{X}'\to\mathbf{X}$ is an {\em isomorphism} if there exists a morphism $\mathbf{g}:\mathbf{X}\to\mathbf{X}'$ such that $\mathbf{g}\circ\mathbf{f}=\mathrm{id}_{\mathbf{X}'}$ and $\mathbf{f}\circ\mathbf{g}=\mathrm{id}_{\mathbf{X}}$.

Any ind-variety $\mathbf{X}$ is endowed with a topology by declaring a subset $\mathbf{U}\subset\mathbf{X}$ {\em open} if its inverse image by the natural map $X_m\to\lim\limits_{\to}X_n$ is open for all $m$.
Clearly, any open (resp., closed) (resp., locally closed) subset $\mathbf{Z}$ of $\mathbf{X}$ has a structure of ind-variety induced by the ind-variety structure on $\mathbf{X}$.
We call $\mathbf{Z}$ an {\em ind-subvariety} of $\mathbf{X}$.

In what follows we only consider chains (\ref{rel1}) where the morphisms $\varphi_n$ are inclusions, so that we can write $\mathbf{X}=\bigcup_{n\geq 1} X_n$.
Then the sequence $\{X_n\}_{n\geq 1}$ is called {\em exhaustion} of $\mathbf{X}$.

Let $x\in \mathbf{X}$, so that $x\in X_n$ for $n$ large enough. Let
$\mathfrak{m}_{n,x}\subset \mathcal{O}_{X_n,x}$ be the maximal ideal of
the localization at $x$ of $\mathcal{O}_{X_n}$. For each $k\geq 1$ we have an
epimorphism
\begin{equation}
\label{**}
\varphi_{n,k}:S^k(\mathfrak{m}_{n,x}/\mathfrak{m}_{n,x}^2)\to \mathfrak{m}_{n,x}^k/\mathfrak{m}_{n,x}^{k+1}.
\end{equation}
Note that the point $x$ is smooth in $X_n$ if and only if
$\varphi_{n,k}$ is an isomorphism for all $k$. By taking the
inverse limit, we obtain a map
\[\hat\varphi_k:=\lim_{\leftarrow}\varphi_{n,k}:\lim_{\leftarrow}S^k(\mathfrak{m}_{n,x}/\mathfrak{m}_{n,x}^2)\to \lim_{\leftarrow}\mathfrak{m}_{n,x}^k/\mathfrak{m}_{n,x}^{k+1}\]
which is an epimorphism for all $k$. We say that $x$ is a {\em smooth
point} of $\mathbf{X}$ if and only if $\hat\varphi_k$ is an isomorphism for
all $k$. We say that $x$ is a {\em singular point} otherwise. The notion of smoothness of a point is independent of the choice of exhaustion $\{X_n\}_{n\geq 1}$ of $\mathbf{X}$. We say that $\mathbf{X}$ is {\em smooth} if every point $x\in\mathbf{X}$ is smooth.
As general references on smooth ind-varieties see \cite[Chapter 4]{K} and \cite{S}.

\begin{example}
\label{example1} {\rm (a)} Assume that every variety $X_n$ in the chain (\ref{rel1}) is an affine space, every image $\varphi_n(X_n)$ is an affine subspace of $X_{n+1}$, and $\lim\limits_{n\to\infty}\dim X_n=\infty$. Then, up to isomorphism,
$\mathbf{X}=\lim\limits_{\to}X_n$ is independent of the choice of $\{X_n,\varphi_n\}_{n\geq 1}$ with these properties. We write $\mathbf{X}=\mathbb{A}^\infty$ and call it the {\em infinite-dimensional affine space}.
For instance, $\mathbb{A}^\infty$ admits the exhaustion $\mathbb{A}^\infty=\bigcup_{n\geq 1}\mathbb{A}^n$ where $\mathbb{A}^n$ stands for the $n$-dimensional affine space. The infinite-dimensional affine space $\mathbb{A}^\infty$ is a smooth ind-variety. \\
{\rm (b)} If every variety $X_n$ in the chain (\ref{rel1}) is a projective space, every image $\varphi_n(X_n)$ is a projective subspace of $X_{n+1}$, and $\lim\limits_{n\to\infty}\dim X_n=\infty$, then $\mathbf{X}=\lim\limits_\to X_n$ is independent of the choice of $\{X_n,\varphi_n\}_{n\geq 1}$ with these properties.\ We write $\mathbf{X}=\mathbb{P}^\infty=\bigcup_{n\geq 1}\mathbb{P}^n$ and call $\mathbb{P}^\infty$ the {\em infinite-dimensional projective space}. The infinite-dimensional projective space $\mathbb{P}^\infty$ is also a smooth ind-variety.
\end{example}

A {\em cell decomposition} of an ind-variety $\mathbf{X}$ is a decomposition
$\mathbf{X}=\bigsqcup_{i\in I} \mathbf{X}_i$ into locally closed ind-subvarieties
$\mathbf{X}_i$, each being a finite-dimensional or infinite-dimensional affine space, and such that the
closure of each $\mathbf{X}_i$ in $\mathbf{X}$ is a union of some subsets $\mathbf{X}_j$ ($j\in I$).

\subsection{Ind-groups}

\label{section-2-2}
An {\em ind-group} is an ind-variety $\mathbf{G}$ endowed with a group structure such that the multiplication $\mathbf{G}\times\mathbf{G}\to\mathbf{G}$, $(g,h)\mapsto gh$, and the inversion $\mathbf{G}\to\mathbf{G}$, $g\mapsto g^{-1}$ are morphisms of ind-varieties. A {\em morphism of ind-groups} $\mathbf{f}:\mathbf{G}'\to\mathbf{G}$  is by definition a morphism of groups which is also a morphism of ind-varieties. A {\em closed ind-subgroup} is a subgroup $\mathbf{H}\subset\mathbf{G}$ which is also a closed ind-subvariety.

We only consider {\em locally linear ind-groups}, i.e., ind-groups admitting an exhaustion $\{G_n\}_{n\geq 1}$ by linear algebraic groups.
Moreover, we focus on the {\em classical ind-groups}
$GL(\infty)$, $O(\infty)$, $Sp(\infty)$, which are obtained as subgroups of the group $Aut(V)$ of linear automorphisms of a countable-dimensional vector space $V$:

\begin{itemize}
\item Let $E$ be a basis of $V$. Define $\mathbf{G}(E)$ as the subgroup of elements $g\in Aut(V)$ such that $g(e)=e$ for all but finitely many basis vectors $e\in E$. Given any filtration $E=\bigcup_{n\geq 1}E_n$ of the basis $E$ by finite subsets, we have \begin{equation}
\label{rel3}
\mathbf{G}(E)=\bigcup_{n\geq 1}G(E_n)
\end{equation}
where $G(E_n)$ stands for $GL(\langle E_n\rangle)$. Thus $\mathbf{G}(E)$ is a locally linear ind-group. We also write $\mathbf{G}(E)=GL(\infty)$.
\item Assume that the space $V$ is endowed with a nondegenerate symmetric or skew-symmetric bilinear form $\omega$.\ A basis $E$ of $V$ is called $\omega$-isotropic if it is equipped with an involution $i_E:E\to E$ with at most one fixed point, such that  $\omega(e,e')=0$ for any $e,e'\in E$ unless $e'=i_E(e)$. Given an $\omega$-isotropic basis $E$ of $V$, define $\mathbf{G}^\omega(E)$ as the subgroup of elements $g\in \mathbf{G}(E)$ which preserve the bilinear form $\omega$. If a filtration $E=\bigcup_{n\geq 1}E_n$ of the basis $E$ by $i_E$-stable finite subsets is fixed, we have
\begin{equation}
\label{rel4}
\mathbf{G}^\omega(E)=\bigcup_{n\geq 1}G^\omega(E_n)
\end{equation}
where $G^\omega(E_n)$ stands for the subgroup of elements $g\in G(E_n)$ preserving the restriction of $\omega$. Thereby $\mathbf{G}^\omega(E)$ has a natural structure of locally linear ind-group. We also write $\mathbf{G}^\omega(E)=Sp(\infty)$ when $\omega$ is symplectic, and $\mathbf{G}^\omega(E)=O(\infty)$ when $\omega$ is symmetric.
\end{itemize}

\begin{remark}
\label{remark-1-new}
{\rm (a)}
Note that the group $\mathbf{G}(E)=GL(\infty)$ depends on the choice of the basis $E$. For this reason, in what follows, we prefer the notation $\mathbf{G}(E)$ instead of $GL(\infty)$.
%We have $\mathbf{G}(E)=\mathbf{G}(L)$ for any basis $L$ such that $E\setminus E\cap L$ is a finite set or, more generally, for any basis $L$ such that, for all finite subsets $E'\subset E$ and $L'\subset L$, both sets $E\setminus E\cap \langle L\setminus L'\rangle$ and $L\setminus L\cap \langle E\setminus E'\rangle$ are finite. Note however that the isomorphism class of $\mathbf{G}(E)$ does not depend on $E$.

An alternative construction of $GL(\infty)$ is as follows. Note that the dual space $V^*$ is uncountable dimensional. Let $V_*\subset V^*$ be a countable-dimensional subspace such that the pairing $V_*\times V\to\mathbb{K}$ is nondegenerate. Then the group
\[
\begin{array}[t]{r}\mathbf{G}(V,V_*):=\{g\in Aut(V):g(V_*)=V_*\ \mbox{and there are finite-codimensional subspaces} \\ \mbox{of $V$ and $V_*$ fixed pointwise by $g$}\}
\end{array}
\]
is an ind-group isomorphic to $GL(\infty)$. Moreover, we have $\mathbf{G}(V,V_*)=\mathbf{G}(E)$ whenever $V_*$ is spanned by the dual family of $E$. \\
{\rm (b)} The form $\omega$ induces a countable-dimensional subspace $V_*:=\{\omega(v,\cdot):v\in V\}\subset V^*$ of the dual space.
Then the group
\[\mathbf{G}(V,\omega):=\{g\in \mathbf{G}(V,V_*):\mbox{$g$ preserves $\omega$}\}\]
is an ind-subgroup of $\mathbf{G}(V,V_*)$ isomorphic to $Sp(\infty)$ (if $\omega$ is symplectic) or $O(\infty)$ (if $\omega$ is symmetric).
The equality $\mathbf{G}(V,\omega)=\mathbf{G}^\omega(E)$ holds whenever $E$ is an $\omega$-isotropic basis. \\
{\rm (c)} If $\omega$ is symplectic, then the involution $i_E:E\to E$ has no fixed point; the basis $E$ is said to be {\em of type C} in this case. If $\omega$ is symmetric, then the involution $i_E:E\to E$ can have one fixed point, in which case the basis $E$ is said to be {\em of type B}; if $i_E$ has no fixed point,  the basis $E$ is said to be {\em of type D}. Bases of both types B and D exist in $V$ (see \cite[Lemma 4.2]{DP}).
\end{remark}

In the rest of the paper, we fix once and for all a basis $E$ of $V$ and a filtration $E=\bigcup_{n\geq 1}E_n$ by finite subsets. We assume that the basis $E$ is $\omega$-isotropic and that the subsets $E_n$ are $i_E$-stable
whenever the bilinear form $\omega$ is considered.

Moreover, if the form $\omega$ is symmetric, in view of Remark \ref{remark-1-new}\,{\rm (b)}--{\rm (c)} in what follows we assume that the basis $E$ is of type B and that every subset $E_n$ of the filtration contains the fixed point of the involution $i_E$. This convention ensures that the variety of isotropic flags of a given type of each finite-dimensional space $\langle E_n\rangle$ is connected and $G^\omega(E_n)$-homogeneous. Similarly, every $i_E$-stable finite subset of $E$ considered in the sequel is assumed to contain the fixed point of $i_E$.

By $\mathbf{G}$ we denote one of the ind-groups $\mathbf{G}(E)$, $\mathbf{G}^\omega(E)$.

Let $\mathbf{H}$ be the subgroup of elements $g\in \mathbf{G}$ which are diagonal in the basis $E$. Then $\mathbf{H}$ is a closed ind-subgroup of $\mathbf{G}$ called {\em splitting Cartan subgroup}.
A closed ind-subgroup $\mathbf{B}\subset\mathbf{G}$ which contains $\mathbf{H}$
is called {\em splitting Borel subgroup} if it is locally solvable (i.e., every algebraic subgroup $B\subset\mathbf{B}$ is solvable) and is maximal with this property. A\ closed ind-subgroup which contains such a  splitting Borel subgroup $\mathbf{B}$ is called {\em  splitting parabolic subgroup}.
Equivalently,
a closed ind-subgroup $\mathbf{P}$ of $\mathbf{G}$ containing $\mathbf{H}$ is a  splitting parabolic subgroup of $\mathbf{G}$ if and only if $\mathbf{P}\cap G_n$ is a parabolic subgroup of $G_n$ for all $n\geq 1$, where $\mathbf{G}=\bigcup_{n\geq 1}G_n$ is the natural exhaustion
of (\ref{rel3}) or (\ref{rel4}). The quotient $\mathbf{G}/\mathbf{P}=\bigcup_{n\geq 1}G_n/(\mathbf{P}\cap G_n)$ is a {\em locally projective} ind-variety (i.e., an ind-variety exhausted by projective varieties); note however that $\mathbf{G}/\mathbf{P}$ is in general not a {\em projective} ind-variety (i.e., is not isomorphic to a closed ind-subvariety of the infinite-dimensional projective space $\mathbb{P}^\infty$): see \cite[Proposition 7.2]{DP} and \cite[Proposition 15.1]{DPW}.

In \cite{DP} it is shown that the ind-variety $\mathbf{G}/\mathbf{P}$
can be interpreted as an ind-variety of certain generalized flags. This construction is reviewed in the following section.

\section{Ind-varieties of generalized flags}

\label{section-3}

In Section \ref{section-3-1}
we recall from \cite{DP1, DP} the notion of generalized flag and the correspondence between splitting parabolic subgroups $\mathbf{P}$ of $\mathbf{G}(E)$ and $E$-compatible generalized flags $\mathcal{F}$.  We also recall from \cite{DP} the construction of the ind-varieties $\mathbf{Fl}(\mathcal{F},E)$ of generalized flags and their correspondence with homogeneous ind-spaces of the form $\mathbf{G}(E)/\mathbf{P}$.

In Section \ref{section-3-2} we recall from \cite{DP1, DP} the notion of $\omega$-isotropic generalized flags and the construction of the ind-variety $\mathbf{Fl}(\mathcal{F},\omega,E)$ of $\omega$-isotropic generalized flags, as well as the correspondence with splitting parabolic subgroups of $\mathbf{G}^\omega(E)$ and the corresponding homogeneous ind-spaces.

For later use, some technical aspects of the construction of the ind-varieties $\mathbf{Fl}(\mathcal{F},E)$ and $\mathbf{Fl}(\mathcal{F},\omega,E)$ are emphasized in Section \ref{section-3-3}.

\subsection{Ind-variety of generalized flags}

\label{section-3-1}

By {\em chain} of subspaces of $V$ we mean a set of vector subspaces of $V$ which is totally ordered by inclusion.

\begin{definition}[\cite{DP1,DP}]
\label{definition-1}
A {\em generalized flag} is a chain $\mathcal{F}$ of subspaces of $V$ satisfying the following additional conditions:
\begin{itemize}
\item[{\rm (i)}] every $F\in\mathcal{F}$ has an immediate predecessor $F'$ in $\mathcal{F}$ or an immediate successor $F''$ in $\mathcal{F}$;
\item[{\rm (ii)}] for every nonzero vector $v\in V$, there is a pair $(F',F'')$ of consecutive elements of $\mathcal{F}$ such that $v\in F''\setminus F'$.\end{itemize}
\end{definition}

Let $A_\mathcal{F}$ denote the set of pairs $(F',F'')$ of consecutive subspaces $F',F''\in\mathcal{F}$. The set $A_\mathcal{F}$ is totally ordered by the inclusion of pairs.
Given a totally ordered set $(A,\preceq)$,
we denote by $\mathbf{Fl}_A(V)$ the set of generalized flags such that $(A_\mathcal{F},\subset)$ is isomorphic to $(A,\preceq)$.
Equivalently, $\mathbf{Fl}_A(V)$ is the set of generalized flags $\mathcal{F}$ which can be written in the form
\begin{equation}
\label{notation-Fa1}
\mathcal{F}=\{F'_\alpha,F''_\alpha:\alpha\in A\},
\end{equation}
where $F_\alpha',F''_\alpha$ are subspaces of $V$ such that
\begin{equation}
\label{notation-Fa}
\left\{
\begin{array}{l}
\mbox{$F'_\alpha\varsubsetneq F''_\alpha$ for all $\alpha\in A$;}\\[1mm]
\mbox{$F''_\alpha\subset F'_\beta$ whenever $\alpha\prec \beta$ (possibly $F''_\alpha=F'_\beta$);}\\[1mm]
\mbox{$V\setminus \{0\}=\bigsqcup_{\alpha\in A}F''_\alpha\setminus F'_\alpha$.}
\end{array}\right.
\end{equation}

\begin{definition}
\label{definition-3}
Let $L$ be a basis of the space $V$. A generalized flag $\mathcal{F}=\{F'_\alpha,F''_\alpha:\alpha\in A\}\in\mathbf{Fl}_A(V)$ is said to be {\it compatible with $L$} if there is a (necessarily surjective) map $\sigma:L\to A$ such that
\[F'_\alpha=\langle e\in L:\sigma(e)\prec\alpha\rangle,\quad F''_\alpha=\langle e\in L:\sigma(e)\preceq\alpha\rangle\]
for all $\alpha\in A$.
\end{definition}

Every generalized flag admits a compatible basis (see \cite[Proposition 4.1]{DP}).
The group $\mathbf{G}(E)$  acts on generalized flags in a natural way.
Let $\mathbf{H}(E)\subset\mathbf{G}(E)$ be the splitting Cartan subgroup formed by elements diagonal in $E$.
It is easy to see that a generalized flag $\mathcal{F}$ is compatible with $E$ if and only if it is preserved by $\mathbf{H}(E)$.
We denote by $\mathbf{P}_{\mathcal{F}}\subset\mathbf{G}(E)$ the subgroup of all elements which preserve $\mathcal{F}$.

\begin{proposition}[\cite{DP1,DP}]
\label{proposition-1}
{\rm (a)} If $\mathcal{F}$ is a generalized flag compatible with $E$, then $\mathbf{P}_{\mathcal{F}}$ is a splitting parabolic subgroup of $\mathbf{G}(E)$ containing $\mathbf{H}(E)$.  \\
{\rm (b)} The map $\mathcal{F}\mapsto\mathbf{P}_{\mathcal{F}}$ is a bijection between generalized flags compatible with $E$ and splitting parabolic subgroups of $\mathbf{G}(E)$ containing $\mathbf{H}(E)$. \\
{\rm (c)} A splitting parabolic subgroup $\mathbf{P}_\mathcal{F}$ is a splitting Borel subgroup if and only if the generalized flag $\mathcal{F}$ is maximal
(i.e., $\dim F''/F'=1$ for every pair $(F',F'')$ of consecutive elements of $\mathcal{F}$). \end{proposition}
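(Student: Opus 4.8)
The three assertions are logically linked, so the plan is to establish them together by reducing everything to the finite-dimensional theory through the exhaustion $\mathbf{G}(E)=\bigcup_{n\geq 1}G(E_n)$. First I would verify part (a). Since $\mathcal{F}$ is compatible with $E$, Definition~\ref{definition-3} gives a surjection $\sigma:E\to A$ with $F'_\alpha=\langle e:\sigma(e)\prec\alpha\rangle$ and $F''_\alpha=\langle e:\sigma(e)\preceq\alpha\rangle$. The key observation is that each finite subspace $\langle E_n\rangle$ inherits a genuine (finite) flag: setting $F^{(n)}:=F\cap\langle E_n\rangle$ for $F\in\mathcal{F}$ yields a chain in $\langle E_n\rangle$ whose distinct members form an ordinary partial flag, compatible with the finite basis $E_n$. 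Because $\mathbf{P}_\mathcal{F}\cap G(E_n)$ is exactly the stabilizer in $G(E_n)$ of this finite flag, it is a parabolic subgroup of $G(E_n)$ containing the diagonal torus. By the characterization recalled in Section~\ref{section-2-2} (that $\mathbf{P}$ is splitting parabolic iff $\mathbf{P}\cap G_n$ is parabolic for all $n$), this shows $\mathbf{P}_\mathcal{F}$ is a splitting parabolic subgroup containing $\mathbf{H}(E)$.

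Next I would prove the bijectivity in part (b). Injectivity is essentially formal: if $\mathbf{P}_\mathcal{F}=\mathbf{P}_{\mathcal{G}}$ for two $E$-compatible generalized flags, then both are recovered as the unique $\mathbf{H}(E)$-stable generalized flag fixed by the common parabolic, using condition~(ii) of Definition~\ref{definition-1} to pin down each subspace as a span of basis vectors; since the stabilizer determines which coordinate subspaces are preserved, one reads off $\mathcal{F}=\mathcal{G}$. For surjectivity, given a splitting parabolic $\mathbf{P}\supset\mathbf{H}(E)$, each $\mathbf{P}\cap G(E_n)$ is a parabolic subgroup of $G(E_n)$ containing the torus, hence stabilizes a unique partial flag in $\langle E_n\rangle$ built from coordinate subspaces. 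The compatibility condition $\varphi_n(X_n)\subset X_{n+1}$ translates here into the fact that these finite flags are nested as $n$ grows, so their union assembles into a single chain of $E$-compatible subspaces of $V$; checking that this chain satisfies conditions~(i)--(ii) of Definition~\ref{definition-1} produces a generalized flag $\mathcal{F}$ with $\mathbf{P}_\mathcal{F}=\mathbf{P}$.

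Finally, part (c) follows by tracking when the finite flags are complete. A splitting parabolic $\mathbf{P}_\mathcal{F}$ is a splitting Borel subgroup precisely when each $\mathbf{P}_\mathcal{F}\cap G(E_n)$ is a Borel subgroup of $G(E_n)$, i.e. the stabilizer of a full flag in $\langle E_n\rangle$; this forces every consecutive quotient of the finite flag to be one-dimensional, which by the correspondence of (a) is equivalent to $\dim F''/F'=1$ for every consecutive pair $(F',F'')$ of $\mathcal{F}$, i.e. to maximality of $\mathcal{F}$. The main obstacle I anticipate is the surjectivity argument in (b): one must check that the finite parabolic flags obtained at each level glue coherently into a \emph{generalized} flag satisfying the exhaustiveness condition~(ii), which is where the generalized (non-$\mathbb{Z}$-ordered) structure genuinely matters and where the countable-dimensional bookkeeping of the index set $A$ against the basis $E$ requires care.
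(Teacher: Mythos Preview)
The paper does not prove Proposition~\ref{proposition-1}; it is quoted from \cite{DP1,DP} without argument, so there is no proof in the paper to compare against. Your outline via the exhaustion $\mathbf{G}(E)=\bigcup_n G(E_n)$ is the natural strategy and is essentially how the result is established in those references. Parts (a) and (c) are fine as you describe them.

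For part (b), your injectivity argument has a gap. The phrase ``the unique $\mathbf{H}(E)$-stable generalized flag fixed by the common parabolic'' is not correct: any sub-generalized-flag of $\mathcal{F}$ is also $\mathbf{P}_\mathcal{F}$-stable, and conversely there can be $\mathbf{P}_\mathcal{F}$-stable coordinate subspaces that do \emph{not} belong to $\mathcal{F}$ (for instance, if $A=\mathbb{Q}$ and $\sigma$ is a bijection, the subspace $\langle e:\sigma(e)<\sqrt{2}\rangle$ is $\mathbf{P}_\mathcal{F}$-stable but lies in no $F'_\alpha$ or $F''_\alpha$). The clean way to recover $\mathcal{F}$ from $\mathbf{P}_\mathcal{F}$ is via the partial order on $E$ described in Remark~\ref{remark-4}: the root subgroup $1+\mathbb{K}e_{e',e}$ lies in $\mathbf{P}_\mathcal{F}$ if and only if $\sigma(e')\preceq_A\sigma(e)$, so $\mathbf{P}_\mathcal{F}$ determines the preorder $e'\preceq_\mathbf{P} e$ on $E$, hence (after quotienting by the associated equivalence) the ordered set $A$ and the map $\sigma$, hence $\mathcal{F}$. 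This same device also streamlines surjectivity: given $\mathbf{P}$, read off $\preceq_\mathbf{P}$ from its root subgroups, check it satisfies (\ref{10-new}), and build $\mathcal{F}$ directly from the quotient, avoiding the gluing of finite flags you flag as the main obstacle.
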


\begin{remark}
Proposition \ref{proposition-1}\,{\rm (c)} can be interpreted as a version of Lie's theorem for the action of any splitting Borel subgroup on the space $V$. A general version of Lie's theorem has been proved by J.~Hennig in \cite{Hennig}.
\end{remark}

\begin{definition}[\cite{DP}]
\label{definition-3-new}
{\rm (a)}
We say that a generalized flag $\mathcal{F}$ is {\it weakly compatible with $E$} if $\mathcal{F}$ is compatible with a basis $L$ of $V$ such that $E\setminus E\cap L$ is a finite set (equivalently $\mathrm{codim}_V\langle E\cap L\rangle$ is finite). \\
{\rm (b)}
Two generalized flags $\mathcal{F},\mathcal{G}$ are said to be {\it $E$-commensurable} if both $\mathcal{F}$ and $\mathcal{G}$ are weakly compatible with $E$, and
there are an isomorphism of ordered sets
$\phi:\mathcal{F}\to\mathcal{G}$ and a finite-dimensional subspace $U\subset V$ such that
\begin{itemize}
\item[{\rm (i)}] $\phi(F)+U=F+U$ for all $F\in\mathcal{F}$,
\item[{\rm (ii)}] $\dim \phi(F)\cap U=\dim F\cap U$ for all $F\in\mathcal{F}$.
\end{itemize}
\end{definition}

\begin{remark}
{\rm (a)} Clearly, if $\mathcal{F},\mathcal{G}$ are $E$-commensurable with respect to a finite-dimensional subspace $U$, then $\mathcal{F},\mathcal{G}$ are $E$-commensurable with respect to any finite-dimensional subspace $U'\subset V$ such that $U'\supset U$. \\
{\rm (b)} $E$-commensurability is an equivalence relation on the set of generalized flags weakly compatible with $E$.
\end{remark}

Let $\mathcal{F}$ be a generalized flag compatible with $E$.
We denote by $\mathbf{Fl}(\mathcal{F},E)$ the set of all generalized flags which are $E$-commensurable with $\mathcal{F}$.

\begin{proposition}[\cite{DP}]
\label{proposition-2}
The set $\mathbf{Fl}(\mathcal{F},E)$
is endowed with a natural structure of ind-variety.
Moreover, this ind-variety is $\mathbf{G}(E)$-homogeneous and the
map $g\mapsto g\mathcal{F}$ induces
an isomorphism of ind-varieties $\mathbf{G}(E)/\mathbf{P}_\mathcal{F}\cong \mathbf{Fl}(\mathcal{F},E)$.

\end{proposition}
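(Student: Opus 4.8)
The plan is to build compatible exhaustions of the two sides and reduce everything to the classical fact that a finite-dimensional partial flag variety is a homogeneous space for the general linear group. Write $V_n:=\langle E_n\rangle$ and let $\sigma\colon E\to A$ be the surjection recording the compatibility of $\mathcal F$ with $E$ (Definition \ref{definition-3}). To each $n$ I attach the finite flag $\mathcal F_n:=\{F\cap V_n:F\in\mathcal F\}$ in $V_n$ and the finite-dimensional partial flag variety $X_n$ of all flags in $V_n$ of the same type as $\mathcal F_n$; the relevant jumps are indexed by the finite set $A_n:=\sigma(E_n)$. Because $\mathcal F$ is compatible with $E$, each $F\in\mathcal F$ splits as $F=(F\cap V_n)\oplus\langle e\in E\setminus E_n:e\in F\rangle$, and it is this splitting that lets one glue the finite data inside $V_n$ to the fixed behaviour of $\mathcal F$ outside $V_n$.

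First I would define the gluing map $\iota_n\colon X_n\to\mathbf{Fl}(\mathcal F,E)$ sending a flag $(D_\alpha)_{\alpha\in A_n}$ in $V_n$ to the generalized flag whose term at $\alpha$ is $D_\alpha\oplus\langle e\in E\setminus E_n:\sigma(e)\preceq\alpha\rangle$, interpolated by the fixed subspaces of $\mathcal F$ at the indices lying outside $A_n$. A direct check shows that $\iota_n(\mathcal D)$ is again a generalized flag $E$-commensurable with $\mathcal F$, with $U=V_n$ as the witnessing subspace in Definition \ref{definition-3-new}, so $\iota_n$ is well defined; moreover there is a closed immersion $j_n\colon X_n\hookrightarrow X_{n+1}$ of flag varieties with $\iota_{n+1}\circ j_n=\iota_n$. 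This exhibits $\mathbf{Fl}(\mathcal F,E)=\bigcup_n\iota_n(X_n)$ as a direct limit of closed immersions of projective varieties, which is the asserted ind-variety structure, provided one knows that the maps $\iota_n$ are jointly surjective.

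Next I would match this exhaustion with the natural one $\mathbf{G}(E)/\mathbf{P}_\mathcal F=\bigcup_n G(E_n)/(\mathbf{P}_\mathcal F\cap G(E_n))$. Since every $g\in G(E_n)=GL(V_n)$ fixes $E\setminus E_n$ pointwise, the splitting above shows that $g\mathcal F$ agrees with $\mathcal F$ outside $V_n$ while its intersection with $V_n$ is an arbitrary flag of the type of $\mathcal F_n$; hence the orbit map $g\mapsto g\mathcal F$ sends $G(E_n)$ onto $\iota_n(X_n)$, with stabilizer exactly $\mathrm{Stab}_{GL(V_n)}(\mathcal F_n)=\mathbf{P}_\mathcal F\cap G(E_n)$. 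By the finite-dimensional theory the induced morphism $G(E_n)/(\mathbf{P}_\mathcal F\cap G(E_n))\to X_n$ is then an isomorphism of varieties. As these isomorphisms are compatible with $j_n$ and with the inclusions on the group side, passing to the limit simultaneously yields the transitivity of the $\mathbf{G}(E)$-action (hence the homogeneity) and the desired isomorphism of ind-varieties $\mathbf{G}(E)/\mathbf{P}_\mathcal F\cong\mathbf{Fl}(\mathcal F,E)$.

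I expect the main obstacle to be the surjectivity of the family $\{\iota_n\}$, i.e. the statement that every $E$-commensurable generalized flag is captured at some finite level. Here one must exploit the full definition of $E$-commensurability: starting from an order-isomorphism $\phi\colon\mathcal F\to\mathcal G$ and a finite-dimensional $U$ satisfying conditions (i)--(ii) of Definition \ref{definition-3-new}, and using that these conditions persist after replacing $U$ by any larger finite-dimensional subspace, I would enlarge $U$ to some $V_n$ and translate (i)--(ii) into the concrete assertion that $\mathcal G$ and $\mathcal F$ have identical terms outside $V_n$ and that $\phi$ is compatible with $\sigma$ there; this forces $\mathcal G=\iota_n(\mathcal G\cap V_n)$. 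The remaining verifications --- that the orbit map and its inverse are morphisms of ind-varieties rather than mere bijections --- are routine, since they are levelwise isomorphisms of compatible exhaustions.
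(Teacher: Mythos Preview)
The paper does not prove this proposition; it is quoted from \cite{DP}, and the ind-variety structure is merely recalled in Section~\ref{section-3-3} (Proposition~\ref{proposition-3-3-3}). Your construction is exactly the one outlined there: your $X_n$ is the paper's $\mathrm{Fl}(\mathcal{F},E_n)$, your $j_n$ is the paper's $\iota_{E_n,E_{n+1}}$, and your levelwise identification $G(E_n)/(\mathbf{P}_\mathcal{F}\cap G(E_n))\cong \mathrm{Fl}(\mathcal{F},E_n)$ is the standard finite-dimensional fact on which the whole statement rests. So your approach is correct and coincides with what the paper (following \cite{DP}) does.

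One minor point of presentation: in the paper's setup the flags in $\mathrm{Fl}(\mathcal{F},I)$ are indexed by all of $A$, not just by the finite image $\sigma(I)$, with the convention that $M'_\alpha=M''_\alpha$ when $\alpha\notin\sigma(I)$; this makes the embedding $\iota_{I,J}$ cleaner to write down and avoids your phrase ``interpolated by the fixed subspaces of $\mathcal{F}$ at the indices lying outside $A_n$'', which is correct but a little vague. Your identification of the surjectivity of $\{\iota_n\}$ as the only nontrivial step, and your plan to enlarge the witnessing subspace $U$ to some $V_n$, is exactly right.
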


\subsection{Ind-variety of isotropic generalized flags}

\label{section-3-2}

In this section we assume that the space $V$ is endowed with a nondegenerate symmetric or skew-symmetric bilinear form $\omega$.
We write $U^\perp$ for the orthogonal subspace of the subspace $U\subset V$ with respect to $\omega$.
We assume that the basis $E$ is $\omega$-isotropic, i.e., endowed with an involution $i_E:E\to E$ with at most one fixed point and such that, for any $e,e'\in E$, $\omega(e,e')=0$ unless $e'=i_E(e)$.

\begin{definition}[\cite{DP1,DP}]
\label{definition-4}
A generalized flag $\mathcal{F}$ is said to be {\em $\omega$-isotropic} if  $F^\perp\in\mathcal{F}$ whenever $F\in\mathcal{F}$, and if the map $F\mapsto F^\perp$ is an involution of $\mathcal{F}$.
\end{definition}

For $\mathcal{F}$ as in Definition \ref{definition-4},
the involution $F\mapsto F^\perp$ is an anti-automorphism of the ordered set $(\mathcal{F},\subset)$, i.e., it reverses the inclusion relation. Moreover, it induces an involutive anti-automorphism $(F'_\alpha,F''_\alpha)\mapsto ((F''_\alpha)^\perp,(F'_\alpha)^\perp)$ of the set $(A_\mathcal{F},\subset)$ of pairs of consecutive subspaces of $\mathcal{F}$.
Given a totally ordered set $(A,\preceq,i_A)$ equipped with an involutive anti-automorphism $i_A:A\to A$, we denote by $\mathbf{Fl}_A^\omega(V)$ the set of generalized flags $\mathcal{F}\in\mathbf{Fl}_A(V)$ (see (\ref{notation-Fa1})--(\ref{notation-Fa}))
which are $\omega$-isotropic and satisfy the condition
\begin{equation}
\label{notation-Fa3}
((F''_\alpha)^\perp,(F'_\alpha)^\perp)=(F'_{i_A(\alpha)},F''_{i_A(\alpha)})\ \mbox{ for all $\alpha\in A$}.
\end{equation}

\begin{remark}
Note that the set $A$ decomposes as
\[A=A_\ell\sqcup A_0\sqcup A_r\]
where $A_\ell=\{\alpha\in A:\alpha\prec i_A(\alpha)\}$,
$A_0=\{\alpha\in A:\alpha= i_A(\alpha)\}$ (formed by at most one element), $A_r=\{\alpha\in A:\alpha\succ i_A(\alpha)\}$,
and the map $i_A$ restricts to bijections $A_\ell\to A_r$ and $A_r\to A_\ell$.

Given any $\mathcal{F}\in\mathbf{Fl}_A^\omega(V)$, we set
 $\mathcal{T}'=\bigcup_{\alpha\in A_\ell} F''_\alpha$
and $\mathcal{T}''=\bigcap_{\alpha\in A_r} F'_\alpha$.
Clearly, $\mathcal{T}'\subset \mathcal{T}''$, moreover it is easy to see that $(\mathcal{T}')^\perp=\mathcal{T}''$. We have either $\mathcal{T}'=\mathcal{T}''$ or $\mathcal{T}'\varsubsetneq\mathcal{T}''$. In the former case, the anti-automorphism $i_A$ has no fixed point, hence $A=A_\ell\sqcup A_r$. In the latter case, the subspaces $\mathcal{T}',\mathcal{T}''$ necessarily belong to $\mathcal{F}$, moreover we have $(\mathcal{T}',\mathcal{T}'')=(F'_{\alpha_0},F''_{\alpha_0})$ where $\alpha_0\in A$ is the unique fixed point of $i_A$; thus $A=A_\ell\sqcup\{\alpha_0\}\sqcup A_r$ in this case.
\end{remark}

The following lemma shows that the notions of compatibility and weak-compatibility with a basis (Definitions \ref{definition-3}--\ref{definition-3-new}) translate in a natural way to the context of $\omega$-isotropic generalized flags and bases.

\begin{lemma}
\label{lemma-1-new}
Let $\mathcal{F}\in\mathbf{Fl}_A^\omega(V)$, with $(A,\preceq,i_A)$ as above. \\
{\rm (a)}
Let $L$ be an $\omega$-isotropic basis with corresponding involution $i_L:L\to L$.
Assume that $\mathcal{F}$ is compatible with $L$ in the sense of Definition \ref{definition-3}, via a surjective map $\sigma:L\to A$.
Then the map $\sigma$ satisfies $\sigma\circ i_L=i_A\circ \sigma$. \\
{\rm (b)}
Assume that $\mathcal{F}$ is weakly compatible with $E$. Then there is an $\omega$-isotropic basis $L$ such that the set $E\setminus E\cap L$ is finite and $\mathcal{F}$ is compatible with $L$.
\end{lemma}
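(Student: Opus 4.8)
For part (a), the plan is to reduce everything to the behaviour of the orthogonal complement on coordinate subspaces of the isotropic basis $L$. The key observation is that for any subset $S\subseteq L$ one has $\langle S\rangle^\perp=\langle L\setminus i_L(S)\rangle$: writing $v=\sum_{e\in L}c_e e$, the condition $\omega(v,s)=0$ for $s\in S$ reduces, by $\omega$-isotropy of $L$, to $c_{i_L(s)}\,\omega(i_L(s),s)=0$, hence to $c_{i_L(s)}=0$ since $\omega(i_L(s),s)\neq 0$. Applying this to $S_\alpha:=\{e\in L:\sigma(e)\prec\alpha\}$, for which $F'_\alpha=\langle S_\alpha\rangle$, I obtain $(F'_\alpha)^\perp=\langle e\in L:\sigma(i_L(e))\succeq\alpha\rangle$. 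On the other hand, condition (\ref{notation-Fa3}) gives $(F'_\alpha)^\perp=F''_{i_A(\alpha)}$, and since $i_A$ is an involutive anti-automorphism of $(A,\preceq)$ one has $F''_{i_A(\alpha)}=\langle e\in L:i_A(\sigma(e))\succeq\alpha\rangle$. Two coordinate subspaces spanned by subsets of $L$ are equal only if these subsets coincide, so for every $\alpha\in A$ I get the equality $\{e:\sigma(i_L(e))\succeq\alpha\}=\{e:i_A(\sigma(e))\succeq\alpha\}$. As the up-sets $\{\succeq\alpha\}$ separate the points of the totally ordered set $A$, this forces $\sigma(i_L(e))=i_A(\sigma(e))$ for all $e\in L$, i.e. $\sigma\circ i_L=i_A\circ\sigma$.

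For part (b), the plan is to confine the finitely many ``anomalous'' basis vectors to a finite-dimensional nondegenerate subspace, keeping $E$ everywhere else. Let $L_0$ be a basis compatible with $\mathcal{F}$ with $E\setminus E\cap L_0$ finite; then $N:=E\setminus L_0$ and $R_0:=L_0\setminus E$ are both finite. I would choose a finite $i_E$-stable subset $E_k\subseteq E$ containing $N$, containing the fixed point of $i_E$ (by the type B convention), and large enough that $R_0\subseteq W:=\langle E_k\rangle$. Then $W$ is $\omega$-nondegenerate and $V=W\perp W^\perp$ with $W^\perp=\langle E\setminus E_k\rangle$ spanned by the $\omega$-isotropic, $i_E$-stable family $E\setminus E_k\subseteq E$. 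Writing $L_0=M'\sqcup(E\setminus E_k)$ with $M':=(E_k\setminus N)\sqcup R_0\subseteq W$, compatibility of $\mathcal{F}$ with $L_0$ yields, for each $F\in\mathcal{F}$, the orthogonal splitting $F=(F\cap W)\oplus(F\cap W^\perp)$ with $F\cap W^\perp=\langle(E\setminus E_k)\cap F\rangle$.

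It then remains to replace $M'$ by an isotropic basis of $W$ adapted to the flag. From the orthogonal splitting one gets $F^\perp\cap W=(F\cap W)^{\perp_W}$, so combined with the $\omega$-isotropy of $\mathcal{F}$ the trace chain $\{F\cap W:F\in\mathcal{F}\}$ is an $\omega|_W$-isotropic flag in the finite-dimensional space $W$. By the finite-dimensional theory (existence of an isotropic basis diagonalizing a maximal torus of the parabolic stabilizing an isotropic flag), this flag admits a compatible $\omega|_W$-isotropic basis $L_W$ of $W$, which I may take of type B so that it carries the fixed point. Setting $L:=L_W\cup(E\setminus E_k)$, I would check that $L$ is an $\omega$-isotropic basis with involution $i_{L_W}$ on $L_W$ and $i_E$ on $E\setminus E_k$, that $E\setminus E\cap L\subseteq E_k$ is finite, and that $\mathcal{F}$ is compatible with $L$ in the sense of Definition \ref{definition-3}, since each $F=(F\cap W)\oplus(F\cap W^\perp)$ is spanned by the basis vectors of $L_W\cup(E\setminus E_k)$ it contains.

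The main obstacle is part (b): the crux is to choose the finite-dimensional subspace $W$ so that it simultaneously absorbs the anomalies of both $E$ and $L_0$ while remaining $\omega$-nondegenerate, and then to verify that the trace of $\mathcal{F}$ on $W$ is genuinely $\omega$-isotropic, so that the finite-dimensional existence of an adapted isotropic basis applies. Part (a), by contrast, is essentially a formal computation once the orthogonal-complement formula $\langle S\rangle^\perp=\langle L\setminus i_L(S)\rangle$ is in hand.
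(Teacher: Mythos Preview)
Your proof is correct and, for part (b), follows essentially the same strategy as the paper: confine the finitely many anomalous vectors to a finite-dimensional $\omega$-nondegenerate subspace $W=\langle E_k\rangle$, observe that the trace of $\mathcal{F}$ on $W$ is an isotropic flag, and invoke the finite-dimensional case. Your version is in fact slightly more careful than the paper's, since you explicitly require $R_0=L_0\setminus E\subseteq W$, which is precisely what is needed to ensure the splitting $F=(F\cap W)\oplus(F\cap W^\perp)$ for every $F\in\mathcal{F}$.

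For part (a), your argument via the global formula $\langle S\rangle^\perp=\langle L\setminus i_L(S)\rangle$ and comparison of up-sets is correct but more elaborate than necessary. The paper argues pointwise: for each $e\in L$ one has $e\in F''_{\sigma(e)}\setminus F'_{\sigma(e)}$, hence $i_L(e)\in(F'_{\sigma(e)})^\perp\setminus(F''_{\sigma(e)})^\perp=F''_{i_A(\sigma(e))}\setminus F'_{i_A(\sigma(e))}$, which immediately gives $\sigma(i_L(e))=i_A(\sigma(e))$ by the defining property (ii) of a generalized flag. This avoids the subspace-level bookkeeping entirely, but both routes reach the same conclusion.
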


\begin{proof}
{\rm (a)} For every $e\in L$, we have $e\in F''_{\sigma(e)}\setminus F'_{\sigma(e)}$. Then $i_L(e)\in (F'_{\sigma(e)})^\perp\setminus (F''_{\sigma(e)})^\perp$. The definition of $i_A$ yields $\sigma(i_L(e))=i_A(\sigma(e))$. \\
{\rm (b)}
Let $L$ be a basis of $V$ such that $E\setminus E\cap L$ is finite and $\mathcal{F}$ is compatible with $L$.
Take a subset $E'\subset E$ stable by the involution $i_E$, such that $i_E$ has no fixed point in $E'$, $E\setminus E'$ is finite, and $E'\subset E\cap L$. Then $V'':=\langle E\setminus E'\rangle$ is a finite-dimensional space and the restriction of $\omega$ to $V''$ is nondegenerate. The intersections $\mathcal{F}|_{V''}:=\{F\cap V'':F\in\mathcal{F}\}$ form an isotropic flag of $V''$.\ Since $V''$ is finite dimensional, it is routine to find an $\omega$-isotropic basis $E''$ of $V''$ such that $\mathcal{F}|_{V''}$ is compatible with $E''$. Then $E'\cup E''$ is an $\omega$-isotropic basis of $V$, and $\mathcal{F}$ is compatible with $E'\cup E''$.
\end{proof}

The group $\mathbf{G}^\omega(E)$ acts in a natural way on $\omega$-isotropic generalized flags.
Let $\mathbf{H}^\omega(E)\subset\mathbf{G}^\omega(E)$ be the splitting Cartan subgroup formed by elements diagonal in $E$.
An $\omega$-isotropic generalized flag is compatible with the basis $E$ if and only if it is preserved by $\mathbf{H}^\omega(E)$.
Given an $\omega$-isotropic generalized flag $\mathcal{F}$ compatible with $E$,
we denote by $\mathbf{P}_\mathcal{F}^\omega\subset\mathbf{G}^\omega(E)$ the subgroup of all elements which preserve $\mathcal{F}$.
Moreover, we denote by $\mathbf{Fl}(\mathcal{F},\omega,E)$ the set of all $\omega$-isotropic generalized flags which are $E$-commensurable with $\mathcal{F}$.

\begin{proposition}[\cite{DP1,DP}]
\label{proposition-3-new}
{\rm (a)} The map $\mathcal{F}\mapsto
\mathbf{P}_\mathcal{F}^\omega$ is a bijection between $\omega$-isotropic generalized flags compatible with $E$ and splitting parabolic subgroups of $\mathbf{G}^\omega(E)$ containing $\mathbf{H}^\omega(E)$. \\
{\rm (b)} A splitting parabolic subgroup $\mathbf{P}_\mathcal{F}^\omega$ is a splitting Borel subgroup of $\mathbf{G}^\omega(E)$ if and only if the generalized flag $\mathcal{F}$ is maximal. \\
{\rm (c)} The set $\mathbf{Fl}(\mathcal{F},\omega,E)$ is endowed with a natural structure of ind-variety. This ind-variety is $\mathbf{G}^\omega(E)$-homogeneous and the map $g\mapsto g\mathcal{F}$ induces an isomorphism of ind-varieties $\mathbf{G}^\omega(E)/\mathbf{P}^\omega_\mathcal{F}\cong \mathbf{Fl}(\mathcal{F},\omega,E)$.
\end{proposition}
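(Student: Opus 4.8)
The plan is to reduce all three statements to the already-established linear case (Propositions \ref{proposition-1} and \ref{proposition-2}) together with the classical theory of parabolic subgroups of the finite-dimensional groups $G^\omega(E_n)$. The bridge is the identity $\mathbf{P}^\omega_\mathcal{F}=\mathbf{P}_\mathcal{F}\cap\mathbf{G}^\omega(E)$, valid for every $\omega$-isotropic generalized flag $\mathcal{F}$, combined with the observation that an element preserving $\omega$ also stabilizes $F\mapsto F^\perp$, so that a chain of invariant subspaces of such a group is automatically stable under $\perp$. Throughout I would work with the exhaustion $\mathbf{G}^\omega(E)=\bigcup_n G^\omega(E_n)$ and use Lemma \ref{lemma-1-new} to pass between $\omega$-isotropic bases and $\omega$-isotropic flags.

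For part (a), well-definedness is read off at finite level: for each $n$ the intersection $\mathbf{P}^\omega_\mathcal{F}\cap G^\omega(E_n)$ is the stabilizer in $G^\omega(E_n)$ of the isotropic flag $\{F\cap\langle E_n\rangle:F\in\mathcal{F}\}$, hence a parabolic subgroup containing $H^\omega(E_n)$; since this holds for all $n$, $\mathbf{P}^\omega_\mathcal{F}$ is a splitting parabolic subgroup containing $\mathbf{H}^\omega(E)$. Injectivity follows from the finite-dimensional correspondence: if $\mathbf{P}^\omega_\mathcal{F}=\mathbf{P}^\omega_\mathcal{G}$, then at each level the parabolic $\mathbf{P}^\omega_\mathcal{F}\cap G^\omega(E_n)$ stabilizes both restricted flags, and a parabolic subgroup of $G^\omega(E_n)$ determines its isotropic flag, so the restrictions agree and in the limit $\mathcal{F}=\mathcal{G}$. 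For surjectivity, given a splitting parabolic $\mathbf{Q}\supset\mathbf{H}^\omega(E)$ I would let $\mathcal{F}_n$ be the isotropic flag of $\langle E_n\rangle$ stabilized by the parabolic $\mathbf{Q}\cap G^\omega(E_n)$; these restrict compatibly, assemble into an $\omega$-isotropic generalized flag $\mathcal{F}$ compatible with $E$, and satisfy $\mathbf{P}^\omega_\mathcal{F}=\mathbf{Q}$ again by the finite-level equality.

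Part (b) I would also settle level by level. A splitting parabolic $\mathbf{P}^\omega_\mathcal{F}$ is Borel exactly when it is maximal locally solvable, which holds iff $\mathbf{P}^\omega_\mathcal{F}\cap G^\omega(E_n)$ is a Borel subgroup of $G^\omega(E_n)$ for all large $n$; on the finite level this is equivalent to $\{F\cap\langle E_n\rangle\}$ being a full isotropic flag, i.e., to maximality of $\mathcal{F}$. Here the standing convention that $E$ is of type B and that every $E_n$ contains the fixed point of $i_E$ is essential, since it guarantees that the variety of full isotropic flags of $\langle E_n\rangle$ is connected and $G^\omega(E_n)$-homogeneous, so that a full isotropic flag is stabilized by a Borel subgroup and not by a strictly larger one.

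For part (c), the ind-variety structure on $\mathbf{Fl}(\mathcal{F},\omega,E)$ comes from exhausting it by the finite-dimensional isotropic flag varieties attached to $\{F\cap\langle E_n\rangle\}$ in $\langle E_n\rangle$, with transition maps the closed embeddings induced by adjoining the vectors of $E_{n+1}\setminus E_n$; that these are closed immersions and that the limit is independent of choices is the same verification as in \cite{DP} for the linear case. Homogeneity reduces to Witt's extension theorem: at each finite level $G^\omega(E_n)$ acts transitively on isotropic flags of a fixed combinatorial type, so $\mathbf{G}^\omega(E)$ acts transitively on $\mathbf{Fl}(\mathcal{F},\omega,E)$, and the orbit map $g\mapsto g\mathcal{F}$ then yields the isomorphism $\mathbf{G}^\omega(E)/\mathbf{P}^\omega_\mathcal{F}\cong\mathbf{Fl}(\mathcal{F},\omega,E)$ by the argument of Proposition \ref{proposition-2}, checked as an isomorphism of ind-varieties because it is one at every finite level. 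The step I expect to be the main obstacle is precisely this transitivity: one must verify that $\omega$-isotropy together with $E$-commensurability pins down a single $\mathbf{G}^\omega(E)$-orbit, which requires matching not only the order type of the flag but also the dimension data of the middle subspaces $\mathcal{T}'\subset\mathcal{T}''$ described in the remark following (\ref{notation-Fa3}), and keeping the type B versus D data fixed along the exhaustion.
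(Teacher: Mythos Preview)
The paper does not prove Proposition~\ref{proposition-3-new}; it is stated as a result from \cite{DP1,DP} and no argument is given in the present text. So there is no ``paper's own proof'' to compare your proposal against.

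That said, your sketch is a reasonable outline of the argument one finds in \cite{DP1,DP}: reduce to the finite-dimensional groups $G^\omega(E_n)$ via the exhaustion, use the classical parabolic/isotropic-flag correspondence at each level, and assemble the results. Your identification of the delicate point---transitivity of $\mathbf{G}^\omega(E)$ on $\mathbf{Fl}(\mathcal{F},\omega,E)$, which requires controlling the type data and the middle pair $\mathcal{T}'\subset\mathcal{T}''$ across the exhaustion---is accurate, and this is indeed where the type~B convention (and Lemma~\ref{lemma-1-new}) are used. One small caution: in part~(a), the claim that the finite flags $\mathcal{F}_n$ ``assemble into an $\omega$-isotropic generalized flag'' needs the verification that the resulting chain satisfies conditions (i)--(ii) of Definition~\ref{definition-1}, not merely that it is a chain of subspaces; this is handled in \cite{DP1} but is not entirely automatic.
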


\subsection{Structure of ind-variety on $\mathbf{Fl}(\mathcal{F},E)$ and $\mathbf{Fl}(\mathcal{F},\omega,E)$}

\label{section-3-3}

In this section we present the structure of ind-variety on
$\mathbf{Fl}(\mathcal{F},E)$ and $\mathbf{Fl}(\mathcal{F},\omega,E)$
mentioned in Propositions \ref{proposition-2}--\ref{proposition-3-new}.

We assume that $\mathcal{F}$ is a generalized flag compatible with the basis $E$.
Let $(A,\preceq)$ be a totally ordered set
such that $\mathcal{F}\in\mathbf{Fl}_A(V)$. Hence we can write
$\mathcal{F}=\{F'_\alpha,F''_\alpha:\alpha\in A\}$. Let $\sigma:E\to A$
be the surjective map corresponding to $\mathcal{F}$ in the sense of Definition \ref{definition-3}.

Let $I\subset E$ be a finite subset. The generalized flag $\mathcal{F}$
gives rise to a (finite) flag $\mathcal{F}|_I$ of the finite-dimensional vector space $\langle I\rangle$
by letting
\[\mathcal{F}|_I:=\{F\cap\langle I\rangle:F\in\mathcal{F}\}=
\{F'_\alpha\cap\langle I\rangle,F''_\alpha\cap\langle I\rangle:\alpha\in A\}.\]
Let
\[d'_\alpha:=\dim F'_\alpha\cap\langle I\rangle=|\{e\in I:\sigma(e)\prec \alpha\}|\quad
\mbox{and}\quad
d''_\alpha:=\dim F''_\alpha\cap\langle I\rangle=|\{e\in I:\sigma(e)\preceq \alpha\}|.\]
We denote by $\mathrm{Fl}(\mathcal{F},I)$ the projective variety of flags in the space $\langle I\rangle$ of the form $\{M'_\alpha,M''_\alpha:\alpha\in A\}$
where $M'_\alpha,M''_\alpha\subset\langle I\rangle$ are linear subspaces such that
\[\dim M'_\alpha=d'_\alpha,\ \dim M''_\alpha=d''_\alpha,\ M'_\alpha\subset M''_\alpha\ \mbox{for all $\alpha\in A$, and }M''_\alpha\subset M'_\beta\mbox{ whenever $\alpha\prec\beta$}.\]
If $J\subset E$ is another finite subset such that $I\subset J$, we define an embedding $\iota_{I,J}:\mathrm{Fl}(\mathcal{F},I)\hookrightarrow \mathrm{Fl}(\mathcal{F},J)$, $\{M'_\alpha,M''_\alpha:\alpha\in A\}\mapsto\{N'_\alpha,N''_\alpha:\alpha\in A\}$ by letting
\[N'_\alpha=M'_\alpha\oplus\langle e\in J\setminus I:\sigma(e)\prec\alpha\rangle
\quad\mbox{and}\quad
N''_\alpha=M''_\alpha\oplus\langle e\in J\setminus I:\sigma(e)\preceq\alpha\rangle\ \mbox{ for all $\alpha\in A$}.\]
If we consider a filtration $E=\bigcup_{n\geq 1}E_n$ of the basis $E$ by finite subsets,
then we obtain a chain of morphisms of projective varieties
\begin{equation}
\label{rel8-newnew}
\mathrm{Fl}(\mathcal{F},E_1)
\stackrel{\iota_1}{\hookrightarrow} \mathrm{Fl}(\mathcal{F},E_2)
\stackrel{\iota_2}{\hookrightarrow}\cdots\stackrel{\iota_{n-1}}{\hookrightarrow}
\mathrm{Fl}(\mathcal{F},E_n)\stackrel{\iota_n}{\hookrightarrow} \mathrm{Fl}(\mathcal{F},E_{n+1})\stackrel{\iota_{n+1}}{\hookrightarrow}\cdots
\end{equation}
where $\iota_n:=\iota_{E_n,E_{n+1}}$.

\begin{proposition}[\cite{DP}]
\label{proposition-3-3-3}
The set $\mathbf{Fl}(\mathcal{F},E)$ is the direct limit of the chain of morphisms
(\ref{rel8-newnew}). Hence $\mathbf{Fl}(\mathcal{F},E)$ is endowed with a structure of ind-variety.
Moreover, this structure is independent of the filtration $\{E_n\}_{n\geq 1}$ of the basis $E$.
\end{proposition}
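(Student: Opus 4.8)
The plan is to exhibit an explicit bijection between the direct limit $\lim_{\to}\mathrm{Fl}(\mathcal{F},E_n)$ and the set $\mathbf{Fl}(\mathcal{F},E)$, and then to read off the ind-variety structure together with its independence of the filtration. First I would describe the \emph{extension map} at each finite level: given a flag $\{M'_\alpha,M''_\alpha:\alpha\in A\}\in\mathrm{Fl}(\mathcal{F},E_n)$, set $\tilde F'_\alpha=M'_\alpha\oplus\langle e\in E\setminus E_n:\sigma(e)\prec\alpha\rangle$ and $\tilde F''_\alpha=M''_\alpha\oplus\langle e\in E\setminus E_n:\sigma(e)\preceq\alpha\rangle$. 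Choosing a basis $B_n$ of $\langle E_n\rangle$ adapted to $\{M'_\alpha,M''_\alpha\}$, one checks that $\tilde{\mathcal F}:=\{\tilde F'_\alpha,\tilde F''_\alpha\}$ is compatible with the basis $L=B_n\cup(E\setminus E_n)$, hence is a genuine generalized flag (Definition \ref{definition-1}), that it is weakly compatible with $E$ since $E\setminus(E\cap L)\subset E_n$ is finite, and that it is $E$-commensurable with $\mathcal F$ with witness $U=\langle E_n\rangle$ (here $\tilde F_\alpha+U=F_\alpha+U$ and $\dim\tilde F_\alpha\cap U=d_\alpha=\dim F_\alpha\cap U$). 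By construction these extension maps are compatible with the transition embeddings $\iota_n$ of (\ref{rel8-newnew}), so they assemble into a single map $\lim_{\to}\mathrm{Fl}(\mathcal{F},E_n)\to\mathbf{Fl}(\mathcal F,E)$. Injectivity is immediate from $\tilde F'_\alpha\cap\langle E_n\rangle=M'_\alpha$, which recovers the finite flag from its extension.

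The heart of the argument is surjectivity: every $\mathcal G\in\mathbf{Fl}(\mathcal F,E)$ should be the extension of a finite flag. Write $\mathcal G=\{G'_\alpha,G''_\alpha:\alpha\in A\}$ using the order isomorphism $\phi:\mathcal F\to\mathcal G$ from $E$-commensurability, and let $U$ be a witnessing finite-dimensional subspace. Since $\mathcal G$ is weakly compatible with $E$, fix a basis $L$ compatible with $\mathcal G$, via a map $\tau:L\to A$, with $D:=E\setminus(E\cap L)$ and $L':=L\setminus E$ finite. I would now choose $n$ so large that $U\subset\langle E_n\rangle$, $D\subset E_n$ and $L'\subset\langle E_n\rangle$. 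The key structural observation is that for such $n$ the subspace $\langle E_n\rangle$ is simultaneously a coordinate subspace for $L$: setting $L_n:=(E_n\cap L)\cup L'$ one verifies by a dimension count that $\langle L_n\rangle=\langle E_n\rangle$ and $L\setminus L_n=E\setminus E_n$. Consequently $\mathcal G$ splits along this decomposition, $G'_\alpha=(G'_\alpha\cap\langle E_n\rangle)\oplus(G'_\alpha\cap\langle E\setminus E_n\rangle)$, with $G'_\alpha\cap\langle E\setminus E_n\rangle=\langle e\in E\setminus E_n:\tau(e)\prec\alpha\rangle$.

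It then remains to match $\tau$ with $\sigma$ on $E\setminus E_n$. Projecting to $\bar V=V/\langle E_n\rangle$, the relation $G'_\alpha+U=F'_\alpha+U$ with $U\subset\langle E_n\rangle$ gives $\bar G'_\alpha=\bar F'_\alpha$ for all $\alpha$; expanding both sides in the basis $\{\bar e:e\in E\setminus E_n\}$ of $\bar V$ yields $\{e\in E\setminus E_n:\tau(e)\prec\alpha\}=\{e\in E\setminus E_n:\sigma(e)\prec\alpha\}$ for every $\alpha$, whence $\tau=\sigma$ on $E\setminus E_n$ because an element of the totally ordered set $A$ is determined by its strict upper set. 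Thus $\mathcal G$ is exactly the extension of the finite flag $\{M'_\alpha,M''_\alpha\}$ with $M'_\alpha:=G'_\alpha\cap\langle E_n\rangle$; and enlarging the commensurability witness to $U=\langle E_n\rangle$ (allowed by the remark following Definition \ref{definition-3-new}) gives $\dim M'_\alpha=\dim F'_\alpha\cap\langle E_n\rangle=d'_\alpha$, so that $\{M'_\alpha,M''_\alpha\}\in\mathrm{Fl}(\mathcal F,E_n)$. This establishes the bijection and hence the ind-variety structure.

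For independence of the filtration, I would work with the directed system $\{\mathrm{Fl}(\mathcal F,I)\}_I$ indexed by all finite subsets $I\subset E$ ordered by inclusion, with transition morphisms the closed embeddings $\iota_{I,J}$. A short computation shows $\iota_{J,K}\circ\iota_{I,J}=\iota_{I,K}$, so this is a genuine directed system, and any filtration $\{E_n\}$ of $E$ by finite subsets is cofinal in it; two filtrations interleave, so by the invariance of the direct limit under passing to interleaved subsequences recorded in Section \ref{section-2-1} the two resulting ind-variety structures coincide, the comparison isomorphism being the identity on $\mathbf{Fl}(\mathcal F,E)$. The main obstacle is the surjectivity step of the second and third paragraphs: one must produce, for an \emph{arbitrary} commensurable flag, a single finite $E_n$ capturing all of its discrepancy with $\mathcal F$, and the coordinate-subspace identification $\langle E_n\rangle=\langle L_n\rangle$ together with the projection argument matching $\sigma$ and $\tau$ is what makes this work; once this is in place, well-definedness, injectivity and independence are comparatively routine.
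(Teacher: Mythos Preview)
The paper does not give its own proof of this proposition: it is quoted from \cite{DP} and stated without argument. So there is no in-paper proof to compare against; your proposal stands as a self-contained verification of a result the authors import from the literature.

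Your argument is correct. The extension map and its compatibility with the $\iota_n$ are straightforward, and injectivity is immediate. The surjectivity step is the substantive part, and you handle it cleanly: the key point is that once $E_n$ is chosen large enough to contain $D$, $U$, and $L'$, the finite set $L_n=(E_n\cap L)\cup L'$ has $|L_n|=|E_n|$ and $L_n\subset\langle E_n\rangle$, so $\langle L_n\rangle=\langle E_n\rangle$ and $L\setminus L_n=E\setminus E_n$; this gives the splitting of each $G'_\alpha$ along $\langle E_n\rangle\oplus\langle E\setminus E_n\rangle$. The projection to $V/\langle E_n\rangle$ then forces $\tau=\sigma$ on $E\setminus E_n$ because two elements of a totally ordered set with the same strict upper set coincide. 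Finally, your appeal to the remark after Definition~\ref{definition-3-new} to enlarge $U$ to $\langle E_n\rangle$ is legitimate (the modular law gives $(\phi(F)\cap U')+U=(F\cap U')+U$ for $U\subset U'$, and together with $\dim\phi(F)\cap U=\dim F\cap U$ this yields $\dim\phi(F)\cap U'=\dim F\cap U'$), so $\dim M'_\alpha=d'_\alpha$ and $\{M'_\alpha,M''_\alpha\}\in\mathrm{Fl}(\mathcal F,E_n)$ as required. The independence-of-filtration argument via cofinality in the directed system of all finite subsets is the standard one and is exactly what the remark in Section~\ref{section-2-1} about passing to subsequences is set up to justify.
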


We assume next that the space $V$ is endowed with a nondegenerate symmetric or skew-symmetric bilinear form $\omega$, that the basis $E$ is $\omega$-isotropic with corresponding involution
$i_E:E\to E$, that the ordered set $(A,\preceq)$ is equipped with
an anti-automorphism $i_A:A\to A$,
and that the surjection $\sigma:E\to A$ satisfies $\sigma\circ i_E=i_A\circ\sigma$ so
that $\mathcal{F}$ is an $\omega$-isotropic generalized flag.

Consider an $i_E$-stable finite subset $I\subset E$. Then the restriction
of $\omega$ to the space $\langle I\rangle$ is nondegenerate.
Let $\mathrm{Fl}(\mathcal{F},\omega,I)\subset\mathrm{Fl}(\mathcal{F},I)$ be the (closed) subvariety formed by flags $\{M'_\alpha,M''_\alpha:\alpha\in A\}$ such that
\[((M''_\alpha)^{\perp_I},(M'_\alpha)^{\perp_I})=(M'_{i_A(\alpha)},M''_{i_A(\alpha)})\ \mbox{ for all $\alpha\in A$},\]
where the notation $\perp_I$ stands for orthogonal subspace in the space $(\langle I\rangle,\omega)$.
If $J\subset E$ is another $i_E$-stable finite subset, then the embedding $\iota_{I,J}$ restricts to an embedding
$\iota_{I,J}^\omega:\mathrm{Fl}(\mathcal{F},\omega,I)\hookrightarrow \mathrm{Fl}(\mathcal{F},\omega,J)$.
Consequently, for a filtration $E=\bigcup_{n\geq 1}E_n$ by $i_E$-stable finite subsets, we obtain a chain of morphisms of projective varieties
\begin{equation}
\label{rel9-newnew}
\mathrm{Fl}(\mathcal{F},\omega,E_1)
\stackrel{\iota^\omega_1}{\hookrightarrow} \mathrm{Fl}(\mathcal{F},\omega,E_2)
\stackrel{\iota^\omega_2}{\hookrightarrow}\cdots\stackrel{\iota^\omega_{n-1}}{\hookrightarrow}
\mathrm{Fl}(\mathcal{F},\omega,E_n)\stackrel{\iota^\omega_n}{\hookrightarrow} \mathrm{Fl}(\mathcal{F},\omega,E_{n+1})\stackrel{\iota^\omega_{n+1}}{\hookrightarrow}\cdots
\end{equation}
where $\iota^\omega_n:=\iota^\omega_{E_n,E_{n+1}}$.

\begin{proposition}[\cite{DP}]
\label{proposition-3-3-2}
The set $\mathbf{Fl}(\mathcal{F},\omega,E)$ is the direct limit of the chain of morphisms
(\ref{rel9-newnew}). Hence $\mathbf{Fl}(\mathcal{F},\omega,E)$ is endowed with a structure of ind-variety, independent of the filtration $\{E_n\}_{n\geq 1}$.
Moreover, $\mathbf{Fl}(\mathcal{F},\omega,E)$ is a closed ind-subvariety of $\mathbf{Fl}(\mathcal{F},E)$.
\end{proposition}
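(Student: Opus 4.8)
The plan is to show that the extension construction used for $\mathbf{Fl}(\mathcal{F},E)$ in Proposition \ref{proposition-3-3-3} restricts to the isotropic setting and identifies the direct limit of the chain (\ref{rel9-newnew}) with $\mathbf{Fl}(\mathcal{F},\omega,E)$, simultaneously realizing the latter as a closed subset of $\mathbf{Fl}(\mathcal{F},E)$. All three assertions then follow at once: once I know that the canonical map $\mathrm{Fl}(\mathcal{F},E_n)\to\mathbf{Fl}(\mathcal{F},E)$ pulls $\mathbf{Fl}(\mathcal{F},\omega,E)$ back to exactly the closed subvariety $\mathrm{Fl}(\mathcal{F},\omega,E_n)$, closedness and the description as a direct limit are immediate, and independence of the filtration is inherited from the corresponding statement for $\mathbf{Fl}(\mathcal{F},E)$.

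The technical core is a single computation. Fix an $i_E$-stable finite subset $I\subset E$. Since $I$ is $i_E$-stable (and, in type B, contains the fixed point of $i_E$ by our standing convention), the restriction of $\omega$ to $\langle I\rangle$ is nondegenerate and $\langle I\rangle^\perp=\langle E\setminus I\rangle$, so that $V=\langle I\rangle\oplus\langle E\setminus I\rangle$ is an orthogonal direct sum. Given a flag $\{M'_\alpha,M''_\alpha:\alpha\in A\}\in\mathrm{Fl}(\mathcal{F},I)$, its extension $\mathcal{G}=\{N'_\alpha,N''_\alpha\}$ to a generalized flag in $V$ satisfies $N''_\alpha=M''_\alpha\oplus\langle e\in E\setminus I:\sigma(e)\preceq\alpha\rangle$, a sum of a subspace of $\langle I\rangle$ and a subspace of $\langle E\setminus I\rangle$. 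Using the orthogonal splitting I would compute
\[(N''_\alpha)^\perp=(M''_\alpha)^{\perp_I}\oplus\langle e\in E\setminus I:\sigma(e)\preceq\alpha\rangle^{\perp_{E\setminus I}}.\]
For the second summand, the $\omega$-isotropic structure of the basis $E\setminus I$ together with the identity $\sigma\circ i_E=i_A\circ\sigma$ gives $\langle e\in E\setminus I:\sigma(e)\preceq\alpha\rangle^{\perp_{E\setminus I}}=\langle e\in E\setminus I:\sigma(e)\prec i_A(\alpha)\rangle$, because a basis vector $f$ is orthogonal to the left-hand span exactly when $i_E(f)$ does not occur there, i.e. when $i_A(\sigma(f))\not\preceq\alpha$, equivalently $\sigma(f)\prec i_A(\alpha)$. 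Comparing with $N'_{i_A(\alpha)}=M'_{i_A(\alpha)}\oplus\langle e\in E\setminus I:\sigma(e)\prec i_A(\alpha)\rangle$, I conclude that $(N''_\alpha)^\perp=N'_{i_A(\alpha)}$ holds if and only if $(M''_\alpha)^{\perp_I}=M'_{i_A(\alpha)}$, and symmetrically for $(N'_\alpha)^\perp=N''_{i_A(\alpha)}$. Thus $\mathcal{G}$ is $\omega$-isotropic, i.e. satisfies (\ref{notation-Fa3}), if and only if $\{M'_\alpha,M''_\alpha\}\in\mathrm{Fl}(\mathcal{F},\omega,I)$.

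With this equivalence in hand I would finish as follows. Applying it with $I=E_n$ shows that the preimage of $\mathbf{Fl}(\mathcal{F},\omega,E)$ under $\mathrm{Fl}(\mathcal{F},E_n)\to\mathbf{Fl}(\mathcal{F},E)$ is precisely the closed subvariety $\mathrm{Fl}(\mathcal{F},\omega,E_n)$; hence $\mathbf{Fl}(\mathcal{F},\omega,E)$ is closed in the ind-topology and its induced ind-variety structure is exactly the direct limit of (\ref{rel9-newnew}). For surjectivity of the extension map onto $\mathbf{Fl}(\mathcal{F},\omega,E)$, I would take an arbitrary $\mathcal{G}\in\mathbf{Fl}(\mathcal{F},\omega,E)$; since $\mathcal{G}\in\mathbf{Fl}(\mathcal{F},E)$, Proposition \ref{proposition-3-3-3} provides an $n$ for which $\mathcal{G}$ is the extension of $\{G\cap\langle E_n\rangle:G\in\mathcal{G}\}\in\mathrm{Fl}(\mathcal{F},E_n)$, and the equivalence above then forces this finite flag into $\mathrm{Fl}(\mathcal{F},\omega,E_n)$. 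Finally, independence of the structure on the filtration $\{E_n\}$ follows because the closed-subvariety structure inherited from $\mathbf{Fl}(\mathcal{F},E)$ makes no reference to the filtration, while $\mathbf{Fl}(\mathcal{F},E)$ itself is filtration-independent by Proposition \ref{proposition-3-3-3}.

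The step I expect to be most delicate is not the orthogonality computation itself but the passage from an abstract isotropic generalized flag $\mathcal{G}$ to a finite level: the equality $(G\cap\langle E_n\rangle)^{\perp_{E_n}}=G^\perp\cap\langle E_n\rangle$, which converts global isotropy of $\mathcal{G}$ into isotropy of its restriction in $\langle E_n\rangle$, requires the splitting $G=(G\cap\langle E_n\rangle)\oplus(G\cap\langle E\setminus E_n\rangle)$ for every $G\in\mathcal{G}$. This splitting is automatic for extended flags, but for a general $\mathcal{G}$ it is guaranteed, for $n$ large, precisely by the fact from Proposition \ref{proposition-3-3-3} that every flag $E$-commensurable with $\mathcal{F}$ is the extension of its restriction to some $\langle E_n\rangle$; keeping track of the anti-automorphism $i_A$ and of the convention that each $E_n$ contains the fixed point of $i_E$ is the only genuinely fiddly bookkeeping.
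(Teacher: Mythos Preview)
The paper does not supply its own proof of this proposition: it is quoted from \cite{DP} and stated without argument. Your proposal therefore cannot be compared to a proof in the paper, but it is a correct and well-organized argument. The orthogonality computation is the right core, your use of the orthogonal splitting $V=\langle I\rangle\oplus\langle E\setminus I\rangle$ is sound (the $i_E$-stability of $I$ and the type-B convention ensure nondegeneracy on both summands), and your reduction of closedness and filtration-independence to Proposition~\ref{proposition-3-3-3} is exactly how one should package the conclusion. Your closing paragraph correctly isolates the one genuinely nontrivial point---that an arbitrary $\mathcal{G}\in\mathbf{Fl}(\mathcal{F},\omega,E)$ splits along $\langle E_n\rangle\oplus\langle E\setminus E_n\rangle$ for large $n$---and correctly resolves it by appeal to Proposition~\ref{proposition-3-3-3}.
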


\section{Schubert decomposition of $\mathbf{Fl}(\mathcal{F},E)$ and $\mathbf{Fl}(\mathcal{F},\omega,E)$}

\label{section-4}

Let $\mathbf{G}$ be one of the groups $\mathbf{G}(E)$ or $\mathbf{G}^\omega(E)$. Let $\mathbf{P}$ and $\mathbf{B}$ be respectively a splitting parabolic subgroup and a splitting Borel subgroup of $\mathbf{G}$, both containing the  splitting Cartan subgroup $\mathbf{H}=\mathbf{H}(E)$ or $\mathbf{H}^\omega(E)$.
From the previous section we know that the homogeneous space $\mathbf{G}/\mathbf{P}$ can be viewed as an ind-variety of generalized flags of the form $\mathbf{Fl}(\mathcal{F},E)$ or $\mathbf{Fl}(\mathcal{F},\omega,E)$.
In this section we describe the decomposition of $\mathbf{G}/\mathbf{P}$ into $\mathbf{B}$-orbits. The main results are stated in Theorem \ref{theorem-1} in the case of $\mathbf{G}=\mathbf{G}(E)$ and in Theorem \ref{theorem-2} in the case of $\mathbf{G}=\mathbf{G}^\omega(E)$. In both cases it is shown that the $\mathbf{B}$-orbits form a cell decomposition of $\mathbf{G}/\mathbf{P}$, and their dimensions and closures are expressed in combinatorial terms.
In Section \ref{section-4-3} we derive the decomposition of the ind-group $\mathbf{G}$ into double cosets.
Unlike the case of Kac--Moody groups, the $\mathbf{B}$-orbits of $\mathbf{G}/\mathbf{P}$ can be infinite dimensional. The cases where all orbits are finite dimensional (resp., infinite dimensional) are characterized in Section \ref{section-4-4}.
In Section \ref{section-4-5} we focus on the situation where $\mathbf{G}/\mathbf{P}$ is an ind-grassmannian.

In this section the results are stated without proofs.\ The proofs are given in Section \ref{section-5}.

\subsection{Decomposition of $\mathbf{Fl}(\mathcal{F},E)$}

\label{section-4-1}

Let $\mathbf{G}=\mathbf{G}(E)$, $\mathbf{H}=\mathbf{H}(E)$, and $\mathbf{P}$, $\mathbf{B}$ be as above.
By Propositions \ref{proposition-1}--\ref{proposition-2} there is a generalized flag $\mathcal{F}$ compatible with $E$ such that $\mathbf{P}=\mathbf{P}_\mathcal{F}$ is the subgroup of elements $g\in\mathbf{G}(E)$ preserving $\mathcal{F}$, and the homogenenous space $\mathbf{G}(E)/\mathbf{P}$ is isomorphic to the ind-variety of generalized flags $\mathbf{Fl}(\mathcal{F},E)$.
The precise description of the decomposition of $\mathbf{Fl}(\mathcal{F},E)$ into $\mathbf{B}$-orbits is the object of this section.\ It requires some preliminaries and notation.

We denote by $\mathbf{W}(E)$ the group of permutations
$w:E\to E$ such that $w(e)=e$ for all but finitely many $e\in E$.
In particular, $\mathbf{W}(E)$ is isomorphic to the infinite symmetric group $\mathfrak{S}_\infty$.
Note that we have
\[\mathbf{W}(E)=\bigcup_{n\geq 1}W(E_n)\]
where $W(E_n)$ is the Weyl group of $G(E_n)$.

Let $(A,\preceq_A):=(A_\mathcal{F},\subset)$ be the set of pairs of consecutive elements of $\mathcal{F}$, so that $\mathcal{F}\in\mathbf{Fl}_A(V)$ and in fact $\mathbf{Fl}(\mathcal{F},E)\subset\mathbf{Fl}_A(V)$. Let $\mathfrak{S}(E,A)$ be the set of surjective maps $\sigma:E\to A$. For $\sigma\in\mathfrak{S}(E,A)$,
we denote by $\mathcal{F}_\sigma$ the generalized flag $\mathcal{F}_\sigma=\{F'_{\sigma,\alpha},F''_{\sigma,\alpha}:\alpha\in A\}$ given by
\begin{equation}
\label{relation-8}
F'_{\sigma,\alpha}=\langle e\in E:\sigma(e)\prec_A \alpha\rangle\quad\mbox{and}\quad F''_{\sigma,\alpha}=\langle e\in E:\sigma(e)\preceq_A\alpha\rangle.
\end{equation}
Thus $\{\mathcal{F}_\sigma:\sigma\in\mathfrak{S}(E,A)\}$ are exactly the generalized flags of $\mathbf{Fl}_A(V)$ compatible with the basis $E$ (see Definition \ref{definition-3}).
Let $\sigma_0:E\to A$ be the surjective map such that $\mathcal{F}=\mathcal{F}_{\sigma_0}$.

\begin{remark}
\label{remark-4}
The totally ordered set $(A,\preceq_A)$ and the surjective map $\sigma_0:E\to A$ give rise to a partial order $\preceq_\mathbf{P}$ on $E$, defined by letting $e\prec_\mathbf{P}e'$ if $\sigma_0(e)\prec_A\sigma_0(e')$.
Note that the partial order $\preceq_\mathbf{P}$ has the property
\begin{equation}
\label{10-new}
\parbox{12cm}{the relation ``$e$ is not comparable with $e'$'' (i.e., neither $e\prec_\mathbf{P}e'$ nor $e'\prec_\mathbf{P}e$) is an equivalence relation.}
\end{equation}
In fact, fixing
a splitting parabolic subgroup $\mathbf{P}\subset\mathbf{G}(E)$ containing $\mathbf{H}(E)$ is equivalent to fixing a partial order $\preceq_\mathbf{P}$ on $E$ satisfying property (\ref{10-new}). Moreover, $\mathbf{P}$ is a splitting Borel subgroup if and only if the order $\preceq_\mathbf{P}$ is total.
\end{remark}

The group $\mathbf{W}(E)$ acts on the set $\mathfrak{S}(E,A)$, hence on $E$-compatible generalized flags of $\mathbf{Fl}_A(V)$, by the procedure $(w,\sigma)\mapsto \sigma\circ w^{-1}$.
Let $\mathbf{W}_\mathbf{P}(E)\subset\mathbf{W}(E)$ be the subgroup of permutations such that $\sigma_0\circ w^{-1}=\sigma_0$.
Equivalently,
$\mathbf{W}_\mathbf{P}(E)$ is the subgroup of permutations $w\in\mathbf{W}(E)$
which preserve the fibers $\sigma_0^{-1}(\alpha)$ ($\alpha\in A$) of the map $\sigma_0$.

\begin{lemma}
\label{lemma-2-new}
The map $w\mapsto \mathcal{F}_{\sigma_0\circ w^{-1}}$ induces a bijection between the quotient $\mathbf{W}(E)/\mathbf{W}_\mathbf{P}(E)$ and the set of $E$-compatible generalized flags of the ind-variety $\mathbf{Fl}(\mathcal{F},E)$.
\end{lemma}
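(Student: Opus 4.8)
The plan is to factor the asserted bijection through the $\mathbf{W}(E)$-orbit $\mathcal{O}:=\mathbf{W}(E)\cdot\sigma_0\subset\mathfrak{S}(E,A)$ of the distinguished surjection $\sigma_0$ (for the action $(w,\sigma)\mapsto\sigma\circ w^{-1}$), writing the map of the lemma as a composite
\[
\mathbf{W}(E)/\mathbf{W}_\mathbf{P}(E)\ \longrightarrow\ \mathcal{O}\ \longrightarrow\ \{E\text{-compatible flags in }\mathbf{Fl}(\mathcal{F},E)\},\qquad [w]\mapsto \sigma_0\circ w^{-1}\mapsto \mathcal{F}_{\sigma_0\circ w^{-1}},
\]
and showing that each arrow is a bijection. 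The first arrow is the usual orbit--stabilizer bijection: by the definition of $\mathbf{W}_\mathbf{P}(E)$ as the stabilizer $\{w:\sigma_0\circ w^{-1}=\sigma_0\}$, the orbit map $w\mapsto\sigma_0\circ w^{-1}$ is constant on, and separates, the left cosets of $\mathbf{W}_\mathbf{P}(E)$. Alongside this I would record an explicit combinatorial description of $\mathcal{O}$: a surjection $\tau\in\mathfrak{S}(E,A)$ lies in $\mathcal{O}$ if and only if \textbf{(a)} the set $\{e\in E:\tau(e)\neq\sigma_0(e)\}$ is finite, and \textbf{(b)} $|\tau^{-1}(\alpha)\cap S|=|\sigma_0^{-1}(\alpha)\cap S|$ for all $\alpha\in A$, where $S$ is any finite set containing the disagreement set in (a). If $\tau=\sigma_0\circ w^{-1}$ then (a) holds with $S=\mathrm{supp}(w)$ and (b) follows from $\tau^{-1}(\alpha)=w(\sigma_0^{-1}(\alpha))$; conversely, given (a)--(b), one builds $w\in\mathbf{W}(E)$ with $\tau=\sigma_0\circ w^{-1}$ by fixing $E\setminus S$ pointwise and choosing, for each $\alpha$, a bijection $\sigma_0^{-1}(\alpha)\cap S\to\tau^{-1}(\alpha)\cap S$, which exists precisely by (b).

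For injectivity of the second arrow, suppose $\tau_1,\tau_2\in\mathcal{O}$ give the same chain $\mathcal{F}_{\tau_1}=\mathcal{F}_{\tau_2}$. Reading off, for each $e\in E$, the unique pair of consecutive subspaces in whose difference $e$ lies shows that $\tau_1$ and $\tau_2$ induce the \emph{same} ordered partition of $E$; hence $\tau_2=\theta\circ\tau_1$ for a unique order-automorphism $\theta$ of $(A,\preceq_A)$. By property (a) both maps agree with $\sigma_0$ off a common finite set, so $\theta$ fixes $\sigma_0(e)$ for all $e$ outside that set, and since $\sigma_0$ is surjective, $\theta$ fixes all but finitely many points of $A$. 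An order-automorphism of a totally ordered set fixing a cofinite subset is the identity (otherwise iterating $\theta$ on a moved point yields an infinite strictly monotone orbit of moved points), so $\theta=\mathrm{id}$ and $\tau_1=\tau_2$. This is the step where membership in $\mathcal{O}$, rather than in all of $\mathfrak{S}(E,A)$, is essential: when $A\cong\mathbb{Z}$ a shift shows that $\sigma\mapsto\mathcal{F}_\sigma$ is \emph{not} injective on $\mathfrak{S}(E,A)$, and property (a) is exactly what rigidifies the labeling.

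To identify the image I would use the exhaustion $\mathbf{Fl}(\mathcal{F},E)=\bigcup_n\mathrm{Fl}(\mathcal{F},E_n)$ of Proposition \ref{proposition-3-3-3} rather than commensurability directly. If $\tau\in\mathcal{O}$, choose $N$ with $E_N\supset\{e:\tau(e)\neq\sigma_0(e)\}$; then (a)--(b) give $\dim(F'_{\tau,\alpha}\cap\langle E_N\rangle)=d'_\alpha$ and $\dim(F''_{\tau,\alpha}\cap\langle E_N\rangle)=d''_\alpha$ for all $\alpha$, so $\mathcal{F}_\tau\cap\langle E_N\rangle\in\mathrm{Fl}(\mathcal{F},E_N)$, and since $\tau=\sigma_0$ off $E_N$ the flag $\mathcal{F}_\tau$ is the image of this point under the embedding into $\mathbf{Fl}(\mathcal{F},E)$, whence $\mathcal{F}_\tau\in\mathbf{Fl}(\mathcal{F},E)$. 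Conversely, let $\mathcal{G}$ be an $E$-compatible flag of $\mathbf{Fl}(\mathcal{F},E)$. For $N$ large $\mathcal{G}$ is the image of some $x\in\mathrm{Fl}(\mathcal{F},E_N)$; as $\mathcal{G}$ is fixed by $\mathbf{H}(E)$ and the embedding is equivariant for the maximal torus $\mathbf{H}(E)\cap G(E_N)$ of $G(E_N)$ (which acts trivially on the $\sigma_0$-governed tail), the point $x$ is torus-fixed, hence a coordinate flag encoded by a map $\sigma_x:E_N\to A$ with block sizes $|\sigma_x^{-1}(\alpha)|=d''_\alpha-d'_\alpha$. Setting $\tau:=\sigma_x$ on $E_N$ and $\tau:=\sigma_0$ off $E_N$ produces $\tau\in\mathcal{O}$ with $\mathcal{F}_\tau=\mathcal{G}$.

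The main obstacle is the injectivity step: because $(A,\preceq_A)$ may admit nontrivial order-automorphisms, the parametrization $\sigma\mapsto\mathcal{F}_\sigma$ fails to be injective on all of $\mathfrak{S}(E,A)$, and the finiteness condition (a) cutting out the orbit $\mathcal{O}$ must be invoked to pin down the $A$-labeling of a given chain. The surjectivity onto $\mathbf{H}(E)$-fixed points and the matching of fiber cardinalities are then routine bookkeeping with the explicit embeddings $\iota_{E_N,E_{N+1}}$ of Section \ref{section-3-3}.
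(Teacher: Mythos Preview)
Your argument is correct and follows essentially the same route as the paper: factor through the orbit $\mathbf{W}(E)\cdot\sigma_0$, and identify the $E$-compatible elements of $\mathbf{Fl}(\mathcal{F},E)$ with torus-fixed points of the finite-dimensional pieces $\mathrm{Fl}(\mathcal{F},E_n)$ via Proposition~\ref{proposition-11-new}\,(a). Two differences in execution are worth noting. To show that $\mathcal{F}_{\sigma_0\circ w^{-1}}$ lands in $\mathbf{Fl}(\mathcal{F},E)$, the paper simply observes $\mathcal{F}_{\sigma_0\circ w^{-1}}=\hat{w}(\mathcal{F})$ and invokes the $\mathbf{G}(E)$-homogeneity of $\mathbf{Fl}(\mathcal{F},E)$ (Proposition~\ref{proposition-2}); this is shorter than your direct dimension count through the exhaustion. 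Conversely, the paper asserts without comment that $\mathcal{F}_{\sigma_0\circ w^{-1}}=\mathcal{F}_{\sigma_0\circ w'^{-1}}$ forces $\sigma_0\circ w^{-1}=\sigma_0\circ w'^{-1}$, whereas you correctly flag this as the one nontrivial point (since $\sigma\mapsto\mathcal{F}_\sigma$ is \emph{not} injective on all of $\mathfrak{S}(E,A)$ when $A$ has order-automorphisms) and supply the cofinite-fixed-point argument that rigidifies the $A$-labeling on the orbit. Your treatment is thus more complete at precisely the step the paper elides.
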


Let $\mathbf{W}(E)\cdot\sigma_0=\{\sigma_0\circ w^{-1}:w\in\mathbf{W}(E)\}$ denote the $\mathbf{W}(E)$-orbit of $\sigma_0$.

The splitting Borel subgroup $\mathbf{B}$ is the subgroup $\mathbf{B}=\mathbf{P}_{\mathcal{F}_0}$ of elements $g\in\mathbf{G}(E)$ preserving a maximal generalized flag $\mathcal{F}_0$ compatible with $E$ (see Proposition \ref{proposition-1}).
Equivalently $\mathbf{B}$ corresponds to a total order $\preceq_\mathbf{B}$ on the basis $E$ (see Remark \ref{remark-4}). Then, the generalized flag $\mathcal{F}_0=\{F'_{0,e},F''_{0,e}:e\in E\}$ is given by
$F'_{0,e}=\langle e'\in E:e'\prec_\mathbf{B}e\rangle$ and
$F''_{0,e}=\langle e'\in E:e'\preceq_\mathbf{B}e\rangle$ for all $e\in E$.

Relying on the total order $\preceq_\mathbf{B}$, we define a notion of inversion number and an analogue of the Bruhat order on the set $\mathfrak{S}(E,A)$.

{\it Number of inversions $n_\mathrm{inv}(\sigma)$.}
We say that a pair $(e,e')\in E\times E$ is an {\em inversion} of $\sigma\in\mathfrak{S}(E,A)$
if $e\prec_\mathbf{B}e'$ and $\sigma(e)\succ_A\sigma(e')$.
Then
\[n_\mathrm{inv}(\sigma):=|\{(e,e')\in E\times E:\mbox{$(e,e')$ is an inversion of $\sigma$}\}|\]
is the {\em inversion number of $\sigma$}.

\begin{remark}
{\rm (a)} The inversion number $n_\mathrm{inv}(\sigma)$ may be infinite. \\
{\rm (b)} If $\sigma\in\mathbf{W}(E)\cdot\sigma_0$, say $\sigma=\sigma_0\circ w$ with $w\in\mathbf{W}(E)$, then the inversion number of $\sigma$ is also given by the formula
\[n_\mathrm{inv}(\sigma)=|\{(e,e')\in E\times E:e\prec_\mathbf{B}e'\mbox{ and }w(e)\succ_\mathbf{P}w(e')\}|\]
(see Remark \ref{remark-4}). Note that the inversion number $n_\mathrm{inv}(\sigma)$ cannot be directly interpreted as a Bruhat length because we do not assume  $\mathbf{B}$ to be conjugate to a subgroup of $\mathbf{P}$.
\end{remark}

{\it Partial order $\leq$ on $\mathfrak{S}(E,A)$.}
We now define a partial order on the set $\mathfrak{S}(E,A)$, analogous to the Bruhat order.
For $(e,e')\in E\times E$ with $e\not=e'$, we denote by $t_{e,e'}$ the element of $\mathbf{W}(E)$ which exchanges $e$ with $e'$ and fixes every other element $e''\in E$.
Let $\sigma,\tau\in\mathfrak{S}(E,A)$.
We set $\sigma\hat{<}\tau$ if $\tau=\sigma\circ t_{e,e'}$ for a pair $(e,e')\in E\times E$ satisfying
$e\prec_\mathbf{B}e'$ and $\sigma(e)\prec_A\sigma(e')$.
We set $\sigma<\tau$ if there is a chain $\tau_0=\sigma\hat<\tau_1\hat<\tau_2\hat<\ldots\hat<\tau_k=\tau$ of elements of $\mathfrak{S}(E,A)$ (with $k\geq 1$).

{\it Element $\sigma_\mathcal{G}\in\mathfrak{S}(E,A)$.}
Given a generalized flag $\mathcal{G}=\{G'_\alpha,G''_\alpha:\alpha\in A\}\in\mathbf{Fl}_A(V)$ weakly compatible with $E$, we define an element $\sigma_\mathcal{G}\in\mathfrak{S}(E,A)$ which measures the relative position of $\mathcal{G}$ to the maximal generalized flag $\mathcal{F}_0$.
Set
\begin{equation}
\label{definition-sigmaG}
\sigma_\mathcal{G}(e)=\min\{\alpha\in A:G''_\alpha\cap F''_{0,e}\not= G''_\alpha\cap F'_{0,e}\} \ \mbox{ for all $e\in E$}.
\end{equation}
[It can be checked directly that the so obtained map $\sigma_\mathcal{G}:E\to A$  is indeed surjective, hence an element of $\mathfrak{S}(E,A)$. This fact is also shown in Section \ref{section-5-2} in the proof of Theorem \ref{theorem-2}.]
%[The existence of the minimum is guaranteed by the fact that $\mathcal{G}$ %is weakly compatible with $E$. Since $G'_\alpha\varsubsetneq G''_\alpha$, %the set $\{e\in E:G''_\alpha\cap F''_{0,e}\not=G'_\alpha\cap F'_{0,e}\}$ %is nonempty. If this set admits a minimal element $e_0$, then it is easy %to check that $\sigma_\mathcal{G}(e_0)=\alpha$. Otherwise, this set is infinite %and, using that $\mathcal{F}$ is weakly compatible with $E$, we find $e\in %E$, such that $e\in G''_\alpha$ and $G'_\alpha\subset\langle e':e'\not=e\rangle$, %which forces $\sigma_\mathcal{G}(e)=\alpha$. In both cases, we conclude %that the map $\sigma_\mathcal{G}:E\to A$ is surjective, hence is indeed %an element of $\mathfrak{S}(E,A)$.]

We are now in position to formulate the statement which describes the decomposition of $\mathbf{Fl}(\mathcal{F},E)$ into $\mathbf{B}$-orbits.

\begin{theorem}
\label{theorem-1}
Let $\mathbf{P}_\mathcal{F}$ be the splitting parabolic subgroup of $\mathbf{G}(E)$ containing $\mathbf{H}(E)$, and corresponding to a generalized flag $\mathcal{F}=\mathcal{F}_{\sigma_0}\in\mathbf{Fl}_A(V)$ (with $\sigma_0\in\mathfrak{S}(E,A)$) compatible with $E$.\ Let $\mathbf{B}$ be any splitting Borel subgroup of $\mathbf{G}(E)$ containing $\mathbf{H}(E)$. \\
{\rm (a)} We have the decomposition
\[\mathbf{G}(E)/\mathbf{P}_\mathcal{F}=\mathbf{Fl}(\mathcal{F},E)=\bigsqcup_{\sigma\in\mathbf{W}(E)\cdot\sigma_0}\mathbf{B}\mathcal{F}_\sigma=\bigsqcup_{w\in \mathbf{W}(E)/\mathbf{W}_\mathbf{P}(E)}\mathbf{B}\mathcal{F}_{\sigma_0\circ w^{-1}}.\]
{\rm (b)}
A generalized flag $\mathcal{G}\in\mathbf{Fl}(\mathcal{F},E)$ belongs to the $\mathbf{B}$-orbit $\mathbf{B}\mathcal{F}_\sigma$ ($\sigma\in\mathbf{W}(E)\cdot\sigma_0$) if and only if $\sigma_\mathcal{G}=\sigma$. \\
{\rm (c)} The orbit $\mathbf{B}\mathcal{F}_\sigma$ ($\sigma\in\mathbf{W}(E)\cdot\sigma_0$) is a locally closed ind-subvariety of $\mathbf{Fl}(\mathcal{F},E)$ isomorphic to the affine space $\mathbb{A}^{n_\mathrm{inv}(\sigma)}$ (which is infinite dimensional if $n_\mathrm{inv}(\sigma)$ is infinite). \\
{\rm (d)} For $\sigma,\tau\in\mathbf{W}(E)\cdot\sigma_0$, the inclusion $\mathbf{B}\mathcal{F}_\sigma\subset\overline{\mathbf{B}\mathcal{F}_\tau}$ holds if and only if $\sigma\leq\tau$.
\end{theorem}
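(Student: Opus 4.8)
The plan is to reduce every assertion to the classical Bruhat decomposition of the finite-dimensional flag varieties $\mathrm{Fl}(\mathcal{F},E_n)$ and then pass to the direct limit using Proposition \ref{proposition-3-3-3}. Write $B_n:=\mathbf{B}\cap G(E_n)$; since $\mathbf{B}$ is a splitting Borel subgroup, each $B_n$ is a Borel subgroup of $G(E_n)$ and $\mathbf{B}=\bigcup_n B_n$. For a fixed $\sigma=\sigma_0\circ w^{-1}\in\mathbf{W}(E)\cdot\sigma_0$ I would first observe that the coordinate flags stabilize: because $w$ fixes all but finitely many basis vectors, there is an $n_0$ with $\sigma=\sigma_0$ on $E\setminus E_n$ for all $n\ge n_0$, and then the embedding $\iota_n$ of (\ref{rel8-newnew}) satisfies $\iota_n(\mathcal{F}_\sigma|_{E_n})=\mathcal{F}_\sigma|_{E_{n+1}}$. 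Since every $b\in B_n$ fixes $E\setminus E_n$ pointwise and preserves $\langle E_n\rangle$, the same computation shows that $B_n\mathcal{F}_\sigma$ corresponds under $\iota_n$ to the classical Schubert cell $C_n(\sigma):=B_n\cdot(\mathcal{F}_\sigma|_{E_n})\subset\mathrm{Fl}(\mathcal{F},E_n)$, with $\iota_n(C_n(\sigma))=C_{n+1}(\sigma)\cap\iota_n(\mathrm{Fl}(\mathcal{F},E_n))$. This yields the key identification
\[
\mathbf{B}\mathcal{F}_\sigma=\bigcup_{n\ge n_0}B_n\mathcal{F}_\sigma,
\]
exhibiting $\mathbf{B}\mathcal{F}_\sigma$ as the direct limit of the cells $C_n(\sigma)$, from which the geometric statements will follow.

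For parts (a) and (b) the central object is the invariant $\sigma_\mathcal{G}$. I would first check that it is constant on $\mathbf{B}$-orbits: every $g\in\mathbf{B}$ preserves each subspace $F'_{0,e},F''_{0,e}$ of the reference flag $\mathcal{F}_0$, so that $gG''_\alpha\cap F''_{0,e}=g(G''_\alpha\cap F''_{0,e})$ and likewise for $F'_{0,e}$, whence $\sigma_{g\mathcal{G}}=\sigma_\mathcal{G}$. A direct computation with diagonal flags then gives $\sigma_{\mathcal{F}_\sigma}=\sigma$ for every $\sigma\in\mathbf{W}(E)\cdot\sigma_0$: the relevant intersections are spanned by basis vectors, and $F''_{\sigma,\alpha}\cap F''_{0,e}$ differs from $F''_{\sigma,\alpha}\cap F'_{0,e}$ exactly when $e$ itself lies in the former, i.e.\ when $\sigma(e)\preceq_A\alpha$, so the minimal such $\alpha$ is $\sigma(e)$. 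Comparing with the classical theory, $\sigma_\mathcal{G}$ is precisely the relative-position datum of $\mathcal{G}|_{E_n}$ with respect to $\mathcal{F}_0|_{E_n}$ for $n$ large; the finite Bruhat decomposition then places $\mathcal{G}|_{E_n}$ in $C_n(\sigma_\mathcal{G})$, and by the key identification this means $\mathcal{G}\in\mathbf{B}\mathcal{F}_{\sigma_\mathcal{G}}$. Since $\sigma_\mathcal{G}$ is $\mathbf{B}$-invariant and equals $\sigma$ on $\mathbf{B}\mathcal{F}_\sigma$, the orbits for distinct $\sigma$ are disjoint and exhaust $\mathbf{Fl}(\mathcal{F},E)$, which is (a), while the equivalence $\mathcal{G}\in\mathbf{B}\mathcal{F}_\sigma\Leftrightarrow\sigma_\mathcal{G}=\sigma$ is (b). One must also confirm that $\mathcal{F}_{\sigma_\mathcal{G}}$ is $E$-commensurable with $\mathcal{F}$, so that $\sigma_\mathcal{G}\in\mathbf{W}(E)\cdot\sigma_0$; this follows from the commensurability of $\mathcal{G}$ with $\mathcal{F}$ together with Lemma \ref{lemma-2-new}.

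For part (c) each classical cell $C_n(\sigma)$ is isomorphic to an affine space via the standard unipotent parametrization, of dimension equal to the number of inversions of $\sigma$ lying inside $E_n$, namely $|\{(e,e')\in E_n\times E_n:e\prec_\mathbf{B}e',\ \sigma(e)\succ_A\sigma(e')\}|$. I would check that the embeddings $\iota_n$ realize these as inclusions of affine subspaces $\mathbb{A}^{N_n}\hookrightarrow\mathbb{A}^{N_{n+1}}$, which follows from the explicit ``direct sum with a fixed tail'' form of $\iota_n$. As $n\to\infty$ the dimensions $N_n$ increase to $n_\mathrm{inv}(\sigma)$, so Example \ref{example1}\,{\rm (a)} identifies the direct limit $\mathbf{B}\mathcal{F}_\sigma$ with $\mathbb{A}^{n_\mathrm{inv}(\sigma)}$, finite- or infinite-dimensional according to whether $n_\mathrm{inv}(\sigma)$ is finite or infinite; local closedness is inherited from the local closedness of each $C_n(\sigma)$.

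Finally, for part (d) I would combine the finite-dimensional closure relations with a combinatorial limit argument. In each $\mathrm{Fl}(\mathcal{F},E_n)$ one has the classical fact $\overline{C_n(\tau)}=\bigsqcup_{\sigma'\le_n\tau}C_n(\sigma')$ for the Bruhat order $\le_n$, together with the geometric compatibility $\overline{C_n(\tau)}=\overline{C_{n+1}(\tau)}\cap\iota_n(\mathrm{Fl}(\mathcal{F},E_n))$. Taking unions over $n$ and using that a subset of an ind-variety is closed iff it meets each $\mathrm{Fl}(\mathcal{F},E_n)$ in a closed set, I would obtain $\overline{\mathbf{B}\mathcal{F}_\tau}=\bigcup_n\overline{C_n(\tau)}$, so that $\mathbf{B}\mathcal{F}_\sigma\subset\overline{\mathbf{B}\mathcal{F}_\tau}$ if and only if $\sigma\le_n\tau$ for all sufficiently large $n$. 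The remaining task, which I expect to be the main obstacle, is the purely combinatorial identification of the order $\le$ defined by chains of transpositions $t_{e,e'}$ (with $e\prec_\mathbf{B}e'$ and $\sigma(e)\prec_A\sigma(e')$) with this stable limit of the finite Bruhat orders. The delicate points are that $\mathbf{B}$ need not be conjugate into $\mathbf{P}$ and that $n_\mathrm{inv}$ may be infinite, so the covering relations $\hat<$ must be shown to interact correctly with truncation to $E_n$; establishing that $\sigma\le\tau$ holds globally exactly when it holds in all sufficiently large $\mathrm{Fl}(\mathcal{F},E_n)$ is the crux of the closure statement.
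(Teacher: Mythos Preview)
Your approach is essentially the paper's: reduce to the finite-dimensional Bruhat decomposition of $\mathrm{Fl}(\mathcal{F},E_n)$ (packaged in the paper as Proposition~\ref{proposition-11-new}) and pass to the limit. Your treatment of (a)--(c) matches the paper's proof almost line by line, including the computation $\sigma_{\mathcal{F}_\sigma}=\sigma$ and the use of Proposition~\ref{proposition-11-new}\,{\rm (d)} for the affine-subspace compatibility of the embeddings.

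The one place where you create unnecessary difficulty is part~(d). The ``main obstacle'' you anticipate---identifying the global order $\leq$ with the stabilized finite Bruhat orders---is in fact immediate once one observes, as the paper does via Proposition~\ref{proposition-11-new}\,{\rm (c)}, that the order $\leq$ of Section~\ref{section-4-1} \emph{restricted to any finite level} already coincides with the classical Bruhat order on $\mathrm{Fl}(\mathcal{F},E_n)$. Given this, both directions are trivial: a chain $\sigma=\tau_0\hat{<}\cdots\hat{<}\tau_k=\tau$ involves only finitely many elements of $E$, so lives in some $E_n$ and gives $B_n\mathcal{F}_\sigma\subset\overline{B_n\mathcal{F}_\tau}$; conversely $\mathcal{F}_\sigma\in\overline{\mathbf{B}\mathcal{F}_\tau}$ forces $\mathcal{F}_\sigma\in\overline{B_n\mathcal{F}_\tau}$ for large $n$, and the resulting finite chain of transpositions is already a global chain. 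There is no subtle interaction with truncation to manage.
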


\subsection{Decomposition of $\mathbf{Fl}(\mathcal{F},\omega,E)$}

\label{section-4-2}

In this section the basis  $E$ is  $\omega$-isotropic with corresponding involution $i_E:E\to E$ (see Section \ref{section-3-2}).
Let $\mathbf{P}\subset\mathbf{G}^\omega(E)$ be a splitting parabolic subgroup containing $\mathbf{H}^\omega(E)$,
or equivalently let $\mathcal{F}$ be an $\omega$-isotropic generalized flag compatible with $E$ (see Proposition \ref{proposition-3-new}).
Let $\mathbf{B}\subset\mathbf{G}^\omega(E)$ be a splitting Borel subgroup containing $\mathbf{H}^\omega(E)$.
We study the decomposition of the ind-variety $\mathbf{G}^\omega(E)/\mathbf{P}\cong \mathbf{Fl}(\mathcal{F},\omega,E)$ into $\mathbf{B}$-orbits.

Let $(A,\preceq_A,i_A)$ be a totally ordered set with involutive anti-automorphism
$i_A$, such that $\mathcal{F}\in \mathbf{Fl}_A^\omega(V)$. We denote by $\mathfrak{S}^\omega(E,A)$ the set of surjective maps $\sigma:E\to A$ such that $\sigma(i_E(e))=i_A(\sigma(e))$ for all $e\in E$. By Lemma \ref{lemma-1-new}, $\{\mathcal{F}_\sigma:\sigma\in\mathfrak{S}^\omega(E,A)\}$ are exactly the elements of $\mathbf{Fl}_A^\omega(V)$ compatible with $E$
(the notation $\mathcal{F}_\sigma$ is introduced in (\ref{relation-8})).
Let $\sigma_0\in\mathfrak{S}^\omega(E,A)$ be such that $\mathcal{F}=\mathcal{F}_{\sigma_0}$.

The group $\mathbf{W}^\omega(E)$ is defined as the group of permutations $w:E\to E$ such that $w(e)=e$ for all but finitely many $e\in E$ and $w(i_E(e))=i_E(w(e))$ for all $e\in E$. Note that  $\mathbf{W}^\omega(E)$ acts on the set $\mathfrak{S}^\omega(E,A)$ by the procedure $(w,\sigma)\mapsto \sigma\circ w^{-1}$. Let $\mathbf{W}_\mathbf{P}^\omega(E)$ be the subgroup of elements $w\in\mathbf{W}^\omega(E)$ such that $\sigma_0\circ w^{-1}=\sigma_0$ and let $\mathbf{W}^\omega(E)\cdot \sigma_0:=\{\sigma_0\circ w^{-1}:w\in\mathbf{W}^\omega(E)\}$ be the $\mathbf{W}^\omega(E)$-orbit of $\sigma_0$.

\begin{lemma}
\label{lemma-3}
The map $w\mapsto \mathcal{F}_{\sigma_0\circ w^{-1}}$ induces a bijection between $\mathbf{W}^\omega(E)/\mathbf{W}_\mathbf{P}^\omega(E)$ and the set of $E$-compatible elements of $\mathbf{Fl}(\mathcal{F},\omega,E)$.
\end{lemma}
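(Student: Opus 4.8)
The plan is to separate the statement into an orbit--stabilizer reduction, an injectivity remark, and one genuine equivalence, isolating the isotropy as the only new ingredient beyond Lemma \ref{lemma-2-new}. First I would note that $\mathbf{W}_\mathbf{P}^\omega(E)$ is by definition the stabilizer of $\sigma_0$ for the action $(w,\sigma)\mapsto\sigma\circ w^{-1}$ of $\mathbf{W}^\omega(E)$ on $\mathfrak{S}^\omega(E,A)$, so $w\mapsto\sigma_0\circ w^{-1}$ descends to a bijection $\mathbf{W}^\omega(E)/\mathbf{W}_\mathbf{P}^\omega(E)\to\mathbf{W}^\omega(E)\cdot\sigma_0$. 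Next, $\sigma\mapsto\mathcal{F}_\sigma$ is injective on $\mathfrak{S}^\omega(E,A)$, since from (\ref{relation-8}) one reads off that $\sigma(e)$ is the unique $\alpha\in A$ with $e\in F''_{\sigma,\alpha}\setminus F'_{\sigma,\alpha}$. As Lemma \ref{lemma-1-new} identifies the $E$-compatible elements of $\mathbf{Fl}_A^\omega(V)$ with the family $\{\mathcal{F}_\sigma:\sigma\in\mathfrak{S}^\omega(E,A)\}$, the lemma reduces to the equivalence
\[
\mathcal{F}_\sigma\in\mathbf{Fl}(\mathcal{F},\omega,E)\iff\sigma\in\mathbf{W}^\omega(E)\cdot\sigma_0,\qquad\sigma\in\mathfrak{S}^\omega(E,A).
\]
The implication ``$\Leftarrow$'' is immediate from homogeneity: a representative $\dot w\in\mathbf{G}^\omega(E)$ of the Weyl element $w$ satisfies $\dot w\,\mathcal{F}_{\sigma_0}=\mathcal{F}_{\sigma_0\circ w^{-1}}$ (the diagonal ambiguity in $\dot w$ does not affect the associated flag, which is spanned by basis vectors), whence $\mathcal{F}_\sigma=\dot w\,\mathcal{F}\in\mathbf{G}^\omega(E)\mathcal{F}=\mathbf{Fl}(\mathcal{F},\omega,E)$ by Proposition \ref{proposition-3-new}(c).

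For ``$\Rightarrow$'' I would first run the type $A$ analysis underlying Lemma \ref{lemma-2-new}. Suppose $\mathcal{F}_\sigma$ is $E$-commensurable with $\mathcal{F}_{\sigma_0}$; enlarging the subspace $U$ of Definition \ref{definition-3-new}(b) I may take $U=\langle I\rangle$ for a finite $i_E$-stable $I\subset E$ containing the fixed point of $i_E$, and I may take the order isomorphism $\phi$ to match the $A$-labelings (any realizing isomorphism agrees with this one up to a combinatorial rigidity argument). Since all the relevant subspaces are spanned by subsets of $E$, the condition $F'_{\sigma,\alpha}+\langle I\rangle=F'_{\sigma_0,\alpha}+\langle I\rangle$ reads $\{e\notin I:\sigma(e)\prec_A\alpha\}=\{e\notin I:\sigma_0(e)\prec_A\alpha\}$, and letting $\alpha$ range over $A$, together with the fact that a point of a total order is determined by its strict up-set, this forces $\sigma(e)=\sigma_0(e)$ for all $e\in E\setminus I$. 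The dimension equalities in Definition \ref{definition-3-new}(b), applied to both $F'$ and $F''$, give $|\sigma^{-1}(\alpha)\cap I|=|\sigma_0^{-1}(\alpha)\cap I|$ for every $\alpha\in A$.

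It then remains to build a finite permutation $w$ commuting with $i_E$ with $w(\sigma_0^{-1}(\alpha))=\sigma^{-1}(\alpha)$ for all $\alpha$, i.e. $\sigma=\sigma_0\circ w^{-1}$; this is the main obstacle and the only place isotropy enters. Off $I$ I set $w=\mathrm{id}$ (legitimate since $\sigma=\sigma_0$ there); on $I$ I must choose bijections $b_\alpha:\sigma_0^{-1}(\alpha)\cap I\to\sigma^{-1}(\alpha)\cap I$ of equinumerous finite sets, subject to the equivariance $b_{i_A(\alpha)}\circ i_E=i_E\circ b_\alpha$ needed for $w\in\mathbf{W}^\omega(E)$. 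Using the decomposition $A=A_\ell\sqcup A_0\sqcup A_r$, I choose $b_\alpha$ freely for $\alpha\in A_\ell$ and set $b_{i_A(\alpha)}:=i_E\circ b_\alpha\circ i_E^{-1}$; this is well defined onto $\sigma^{-1}(i_A(\alpha))\cap I$ because $\sigma\circ i_E=i_A\circ\sigma$, $\sigma_0\circ i_E=i_A\circ\sigma_0$ and $I$ is $i_E$-stable. The delicate case is the fixed index $\alpha_0\in A_0$, occurring only in type B: here $i_E$ restricts to involutions of $\sigma_0^{-1}(\alpha_0)\cap I$ and $\sigma^{-1}(\alpha_0)\cap I$, and I need an $i_E$-equivariant bijection between them. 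This exists because $i_E$ has at most one fixed point in $E$, namely the distinguished vector, which lies in both sets by the convention on $I$; hence each of these equinumerous finite $i_E$-sets has exactly one fixed point and the same number of transpositions, so matching fixed points and pairing transpositions produces $b_{\alpha_0}$. The permutation $w:=\mathrm{id}_{E\setminus I}\sqcup\bigsqcup_\alpha b_\alpha$ then lies in $\mathbf{W}^\omega(E)$ and satisfies $\sigma=\sigma_0\circ w^{-1}$, completing the equivalence and hence the proof.
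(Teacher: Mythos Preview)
Your argument is essentially correct but takes a different route from the paper. The paper's proof is a two-line reduction to the finite-dimensional case: given an $E$-compatible $\mathcal{G}\in\mathbf{Fl}(\mathcal{F},\omega,E)$, choose $n$ with $\mathcal{G}\in\mathrm{Fl}(\mathcal{F},\omega,E_n)$; then $\mathcal{G}$ is fixed by the maximal torus $H^\omega(E_n)$, and Proposition~\ref{proposition-12-new}(a) supplies $w\in W^\omega(E_n)\subset\mathbf{W}^\omega(E)$ with $\mathcal{G}=\mathcal{F}_{\sigma_0\circ w^{-1}}$. You instead unpack Definition~\ref{definition-3-new} directly, deduce that $\sigma$ and $\sigma_0$ agree off a finite $i_E$-stable set $I$ with matching fiber cardinalities over each $\alpha$, and then hand-build the $i_E$-equivariant permutation $w$ fiber by fiber. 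Your approach is self-contained and avoids the finite-dimensional Schubert input; the paper's is much shorter precisely because it leverages that input via the exhaustion.

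One point deserves care: your reduction to the displayed equivalence, and the injectivity of $\sigma\mapsto\mathcal{F}_\sigma$, implicitly treat elements of $\mathbf{Fl}^\omega_A(V)$ as $A$-labeled families rather than bare chains of subspaces. When $(A,\preceq_A)$ has a nontrivial order automorphism (say $A=\mathbb{Z}$ with $\sigma_0=\mathrm{id}$ and $\sigma$ the shift), the same chain can arise as $\mathcal{F}_\sigma$ for several $\sigma$'s, and your equivalence fails as stated. The repair is easy and in the spirit of your ``rigidity'' remark: given $\mathcal{G}$ $E$-commensurable with $\mathcal{F}$, the isomorphism $\phi$ transports the $A$-labeling of $\mathcal{F}$ to one particular labeling of $\mathcal{G}$; take $\sigma$ to be the surjection associated to \emph{that} labeling, and then your computation $\sigma|_{E\setminus I}=\sigma_0|_{E\setminus I}$ and the fiber-cardinality match go through verbatim. (A smaller remark: $A_0\neq\emptyset$ can also occur in type~C, e.g.\ for $\mathcal{F}=\{0\subset F\subset F^\perp\subset V\}$; but there $i_E$ has no fixed point, so the equivariant bijection over $\alpha_0$ is immediate and your treatment of the ``delicate case'' already covers what matters.)
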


The splitting Borel subgroup $\mathbf{B}$ is the subgroup $\mathbf{B}=\mathbf{P}_{\mathcal{F}_0}^\omega$ of elements preserving some maximal $\omega$-isotropic generalized flag $\mathcal{F}_0$ compatible with $E$. We can write $\mathcal{F}_0=\{F'_{0,e},F''_{0,e}:e\in E\}$ with $F'_{0,e}=\langle e'\in E:e'\prec_\mathbf{B}e\rangle$
and $F''_{0,e}=\langle e'\in E:e'\preceq_\mathbf{B}e\rangle$, where $\preceq_\mathbf{B}$ is a total order on $E$. Moreover, the fact that $\mathcal{F}_0$ is $\omega$-isotropic implies that the involution $i_E:E\to E$ is an anti-automorphism of the ordered set $(E,\preceq_\mathbf{B})$.

{\it Number of inversions $n_\mathrm{inv}^\omega(\sigma)$.} Let $\sigma\in\mathfrak{S}^\omega(E,A)$. We define an {\em $\omega$-isotropic inversion} of $\sigma$ as a pair $(e,e')\in E\times E$ such that
\[e\prec_\mathbf{B}e',\quad e\prec_\mathbf{B}i_E(e),\quad e'\not=i_E(e'),\ \mbox{ and }\ \sigma(e)\succ_A\sigma(e').\]
Let
\[n_\mathrm{inv}^\omega(\sigma)=|\{(e,e')\in E\times E:\mbox{$(e,e')$ is an $\omega$-isotropic inversion of $\sigma$}\}|.\]

{\em Partial order $\leq_\omega$ on $\mathfrak{S}^\omega(E,A)$.}
Given $(e,e')\in E\times E$ with $e\not=e'$, $i_E(e)\not=e$, $i_E(e')\not=e'$, we set
\[t_{e,e'}^\omega=t_{e,e'}\circ t_{i_E(e),i_E(e')}\mbox{ if }e'\not=i_E(e),\quad t_{e,e'}^\omega=t_{e,e'}\mbox{ if }e'=i_E(e).\]
Thus $t_{e,e'}^\omega\in\mathbf{W}^\omega(E)$. Let $\sigma,\tau\in\mathfrak{S}^\omega(E,A)$.
We set $\sigma\hat{<}_\omega\tau$ if $\tau=\sigma\circ t^\omega_{e,e'}$ for a pair $(e,e')$ satisfying
$e\prec_\mathbf{B}e'$ and $\sigma(e)\prec_A\sigma(e')$.
Finally we set $\sigma<_\omega\tau$ if there is a chain $\tau_0=\sigma\hat<_\omega\tau_1\hat<_\omega\tau_2\hat<_\omega\ldots\hat<_\omega\tau_k=\tau$ of elements of $\mathfrak{S}^\omega(E,A)$.

\begin{theorem}
\label{theorem-2}
Let $\mathbf{P}_\mathcal{F}^\omega$ be the splitting parabolic subgroup of $\mathbf{G}^\omega(E)$ containing $\mathbf{H}^\omega(E)$, and corresponding to an $E$-compatible generalized flag $\mathcal{F}=\mathcal{F}_{\sigma_0}\in\mathbf{Fl}^\omega_A(V)$ (with $\sigma_0\in\mathfrak{S}^\omega(E,A)$).\ Let $\mathbf{B}$ be any splitting Borel subgroup of $\mathbf{G}^\omega(E)$ containing $\mathbf{H}^\omega(E)$. \\
{\rm (a)} We have the decomposition
\[\mathbf{G}^\omega(E)/\mathbf{P}_\mathcal{F}^\omega=\mathbf{Fl}(\mathcal{F},\omega,E)=\bigsqcup_{\sigma\in\mathbf{W}^\omega(E)\cdot\sigma_0}\mathbf{B}\mathcal{F}_\sigma=\bigsqcup_{w\in \mathbf{W}^\omega(E)/\mathbf{W}^\omega_\mathbf{P}(E)}\mathbf{B}\mathcal{F}_{\sigma_0\circ w^{-1}}.\]
{\rm (b)}
For $\mathcal{G}\in\mathbf{Fl}(\mathcal{F},\omega,E)$ the map $\sigma_\mathcal{G}:E\to A$ (see (\ref{definition-sigmaG}))
belongs to $\mathbf{W}^\omega(E)\cdot\sigma_0$.
Moreover, $\mathcal{G}$ belongs to $\mathbf{B}\mathcal{F}_\sigma$ ($\sigma\in\mathbf{W}^\omega(E)\cdot\sigma_0$) if and only if $\sigma_\mathcal{G}=\sigma$. \\
{\rm (c)} The orbit $\mathbf{B}\mathcal{F}_\sigma$ ($\sigma\in\mathbf{W}^\omega(E)\cdot\sigma_0$) is a locally closed ind-subvariety of $\mathbf{Fl}(\mathcal{F},\omega,E)$ isomorphic to the affine space $\mathbb{A}^{n^\omega_\mathrm{inv}(\sigma)}$ (possibly infinite-dimensional). \\
{\rm (d)} For $\sigma,\tau\in\mathbf{W}^\omega(E)\cdot\sigma_0$, the inclusion $\mathbf{B}\mathcal{F}_\sigma\subset\overline{\mathbf{B}\mathcal{F}_\tau}$ holds if and only if $\sigma\leq_\omega\tau$.
\end{theorem}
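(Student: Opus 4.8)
The plan is to establish all four statements by combining the classical Bruhat decomposition of the finite-dimensional isotropic flag varieties $\mathrm{Fl}(\mathcal{F},\omega,E_n)$ with the closed embedding $\mathbf{Fl}(\mathcal{F},\omega,E)\hookrightarrow\mathbf{Fl}(\mathcal{F},E)$ of Proposition \ref{proposition-3-3-2}, which lets me transport much of Theorem \ref{theorem-1} to the isotropic setting. Throughout I fix the exhaustion $\mathbf{G}^\omega(E)=\bigcup_n G^\omega(E_n)$ and write $B_n:=\mathbf{B}\cap G^\omega(E_n)$ and $P_n:=\mathbf{P}_\mathcal{F}^\omega\cap G^\omega(E_n)$; by the standing assumptions on the filtration each $B_n$ is a Borel and each $P_n$ a parabolic subgroup of the reductive group $G^\omega(E_n)$, so that $\mathrm{Fl}(\mathcal{F},\omega,E_n)\cong G^\omega(E_n)/P_n$ carries its usual finite Bruhat decomposition indexed by $W^\omega(E_n)/(W^\omega(E_n)\cap P_n)$.

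First I would prove (a) and (b) together. The total order $\preceq_\mathbf{B}$ on $E$ determines, as in Remark \ref{remark-4}, a splitting Borel subgroup $\tilde{\mathbf{B}}\subset\mathbf{G}(E)$ with $\mathbf{B}=\tilde{\mathbf{B}}\cap\mathbf{G}^\omega(E)$, and the relative-position map $\sigma_\mathcal{G}$ of (\ref{definition-sigmaG}) depends only on $\mathcal{F}_0$; hence it coincides with the one computed in the ambient $\mathbf{Fl}(\mathcal{F},E)$. Thus Theorem \ref{theorem-1}(b) already gives $\sigma_\mathcal{G}\in\mathbf{W}(E)\cdot\sigma_0$ for every $\mathcal{G}\in\mathbf{Fl}(\mathcal{F},\omega,E)$; since such $\mathcal{G}$ is $\omega$-isotropic, Lemma \ref{lemma-1-new}(a) forces $\sigma_\mathcal{G}\circ i_E=i_A\circ\sigma_\mathcal{G}$, so $\sigma_\mathcal{G}\in\mathfrak{S}^\omega(E,A)$, and combined with Lemma \ref{lemma-3} this places $\sigma_\mathcal{G}$ in $\mathbf{W}^\omega(E)\cdot\sigma_0$. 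Because $\sigma_\mathcal{G}$ is constant on $\mathbf{B}$-orbits and satisfies $\sigma_{\mathcal{F}_\sigma}=\sigma$, I would verify at each finite level that $\sigma_\mathcal{G}$ is a complete invariant of the $B_n$-orbit, so that the fiber $\{\mathcal{G}:\sigma_\mathcal{G}=\sigma\}$ in $\mathrm{Fl}(\mathcal{F},\omega,E_n)$ is exactly the $B_n$-orbit of $\mathcal{F}_\sigma$; since these orbits are nested under the embeddings $\iota^\omega_n$, passing to the limit yields both the decomposition (a) and the characterization (b).

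For (c), the essential point is that the $B_n$-orbit of $\mathcal{F}_\sigma$ in $\mathrm{Fl}(\mathcal{F},\omega,E_n)$ is an affine cell whose dimension equals the number of those $\omega$-isotropic inversions of $\sigma$ that involve only indices in $E_n$; this is precisely the classical length of the minimal coset representative in the Weyl group of type B, C or D, rewritten through the dictionary of Remark \ref{remark-4} in terms of $\preceq_\mathbf{B}$ and $i_E$. Matching my combinatorial count $n^\omega_\mathrm{inv}(\sigma)$ — with its constraints $e\prec_\mathbf{B}i_E(e)$ and $e'\neq i_E(e')$ — against the geometric dimension of the isotropic Schubert cell is the part that must be done by hand, and I expect it to be the main obstacle: one has to account correctly for the barred (signed) inversions peculiar to the hyperoctahedral group and, in type D, for the parity subtlety distinguishing it from type B/C. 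Granting this, the cell dimensions increase to $n^\omega_\mathrm{inv}(\sigma)$, each $\iota^\omega_n$ embeds one affine cell as an affine subspace of the next, and Example \ref{example1}(a) identifies the direct limit with $\mathbb{A}^{n^\omega_\mathrm{inv}(\sigma)}$, finite- or infinite-dimensional according to whether $n^\omega_\mathrm{inv}(\sigma)$ is finite.

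Finally, for (d) I would argue that the closure of $\mathbf{B}\mathcal{F}_\tau$ in the ind-variety is the union over $n$ of the Zariski closures of the finite cells $B_n\mathcal{F}_\tau$ in $\mathrm{Fl}(\mathcal{F},\omega,E_n)$. At each level the classical result describes this closure by the Bruhat order on $W^\omega(E_n)/(W^\omega(E_n)\cap P_n)$, and the elements $t^\omega_{e,e'}$ generating the covering relation $\hat{<}_\omega$ are exactly the reflections realizing the covers of that order; hence $\mathbf{B}\mathcal{F}_\sigma\subset\overline{\mathbf{B}\mathcal{F}_\tau}$ at a fixed level iff $\sigma\leq_\omega\tau$ with all moves supported on that level. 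The remaining work is to check that the level-wise Bruhat orders are compatible with the embeddings and that their union is exactly $\leq_\omega$, so that the closure relation passes to the limit; this is routine once the covering relations are identified, using that any chain witnessing $\sigma<_\omega\tau$ moves only finitely many basis vectors and hence lives in some $E_n$.
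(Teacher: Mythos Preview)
Your approach is essentially the paper's: reduce to the finite-dimensional Bruhat decomposition of $\mathrm{Fl}(\mathcal{F},\omega,E_n)$ (packaged there as Proposition~\ref{proposition-12-new}) and pass to the limit, exactly parallel to the proof of Theorem~\ref{theorem-1}. Two points deserve correction.

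First, your invocation of Lemma~\ref{lemma-1-new}(a) to conclude $\sigma_\mathcal{G}\circ i_E=i_A\circ\sigma_\mathcal{G}$ is misplaced: that lemma concerns the surjection $\sigma$ making an $\omega$-isotropic basis $L$ compatible with an isotropic flag, whereas $\sigma_\mathcal{G}$ is a relative-position function with respect to $\mathcal{F}_0$, and a general $\mathcal{G}\in\mathbf{Fl}(\mathcal{F},\omega,E)$ is \emph{not} compatible with $E$. The detour through the embedding into $\mathbf{Fl}(\mathcal{F},E)$ and Theorem~\ref{theorem-1} is in any case unnecessary: the finite-level isotropic Bruhat decomposition (Proposition~\ref{proposition-12-new}(a)) already gives $\mathcal{G}\in B^\omega(E_n)\mathcal{F}_{\sigma_0\circ w^{-1}}$ for some $w\in W^\omega(E_n)\subset\mathbf{W}^\omega(E)$, and then the computation in the proof of Theorem~\ref{theorem-1}(b) yields $\sigma_\mathcal{G}=\sigma_0\circ w^{-1}\in\mathbf{W}^\omega(E)\cdot\sigma_0$ directly. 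That is the route the paper takes, and it sidesteps the gap.

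Second, your anticipated obstacle in (c) about the type~D parity subtlety does not arise: the paper's standing convention (end of Section~\ref{section-2-2}) is that when $\omega$ is symmetric the basis $E$ is taken of type~B and every $E_n$ contains the $i_E$-fixed point, precisely so that each $\mathrm{Fl}(\mathcal{F},\omega,E_n)$ is connected and $G^\omega(E_n)$-homogeneous. The dimension count is then the standard one for types B/C, recorded as Proposition~\ref{proposition-12-new}(b).
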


\subsection{Bruhat decomposition of the ind-group $\mathbf{G}=\mathbf{G}(E)$ or $\mathbf{G}^\omega(E)$}

\label{section-4-3}

Let $\mathbf{H}=\mathbf{H}(E)$ or $\mathbf{H}^\omega(E)$, and let $\mathbf{W}=\mathbf{W}(E)$ or $\mathbf{W}^\omega(E)$.
If $\mathbf{W}=\mathbf{W}(E)$, the linear extension of $w\in\mathbf{W}$ is an element $\hat{w}\in\mathbf{G}(E)$. If $\mathbf{W}=\mathbf{W}^\omega(E)$, we can find scalars $\lambda_e\in\mathbb{K}^*$ ($e\in E$) such that the map $e\mapsto \lambda_e w(e)$ linearly extends to an element $\hat{w}\in\mathbf{G}^\omega(E)$. In both situations it is easy to deduce that $\mathbf{W}$ is isomorphic to the quotient $N_\mathbf{G}(\mathbf{H})/\mathbf{H}$.

Given a splitting parabolic subgroup $\mathbf{P}\subset\mathbf{G}$ containing $\mathbf{H}$, we denote by $\mathbf{W}_\mathbf{P}$ the corresponding subgroup of $\mathbf{W}$. The following statement describes the decomposition of the ind-group $\mathbf{G}$ into double cosets. It is a consequence of Theorems \ref{theorem-1}--\ref{theorem-2}.

\begin{corollary}
\label{corollary-1-Bruhat}
Let $\mathbf{G}$ be one of the ind-groups $\mathbf{G}(E)$ or $\mathbf{G}^\omega(E)$, and let $\mathbf{P}$ and $\mathbf{B}$ be respectively a splitting parabolic and a splitting subgroup of $\mathbf{G}$ containing $\mathbf{H}$. Then we have a decomposition
\[\mathbf{G}=\bigsqcup_{w\in\mathbf{W}/\mathbf{W}_\mathbf{P}}\mathbf{B}\hat{w}\mathbf{P}.\]
\end{corollary}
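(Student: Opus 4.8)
The plan is to deduce the double coset decomposition from the orbit decompositions of Theorems \ref{theorem-1} and \ref{theorem-2} via the standard correspondence between $\mathbf{B}$-orbits on $\mathbf{G}/\mathbf{P}$ and $(\mathbf{B},\mathbf{P})$-double cosets in $\mathbf{G}$. Let $\pi:\mathbf{G}\to\mathbf{G}/\mathbf{P}$ be the projection $g\mapsto g\mathbf{P}$. For any point $x=g_0\mathbf{P}$, a direct set-theoretic computation gives $\pi^{-1}(\mathbf{B}\cdot x)=\mathbf{B}g_0\mathbf{P}$: the preimage of the $\mathbf{B}$-orbit through $x$ is exactly the double coset containing $g_0$. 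Since $\pi$ is surjective and the $\mathbf{B}$-orbits partition $\mathbf{G}/\mathbf{P}$, pulling back along $\pi$ shows that the double cosets partition $\mathbf{G}$ and are in bijection with the $\mathbf{B}$-orbits on $\mathbf{G}/\mathbf{P}$.

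It remains to identify the orbit representatives. Writing $\mathbf{P}=\mathbf{P}_\mathcal{F}$ (resp. $\mathbf{P}_\mathcal{F}^\omega$) with $\mathcal{F}=\mathcal{F}_{\sigma_0}$, the isomorphism $\mathbf{G}/\mathbf{P}\cong\mathbf{Fl}(\mathcal{F},E)$ of Proposition \ref{proposition-2} (resp. $\mathbf{G}/\mathbf{P}\cong\mathbf{Fl}(\mathcal{F},\omega,E)$ of Proposition \ref{proposition-3-new}) sends $g\mathbf{P}$ to $g\mathcal{F}$; in particular the base coset $\mathbf{P}$ goes to $\mathcal{F}_{\sigma_0}$. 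For $w\in\mathbf{W}$ with linear extension $\hat{w}$, applying $\hat{w}$ to the spanning description (\ref{relation-8}) and reindexing $e'=w(e)$ yields $\hat{w}F''_{\sigma_0,\alpha}=\langle e':(\sigma_0\circ w^{-1})(e')\preceq_A\alpha\rangle=F''_{\sigma_0\circ w^{-1},\alpha}$, and similarly for $F'$, so that $\hat{w}\mathcal{F}_{\sigma_0}=\mathcal{F}_{\sigma_0\circ w^{-1}}$ (the scalars occurring in $\hat{w}$ in the isotropic case do not affect these spans). Hence $\hat{w}\mathbf{P}$ corresponds to $\mathcal{F}_{\sigma_0\circ w^{-1}}$, the $\mathbf{B}$-orbit through it is $\mathbf{B}\mathcal{F}_{\sigma_0\circ w^{-1}}$, and by the first paragraph $\pi^{-1}(\mathbf{B}\mathcal{F}_{\sigma_0\circ w^{-1}})=\mathbf{B}\hat{w}\mathbf{P}$.

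Finally, I would check that $\mathbf{B}\hat{w}\mathbf{P}$ depends only on the class $w\mathbf{W}_\mathbf{P}$: if $v\in\mathbf{W}_\mathbf{P}$, then $v$ preserves the fibers of $\sigma_0$, so $\hat{v}$ preserves $\mathcal{F}_{\sigma_0}$ and lies in $\mathbf{P}$, whence $\mathbf{B}\hat{w}\mathbf{P}=\mathbf{B}\hat{w}\hat{v}\mathbf{P}$; since $\hat{w}\hat{v}$ is a lift of $wv$ and any two lifts of an element of $\mathbf{W}$ differ by an element of $\mathbf{H}\subset\mathbf{B}$, the double coset is well defined on $\mathbf{W}/\mathbf{W}_\mathbf{P}$. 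Combining this with Theorem \ref{theorem-1}(a) (resp. \ref{theorem-2}(a)), which asserts that $\{\mathbf{B}\mathcal{F}_{\sigma_0\circ w^{-1}}:w\in\mathbf{W}/\mathbf{W}_\mathbf{P}\}$ is a partition of $\mathbf{G}/\mathbf{P}$ indexed bijectively by $\mathbf{W}/\mathbf{W}_\mathbf{P}$, and pulling back along $\pi$, I obtain the disjoint decomposition $\mathbf{G}=\bigsqcup_{w\in\mathbf{W}/\mathbf{W}_\mathbf{P}}\mathbf{B}\hat{w}\mathbf{P}$. I do not expect a serious obstacle: the assertion is a purely set-theoretic partition of $\mathbf{G}$, so no ind-variety structure on the double cosets needs to be addressed, and all the geometric content is already carried by Theorems \ref{theorem-1} and \ref{theorem-2}; the only points requiring (routine) care are the identity $\hat{w}\mathcal{F}_{\sigma_0}=\mathcal{F}_{\sigma_0\circ w^{-1}}$ and the independence of the double coset from the choices of scalar and of coset representative.
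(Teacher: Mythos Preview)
Your proposal is correct and follows exactly the approach indicated in the paper, which simply states that the corollary is a consequence of Theorems~\ref{theorem-1}--\ref{theorem-2} without spelling out the details. You have supplied the standard bijection between $\mathbf{B}$-orbits on $\mathbf{G}/\mathbf{P}$ and $(\mathbf{B},\mathbf{P})$-double cosets, together with the identification $\hat{w}\mathcal{F}_{\sigma_0}=\mathcal{F}_{\sigma_0\circ w^{-1}}$ and the well-definedness on $\mathbf{W}/\mathbf{W}_\mathbf{P}$, which is precisely what is needed.
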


\begin{remark}
{\rm (a)}
Note that the unique assumption on the splitting parabolic and Borel subgroups $\mathbf{P}$ and $\mathbf{B}$ in Corollary \ref{corollary-1-Bruhat} is that they contain a common splitting Cartan subgroup, in particular it is not required that $\mathbf{B}$ be conjugate to a subgroup of $\mathbf{P}$. \\
{\rm (b)}
The ind-group $\mathbf{G}$ admits a natural exhaustion $\mathbf{G}=\bigcup_{n\geq 1}G_n$ by finite-dimensional subgroups of the form $G_n=G(E_n)$ or $G_n=G^\omega(E_n)$ (see Section \ref{section-2-2}). Moreover, the intersections $P_n:=\mathbf{P}\cap G_n$ and $B_n:=\mathbf{B}\cap G_n$ are respectively a parabolic subgroup and a Borel subgroup of $G_n$, containing a common Cartan subgroup. Then the decomposition of Corollary \ref{corollary-1-Bruhat} can be retrieved by considering usual Bruhat decompositions of the groups $G_n$ into double cosets for $P_n$ and $B_n$.
\end{remark}

\subsection{On the existence of cells of finite or infinite dimension}

\label{section-4-4}

In Theorems \ref{theorem-1}--\ref{theorem-2} it appears that the decomposition of an ind-variety of generalized flags into $\mathbf{B}$-orbits may comprise orbits of infinite dimension.
The following result determines precisely the situations in which infinite-dimensional orbits arise.

\begin{theorem}
\label{theorem-3}
Let $\mathbf{G}$ be one of the groups $\mathbf{G}(E)$ or $\mathbf{G}^\omega(E)$. Let $\mathbf{P},\mathbf{B}\subset\mathbf{G}$ be splitting parabolic and Borel subgroups containing the splitting Cartan subgroup $\mathbf{H}$ of $\mathbf{G}$.  \\
{\rm (a)} The following conditions are equivalent:
\begin{itemize}
\item[\rm (i)] $\mathbf{B}$ is conjugate (under $\mathbf{G}$) to a subgroup of $\mathbf{P}$;
\item[\rm (ii)] At least one $\mathbf{B}$-orbit of $\mathbf{G}/\mathbf{P}$ is finite dimensional; \item[\rm (iii)] One $\mathbf{B}$-orbit of $\mathbf{G}/\mathbf{P}$ is a single point (and this orbit is necessarily unique).
\end{itemize}
{\rm (b)} Let $\preceq_\mathbf{B}$ be the total order on the basis $E$ induced by $\mathbf{B}$. Assume that $\mathbf{P}\not=\mathbf{G}$. The following conditions are equivalent:
\begin{itemize}
\item[\rm (i)] $\mathbf{B}$ is conjugate (under $\mathbf{G}$) to a subgroup of $\mathbf{P}$, and the ordered set $(E,\preceq_\mathbf{B})$ is isomorphic (as ordered set) to a subset of $(\mathbb{Z},\leq)$;
\item[\rm (ii)] Every $\mathbf{B}$-orbit of $\mathbf{G}/\mathbf{P}$ is finite dimensional.
\end{itemize}
\end{theorem}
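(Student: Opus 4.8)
\section*{Proof proposal for Theorem \ref{theorem-3}}

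The plan is to work throughout with the combinatorial model furnished by Theorems \ref{theorem-1} and \ref{theorem-2}, treating the case $\mathbf{G}=\mathbf{G}(E)$ in detail and indicating at the end the changes needed for $\mathbf{G}^\omega(E)$. The two facts I will use constantly are: by Theorem \ref{theorem-1}(c) one has $\dim \mathbf{B}\mathcal{F}_\sigma=n_\mathrm{inv}(\sigma)$, so that an orbit is finite dimensional precisely when $n_\mathrm{inv}(\sigma)<\infty$ and is a single point precisely when $n_\mathrm{inv}(\sigma)=0$; and by the definition of $n_\mathrm{inv}$, the condition $n_\mathrm{inv}(\sigma)=0$ says exactly that $\sigma\colon (E,\preceq_\mathbf{B})\to(A,\preceq_A)$ is order-preserving, a property I shall call \emph{monotone}. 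I also record the description of the orbit $\mathbf{W}(E)\cdot\sigma_0$: two surjections in $\mathfrak{S}(E,A)$ are $\mathbf{W}(E)$-equivalent if and only if they differ on a finite subset $D\subset E$ and induce the same multiset of values on $D$ (one direction is clear from $\tau=\sigma\circ w^{-1}$ with $w$ of finite support, the other by extending a value-matching bijection of $D$ by the identity). Note that equality of the (possibly infinite) fibre cardinalities is \emph{not} sufficient, which is what will make monotone representatives unique.

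For part (a), the equivalence (i)$\Leftrightarrow$(iii) is the geometric heart. A single-point orbit $\mathbf{B}\mathcal{F}_\sigma=\{\mathcal{F}_\sigma\}$ means $\mathbf{B}$ fixes $\mathcal{F}_\sigma$, i.e.\ $\mathbf{B}\subseteq\mathbf{P}_{\mathcal{F}_\sigma}=\hat w\mathbf{P}\hat w^{-1}$, which is the conjugacy in (i); conversely, if $g^{-1}\mathbf{B}g\subseteq\mathbf{P}$ with $g\in\mathbf{G}(E)$, then $\mathbf{B}$ fixes $g\mathcal{F}\in\mathbf{Fl}(\mathcal{F},E)$, and since $\mathbf{H}\subseteq\mathbf{B}$ this fixed flag is $E$-compatible, hence equals some $\mathcal{F}_\sigma$, giving (iii). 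Uniqueness of this point follows from the orbit description: swapping two distinct values in a monotone surjection destroys monotonicity, so a finite-support permutation preserving both monotonicity and the value-multiset must fix every fibre. The implication (iii)$\Rightarrow$(ii) is trivial, so the crux is (ii)$\Rightarrow$(iii): starting from $\sigma$ with $n_\mathrm{inv}(\sigma)<\infty$, I let $S$ be the finite set of basis vectors occurring in some inversion of $\sigma$, and I define $\tau$ by keeping $\sigma$ off $S$ and reassigning the multiset $\{\sigma(s):s\in S\}$ to the positions $s_1\prec_\mathbf{B}\cdots\prec_\mathbf{B} s_k$ of $S$ in $\preceq_A$-increasing order. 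Since off $S$ the map $\sigma$ has no inversions, every off-$S$ vector is correctly placed relative to all of $E$; a short counting (exchange) argument then shows that for each mixed pair the sorted value still lies below, respectively above, the untouched neighbouring values, so $\tau$ is globally monotone. As $\tau$ differs from $\sigma$ only on $S$ and carries the same multiset there, it lies in the orbit, proving (iii). I expect this sorting step to be the main obstacle, precisely because $(E,\preceq_\mathbf{B})$ need not be well-ordered, so I must bound the reassigned values by the envelope set by the untouched vectors rather than by a naive induction on adjacent transpositions.

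For part (b) I first note that $(E,\preceq_\mathbf{B})$ is isomorphic to a subset of $(\mathbb{Z},\leq)$ if and only if every interval $\{e:x\preceq_\mathbf{B} e\preceq_\mathbf{B} y\}$ is finite (a bounded infinite increasing or decreasing sequence, such as occurs in order type $\omega+\omega$, is exactly the obstruction). For (i)$\Rightarrow$(ii), part (a) supplies the monotone $\tau_0$ in the orbit; any $\sigma$ in the orbit equals $\tau_0$ off a finite set $D$, so each inversion of $\sigma$ meets $D$, and for fixed $e\in D$ the partners $e'\notin D$ giving an inversion lie in an initial (or final) segment of the monotone $\tau_0$ intersected with a one-sided ray from $e$, hence in a bounded interval, which is finite by hypothesis; summing over the finite set $D$ gives $n_\mathrm{inv}(\sigma)<\infty$. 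For (ii)$\Rightarrow$(i), condition (ii) yields a finite-dimensional orbit, so by part (a) we again have a monotone $\tau_0$; if some interval $(x,y)$ were infinite, I produce an infinite-dimensional orbit by a single transposition. If $\tau_0$ is nonconstant on $(x,y)$ (so $\tau_0(x)\prec_A\tau_0(y)$), then $\sigma:=\tau_0\circ t_{x,y}$ has, for every $e\in(x,y)$, an inversion $(x,e)$ when $\tau_0(e)\prec_A\tau_0(y)$ and an inversion $(e,y)$ when $\tau_0(e)\succ_A\tau_0(x)$, and these two cases cover all of $(x,y)$, so $n_\mathrm{inv}(\sigma)=\infty$; if instead $\tau_0$ is constant of value $v$ on $(x,y)$, I invoke $\mathbf{P}\neq\mathbf{G}$ to pick $f$ with $\tau_0(f)\neq v$ and swap $f$ with $x$ (if $\tau_0(f)\succ_A v$) or with $y$ (if $\tau_0(f)\prec_A v$), again inverting with the infinitely many $v$-valued positions of $(x,y)$. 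This contradicts (ii), so all intervals are finite, establishing (i). The only delicate point here is the case split governing which endpoint to perturb, which is routine once $\mathbf{P}\neq\mathbf{G}$ guarantees a second value.

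Finally, the isotropic case $\mathbf{G}=\mathbf{G}^\omega(E)$ follows the identical scheme, now invoking Theorem \ref{theorem-2}: the orbits are indexed by $\mathbf{W}^\omega(E)\cdot\sigma_0$ with $\dim\mathbf{B}\mathcal{F}_\sigma=n^\omega_\mathrm{inv}(\sigma)$, ``monotone'' is replaced by ``$i_E$-equivariant and $\preceq_A$-nondecreasing on $\{e:e\prec_\mathbf{B} i_E(e)\}$'', and all rearrangements and transpositions are carried out $i_E$-equivariantly using the elements $t^\omega_{e,e'}$. The orbit description, the sorting argument of (a)(ii)$\Rightarrow$(iii), the interval-finiteness bound of (b)(i)$\Rightarrow$(ii), and the single-transposition construction of (b)(ii)$\Rightarrow$(i) all transfer verbatim after pairing each move with its $i_E$-image, so I will treat this case by indicating these substitutions rather than repeating the arguments.
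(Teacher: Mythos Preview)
Your proposal is correct and follows the same overall architecture as the paper (reducing everything to inversion counts via Theorems \ref{theorem-1}--\ref{theorem-2}), but your execution of the two nontrivial implications is genuinely more direct than the paper's. For (a)(ii)$\Rightarrow$(iii), the paper invokes Proposition \ref{proposition-9-new}, whose proof is an inductive descent: choose $w$ with $m_\mathbf{B}^\mathbf{P}(w)$ minimal, locate a carefully chosen pair $(e'',e')$, and show that a single generator $s_{w(e'),w(e'')}$ strictly decreases $m_\mathbf{B}^\mathbf{P}$. Your one-shot sorting on the finite ``inversion support'' $S$ accomplishes the same thing without the induction; the counting argument you allude to (that at least $k-j+1$ of the $S$-values are $\succeq_A\sigma(e)$, hence the $j$-th smallest is as well) is exactly what makes the mixed pairs work, and it is worth spelling out. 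For (b)(i)$\Rightarrow$(ii), the paper goes through Proposition \ref{proposition-10-new} (expressing $m_\mathbf{B}^\mathbf{P}(w)$ as an infimum of Bruhat lengths $\ell_\mathbf{B}$ over a $\mathbf{W}_\mathbf{P}$-coset) together with Corollary \ref{corollary-8-new}; your bounded-interval estimate, observing that every inversion partner of $e\in D$ lies in the interval between $e$ and $w^{-1}(e)$, is a cleaner route to the same finiteness. For (b)(ii)$\Rightarrow$(i) your single-transposition construction is essentially identical to the paper's. The trade-off is that the paper's auxiliary propositions give a reusable relation between $m_\mathbf{B}^\mathbf{P}$ and $\ell_\mathbf{B}$, while your arguments are self-contained but specific to this theorem.
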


\begin{remark}
{\rm (a)}
Theorem \ref{theorem-3} provides in particular a criterion for a given splitting Borel subgroup to be conjugate to a subgroup of a given splitting parabolic subgroup. This criterion is applied in the next section. \\
{\rm (b)}
Following \cite{DP}, we call a generalized flag $\mathcal{G}$ a {\em flag} if the chain $(\mathcal{G},\subset)$ is isomorphic as ordered set to a subset of $(\mathbb{Z},\leq)$.
Then the second part of condition {\rm (b)}\,{\rm (i)} in Theorem \ref{theorem-3} can be rephrased by saying that the maximal generalized flag $\mathcal{F}_0$ is a flag. Another characterization of flags is provided by \cite[Proposition 7.2]{DP} which says that the ind-variety of generalized flags $\mathbf{Fl}(\mathcal{G},E)$ (resp., $\mathbf{Fl}(\mathcal{G},\omega,E)$) is {\em projective} (i.e., isomorphic as ind-variety to a closed ind-subvariety of the infinite-dimensional projective space $\mathbb{P}^\infty$) if and only if $\mathcal{G}$ is a flag.
\end{remark}

\subsection{Decomposition of ind-grassmannians}

\label{section-4-5}

A minimal (nontrivial) generalized flag $\mathcal{F}=\{0,F,V\}$ of the space $V$ is determined by the proper nonzero subspace $F\subset V$. If $\mathcal{F}$ is compatible with the basis $E$, then the surjective map $\sigma_0:E\to \{1,2\}$ such that $F=\langle e\in E:\sigma_0(e)=1\rangle$ can be simply viewed as the subset $\sigma_0\subset E$ such that $F=\langle \sigma_0\rangle$.

In this case the ind-variety $\mathbf{Fl}(\mathcal{F},E)$ is an {\em ind-grassmannian} and we denote it by $\mathbf{Gr}(F,E)$.
\begin{itemize}
\item
If $k:=\dim F$ is finite, a subspace $F_1\subset V$ is $E$-commensurable with $F$ if and only if $\dim F_1=k$.
Thus the ind-variety $\mathbf{Gr}(F,E)$ only depends on $k$, and we write $\mathbf{Gr}(k)=\mathbf{Gr}(F,E)$ in this case.
\item
If $k:=\mathrm{codim}_VF$ is finite, the ind-variety $\mathbf{Gr}(F,E)$ depends on $E$ and $k$ (but not on $F$). It is also isomorphic to $\mathbf{Gr}(k)$. Indeed, the basis $E\subset V$ gives rise to a dual family $E^*\subset V^*$. The linear space $V_*:=\langle E^*\rangle$ is then countable dimensional. Let $U^\#:=\{\phi\in V_*:\phi(u)=0\ \forall u\in U\}$ be the orthogonal subspace in $V_*$ of a subspace $U\subset V$. The map $U\mapsto U^\#$ realizes an isomorphism of ind-varieties between $\mathbf{Gr}(F,E)$ and $\{F'\subset V_*:\dim F'=k\}\cong\mathbf{Gr}(k)$.
\item
If $F$ is both infinite dimensional and infinite codimensional, the ind-variety $\mathbf{Gr}(F,E)$ depends on $(F,E)$, although all ind-varieties of this type are isomorphic; their isomorphism class is denoted $\mathbf{Gr}(\infty)$.
Moreover, $\mathbf{Gr}(\infty)$ and $\mathbf{Gr}(k)$ are not isomorphic as ind-varieties (see \cite{PT}).
\end{itemize}

Let $\mathfrak{S}(E)$ be the set of subsets $\sigma\subset E$. The  group $\mathbf{W}(E)$ acts on $\mathfrak{S}(E)$ in a natural way. The $\mathbf{W}(E)$-orbit of $\sigma_0$ is the subset $\mathbf{W}(E)\cdot\sigma_0=\{\sigma\in\mathfrak{S}(E):|\sigma_0\setminus\sigma|=|\sigma\setminus\sigma_0|<+\infty\}$.
We write $F_\sigma=\langle\sigma\rangle$ (for $\sigma\in\mathfrak{S}(E)$).

The following statement describes the decomposition of the ind-grassmannian $\mathbf{Gr}(F,E)$ into $\mathbf{B}$-orbits. It is a direct consequence of Theorem \ref{theorem-1}.

\begin{proposition}
\label{proposition-4-new}
Let $\mathbf{B}\subset\mathbf{G}(E)$ be a splitting Borel subgroup containing $\mathbf{H}(E)$. \\
{\rm (a)}
We have the decomposition
\[\mathbf{Gr}(F,E)=\bigsqcup_{\sigma\in\mathbf{W}(E)\cdot\sigma_0}\mathbf{B}F_\sigma.\]
{\rm (b)} For $F'\in\mathbf{Gr}(F,E)$,\ we have $F'\in\mathbf{B}F_\sigma$ if and only if \[\sigma=\sigma_{F'}:=\{e\in E:F'\cap\langle e'\in E:e'\prec_\mathbf{B}e\rangle\not= F'\cap\langle e'\in E:e'\preceq_\mathbf{B}e\rangle\}.\]
{\rm (c)} For $\sigma\in\mathbf{W}(E)\cdot\sigma_0$, the orbit $\mathbf{B}F_\sigma$ is a locally closed ind-subvariety of $\mathbf{Gr}(F,E)$ isomorphic to an affine space $\mathbb{A}^{d_\sigma}$ of (possibly infinite) dimension
\[d_\sigma=n_\mathrm{inv}(\sigma):=|\{(e,e')\in E\times E:e\prec_\mathbf{B}e',\ e\notin\sigma,\ e'\in\sigma\}|.\]
{\rm (d)} For $\sigma,\tau\in\mathbf{W}(E)\cdot\sigma_0$, the inclusion $\mathbf{B}F_\sigma\subset\overline{\mathbf{B}F_\tau}$ holds if and only if $\sigma\leq\tau$, where the relation $\sigma\leq\tau$ means that,
if $e_1\prec_\mathbf{B}e_2\prec_\mathbf{B}\ldots\prec_\mathbf{B}e_\ell$ are the elements of $\sigma\setminus\tau$ and $f_1\prec_\mathbf{B}f_2\prec_\mathbf{B}\ldots\prec_\mathbf{B}f_\ell$ are the elements of $\tau\setminus\sigma$, then
$e_i\prec_\mathbf{B}f_i$ for all $i\in\{1,\ldots,\ell\}$.
\end{proposition}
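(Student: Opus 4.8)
The plan is to specialize Theorem~\ref{theorem-1} to the minimal generalized flag $\mathcal{F}=\{0,F,V\}$, for which $A=\{1,2\}$ with $1\prec_A 2$ and an element $\sigma\in\mathfrak{S}(E,A)$ is recorded by the subset $\{e\in E:\sigma(e)=1\}\in\mathfrak{S}(E)$, with $F_\sigma=\langle\sigma\rangle$. Under this dictionary the orbit $\mathbf{W}(E)\cdot\sigma_0$ is exactly $\{\sigma\in\mathfrak{S}(E):|\sigma_0\setminus\sigma|=|\sigma\setminus\sigma_0|<\infty\}$, and part~(a) is Theorem~\ref{theorem-1}(a). For part~(b) I would evaluate the formula~(\ref{definition-sigmaG}) on $\mathcal{G}=\{0,F',V\}$: since $G''_1=F'$ and $G''_2=V$, and $F''_{0,e}\supsetneq F'_{0,e}$ forces $\alpha=2$ to always qualify, one gets $\sigma_\mathcal{G}(e)=1$ iff $F'\cap F''_{0,e}\neq F'\cap F'_{0,e}$, i.e. iff $e\in\sigma_{F'}$; hence the subset attached to $\sigma_\mathcal{G}$ is $\sigma_{F'}$ and the claim follows from Theorem~\ref{theorem-1}(b). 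For the dimension in part~(c) I would observe that a pair $(e,e')$ with $e\prec_\mathbf{B}e'$ is an inversion of $\sigma$ iff $\sigma(e)\succ_A\sigma(e')$, which here means $e\notin\sigma$ and $e'\in\sigma$; this is the asserted formula for $d_\sigma=n_\mathrm{inv}(\sigma)$, and the cell structure is Theorem~\ref{theorem-1}(c).

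The substantive point is part~(d), the identification of the abstract order $\leq$ of Theorem~\ref{theorem-1}(d) with the explicit criterion on subsets. First I would record that the covering relation $\sigma\,\hat{<}\,\tau$ here reads $\tau=(\sigma\setminus\{e\})\cup\{e'\}$ with $e\prec_\mathbf{B}e'$, $e\in\sigma$, $e'\notin\sigma$; that is, passing upward moves one element of the subset from a smaller to a larger position, and every such transposition lies in $\mathbf{W}(E)$, so one stays within the orbit. For the implication ``criterion $\Rightarrow\sigma\leq\tau$'' I would argue by induction on $\ell=|\sigma\setminus\tau|$: writing $e_\ell=\max_{\prec_\mathbf{B}}(\sigma\setminus\tau)$ and $f_\ell=\max_{\prec_\mathbf{B}}(\tau\setminus\sigma)$, the hypothesis gives $e_\ell\prec_\mathbf{B}f_\ell$, so $\sigma\,\hat{<}\,\sigma'$ with $\sigma'=(\sigma\setminus\{e_\ell\})\cup\{f_\ell\}$; since $\sigma'\setminus\tau$ and $\tau\setminus\sigma'$ are obtained by deleting $e_\ell$ and $f_\ell$, the criterion persists for $(\sigma',\tau)$ with $\ell-1$ differences, and the induction closes.

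For the reverse implication ``$\sigma\leq\tau\Rightarrow$ criterion'' I would use a rank count. For $\mu$ in the orbit set $\delta_\mu(g)=|\{e\in\mu\setminus\tau:e\preceq_\mathbf{B}g\}|-|\{e\in\tau\setminus\mu:e\preceq_\mathbf{B}g\}|$, a finite integer since $\mu$ and $\tau$ have finite symmetric difference. A single upward move, which replaces a smaller position by a larger one, can only decrease the count $|\{e\in\mu:e\preceq_\mathbf{B}g\}|$, by $0$ or $1$, for each fixed $g$; as $\tau$ is held fixed this shows $\delta_\mu(g)$ is nonincreasing along any chain of covering moves, so from $\sigma$ to $\tau$ (where $\delta_\tau\equiv 0$) we obtain $\delta_\sigma(g)\geq 0$ for all $g$. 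It remains to convert these inequalities into the criterion: with $\sigma\setminus\tau=\{e_1\prec_\mathbf{B}\cdots\prec_\mathbf{B}e_\ell\}$ and $\tau\setminus\sigma=\{f_1\prec_\mathbf{B}\cdots\prec_\mathbf{B}f_\ell\}$, the condition $\delta_\sigma(g)\geq 0$ for all $g$ says precisely that each initial segment of $E$ contains at least as many $e_i$ as $f_i$, which by the standard dominance argument is equivalent to $e_i\preceq_\mathbf{B}f_i$ for all $i$; as $\sigma\setminus\tau$ and $\tau\setminus\sigma$ are disjoint, $e_i\neq f_i$ and the inequalities are strict, giving $e_i\prec_\mathbf{B}f_i$.

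The main obstacle I anticipate is bookkeeping rather than conceptual: one must handle the fact that the absolute ranks $|\{e\in\mu:e\preceq_\mathbf{B}g\}|$ may be infinite, which is why I phrase the monotonicity through the manifestly finite differences $\delta_\mu(g)$; and one must check that the inductive exchange in the converse direction stays inside the single orbit $\mathbf{W}(E)\cdot\sigma_0$ and terminates after $\ell$ steps. Once these points are settled, the equivalence of the two formulations of the order is a routine dominance computation, and the remaining parts are direct specializations of Theorem~\ref{theorem-1}.
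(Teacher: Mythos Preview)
Your proposal is correct and follows exactly the paper's approach: the paper states that Proposition~\ref{proposition-4-new} ``is a direct consequence of Theorem~\ref{theorem-1}'' and gives no further argument. Your treatment of parts~(a)--(c) is the intended specialization, and for part~(d) you supply the combinatorial identification of the abstract order $\leq$ with the explicit dominance criterion on subsets, which the paper leaves entirely to the reader; your rank-difference argument via the finite quantities $\delta_\mu(g)$ is a clean way to handle the possibly infinite initial segments.
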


\begin{example}[Case of the ind-grassmannian $\mathbf{Gr}(k)$]
\label{example-2}
Let $\mathfrak{S}_k(E)$ be the set of subsets $\sigma\subset E$ of cardinality $k$.
Given $\sigma_0\in\mathfrak{S}_k(E)$,
set $F=\langle \sigma_0\rangle$, and consider the splitting parabolic subgroup $\mathbf{P}_F=\{g\in\mathbf{G}(E):g(F)=F\}$
and the ind-grassmannian $\mathbf{Gr}(k)=\mathbf{Gr}(F,E)=\mathbf{G}(E)/\mathbf{P}_F$.
 By Proposition \ref{proposition-4-new}\,{\rm (a)}, we have the decomposition
\[\mathbf{Gr}(k)=\bigsqcup_{\sigma\in\mathfrak{S}_k(E)}\mathbf{B}F_\sigma.\]
By Proposition \ref{proposition-4-new}\,{\rm (c)}, the cell $\mathbf{B}F_\sigma$ is finite dimensional if and only if $\sigma$ is contained in a finite ideal of the ordered set $(E,\preceq_\mathbf{B})$, i.e., there is a finite subset $\overline{\sigma}\subset E$ satisfying ($e\in \overline{\sigma}$ and $e'\preceq_\mathbf{B}e$ $\Rightarrow$ $e'\in\overline{\sigma}$) and containing $\sigma$.
It easily follows that there are finite-dimensional $\mathbf{B}$-orbits in $\mathbf{Gr}(k)$ if and only if  the maximal generalized flag $\mathcal{F}_0$ corresponding to $\mathbf{B}$ contains a subspace $M$ of dimension $k$.
By Theorem \ref{theorem-3}, $\mathbf{B}$ is conjugate to a subgroup of the splitting parabolic subgroup
$\mathbf{P}_F$
exactly in this case.
By Theorem \ref{theorem-3} (or directly), we note that
all cells $\mathbf{B}F_\sigma\subset\mathbf{Gr}(k)$ are finite dimensional if and only if $(E,\preceq_\mathbf{B})$ is isomorphic to $(\mathbb{N},\leq)$ as an ordered set, in other words $\mathcal{F}_0$ is a flag of the form
\begin{equation}
\label{10}
\mathcal{F}_0=(F_{0,0}\subset F_{0,1}\subset F_{0,2}\subset\ldots)\quad\mbox{with}\quad\dim F_{0,i}=i\ \mbox{ for all $i\geq 0$.}
\end{equation}
By Proposition \ref{proposition-4-new}\,{\rm (d)}, given $\sigma=\{e_1\prec_\mathbf{B}e_2\prec_\mathbf{B}\ldots\prec_\mathbf{B}e_k\}$
and $\tau=\{f_1\prec_\mathbf{B}f_2\prec_\mathbf{B}\ldots\prec_\mathbf{B}f_k\}$, we have $\mathbf{B}F_\sigma\subset\overline{\mathbf{B}F_\tau}$ if and only if $e_i\preceq_\mathbf{B}f_i$ for all $i\in\{1,\ldots,k\}$.

Now let $\tau_0\subset E$ be an infinite subset whose complement $E\setminus \tau_0$ is finite of cardinality $k$. Let $M=\langle\tau_0\rangle$ be the corresponding subspace of $V$ of codimension $k$ and let $\mathbf{P}_M\subset\mathbf{G}(E)$ be the corresponding splitting parabolic subgroup. We consider the ind-grassmannian $\mathbf{Gr}(M,E)=\mathbf{G}(E)/\mathbf{P}_M$ which is isomorphic to $\mathbf{Gr}(k)=\mathbf{G}(E)/\mathbf{P}_F$ as mentioned at the beginning of Section \ref{section-4-5}. If $\mathcal{F}_0$ is as in (\ref{10}) and $\mathbf{B}$ is the corresponding splitting Borel subgroup, then it follows from Proposition \ref{proposition-4-new}\,{\rm (c)} that every $\mathbf{B}$-orbit of $\mathbf{Gr}(M,E)$ is infinite dimensional. By Theorem \ref{theorem-3}, this shows in particular that the splitting parabolic subgroups $\mathbf{P}_F$ and $\mathbf{P}_M$ are not conjugate under $\mathbf{G}(E)$.
\end{example}

\begin{example}[Case of the infinite-dimensional projective space]
\label{example-3}
Assume that $k=\dim F=1$. In this case $\mathbf{Gr}(k)$ is the infinite-dimensional projective space $\mathbb{P}^\infty$ (see Example \ref{example1}). The decomposition becomes
\[\mathbb{P}^\infty=\bigsqcup_{e\in E}\mathbf{C}_e\]
where $\mathbf{C}_e=\mathbf{B}\langle e\rangle=\{L\mbox{ line}:L\subset \langle e'\in E:e'\preceq_\mathbf{B}e\rangle,\ L\not\subset\langle e'\in E:e'\prec_\mathbf{B} e\rangle\}$ for all $e\in E$. The cell $\mathbf{C}_e$ is isomorphic to an affine space of dimension $\dim \mathbf{C}_e=|\{e'\in E:e'\prec_{\mathbf{B}}e\}|$. Moreover, $\mathbf{C}_e\subset\overline{\mathbf{C}_f}$ if and only if $e\preceq_{\mathbf{B}} f$.

In this case the maximal generalized flag $\mathcal{F}_0=\{F'_{0,e},F''_{0,e}:e\in E\}$ corresponding to $\mathbf{B}$ can be retrieved from the cell decomposition:
\[F''_{0,e}=\sum_{L\in\overline{\mathbf{C}}_e}L\quad\mbox{and}\quad F'_{0,e}=\sum_{L\in\overline{\mathbf{C}}_e\setminus \mathbf{C}_e}L\ \mbox{ for all $e\in E$.}\]

More generally, let $(A,\preceq)$ be a totally ordered set and let $\mathbb{P}^\infty=\bigsqcup_{\alpha\in A}\mathbf{C}_\alpha$ be a {\it linear} cell decomposition such that $\mathbf{C}_\alpha\subset\overline{\mathbf{C}_\beta}$ whenever $\alpha\preceq\beta$. By ``linear'' we mean that each $\overline{\mathbf{C}_\alpha}$ is a projective subspace of $\mathbb{P}^\infty$, i.e., we can find a subspace $F''_\alpha\subset V$ such that $\overline{\mathbf{C}_\alpha}=\mathbb{P}(F''_\alpha)$.
Setting $F'_\alpha=\sum_{\beta<\alpha} F''_\beta$,
we get a generalized flag $\mathcal{F}_0:=\{F'_\alpha, F''_\alpha:\alpha\in A\}$ such that
$\mathbb{P}(F''_\alpha)\setminus \mathbb{P}(F'_\alpha)$ is a (possibly infinite-dimensional) affine space for all $\alpha$.
The last property ensures that $\dim F''_\alpha/F'_\alpha=1$, i.e., $\mathcal{F}_0$ is a maximal generalized flag. In this way we obtain a correspondence between maximal generalized flags (not necessarily compatible with a given basis) and linear cell decompositions of the infinite-dimensional projective space $\mathbb{P}^\infty$.
\end{example}

\begin{example}[Case of the ind-grassmannian $\mathbf{Gr}(\infty)$]

Assume that the basis $E$ is parametrized by $\mathbb{Z}$, in other words let $E=\{e_i\}_{i\in\mathbb{Z}}$. We consider the splitting Borel subgroup $\mathbf{B}$ corresponding to the natural order $\leq$ on $\mathbb{Z}$.

Let $F=\langle e_i:i\leq 0\rangle$. Then the ind-variety $\mathbf{Gr}(F,E)$ is isomorphic to $\mathbf{Gr}(\infty)$. We have $\mathbf{B}\subset\mathbf{P}_F$. It follows from Theorem \ref{theorem-3} that every $\mathbf{B}$-orbit of $\mathbf{Gr}(F,E)$ is finite dimensional.

Let $F'=\langle e_i:i\in 2\mathbb{Z}\rangle$. Again the ind-variety $\mathbf{Gr}(F',E)$ is isomorphic to $\mathbf{Gr}(\infty)$.\ However in this case we see from Proposition \ref{proposition-4-new}\,{\rm (c)} that every $\mathbf{B}$-orbit of $\mathbf{Gr}(F',E)$ is infinite dimensional.
\end{example}

We now suppose that the space $V$ is endowed with a nondegenerate symmetric or skew-symmetric bilinear form $\omega$ and the basis $E$ is $\omega$-isotropic with corresponding involution $i_E:E\to E$.
Then a minimal $\omega$-isotropic generalized flag is of the form $\mathcal{F}=(0\subset F\subset F^\perp\subset V)$ with $F\subset V$ proper and nontrivial, possibly $F=F^\perp$. Assuming that $F$ is compatible with the basis $E$, there is a subset $\sigma_0\subset E$ such that
$F=\langle \sigma_0\rangle$
and $i_E(\sigma_0)\cap\sigma_0=\emptyset$ as the generalized flag is $\omega$-isotropic.
The ind-variety $\mathbf{Fl}(\mathcal{F},\omega,E)$ is also denoted $\mathbf{Gr}(F,\omega,E)$ and called {\em isotropic ind-grassmannian}.
\begin{itemize}
\item If $\dim F=k$ is finite, the ind-variety $\mathbf{Gr}(F,\omega,E)$ is the set of all $k$-dimensional subspaces $M\subset V$ such that $M\subset M^\perp$. This ind-variety does not depend on $(F,E)$ and we denote it also by $\mathbf{Gr}^\omega(k)$.
\item If $\dim F$ is infinite, the isomorphism class of the ind-variety $\mathbf{Gr}(F,\omega,E)$ also depends on the dimension of the quotient $F^\perp/F$. A special situation is when $\dim F^\perp/F\in\{0,1\}$, in which case $\mathbf{Gr}(F,\omega,E)$ is formed by maximal isotropic subspaces.
\end{itemize}
We denote by $\mathfrak{S}^\omega(E)$ the set of subsets $\sigma\subset E$ such that $i_E(\sigma)\cap\sigma=\emptyset$. The group $\mathbf{W}^\omega(E)$  acts on $\mathfrak{S}^\omega(E)$ in a natural way. The orbit $\mathbf{W}^\omega(E)\cdot\sigma_0$ is the set of subsets $\sigma\in\mathfrak{S}^\omega(E)$ such that $|\sigma\setminus\sigma_0|=|\sigma_0\setminus\sigma|<+\infty$.
From Theorem \ref{theorem-2} we obtain the following description of the $\mathbf{B}$-orbits of $\mathbf{Gr}(F,\omega,E)$.

\begin{proposition}
Let $\mathbf{B}$ be a splitting Borel subgroup of $\mathbf{G}^\omega(E)$ corresponding to a total order $\preceq_\mathbf{B}$ on $E$. Recall that $i_E$ is a anti-automorphism of the ordered set $(E,\preceq_\mathbf{B})$. \\
{\rm (a)} We have the decomposition
\[\mathbf{Gr}(F,\omega,E)=\bigsqcup_{\sigma\in\mathbf{W}^\omega(E)\cdot\sigma_0}\mathbf{B}F_\sigma\]
where as before $F_\sigma=\langle\sigma\rangle$. \\
{\rm (b)} For $F'\in\mathbf{Gr}(F,\omega,E)$ we have $\sigma_{F'}\in\mathbf{W}^\omega(E)\cdot\sigma_0$ (see Proposition \ref{proposition-4-new}\,{\rm (b)}), moreover $F'\in\mathbf{B}F_\sigma$ if and only if $\sigma=\sigma_{F'}$. \\
{\rm (c)} For $\sigma\in\mathbf{W}^\omega(E)\cdot\sigma_0$, the orbit $\mathbf{B}F_\sigma$ is a locally closed ind-subvariety of $\mathbf{Gr}(F,\omega,E)$ isomorphic to an affine space of (possibly infinite) dimension
\[n_\mathrm{inv}^\omega(\sigma):=|\{(e,e')\in E\times E:e\prec_\mathbf{B}e'\not=i_E(e'),\ e\prec_\mathbf{B}i_E(e),\ \big((e\notin\sigma,\ e'\in\sigma)\ \mathrm{or}\ (i_E(e)\in\sigma,\ i_E(e')\notin\sigma)\big)\}|.\]
{\rm (d)} For  $\sigma,\tau\in\mathbf{W}^\omega(E)\cdot\sigma_0$,
the inclusion $\mathbf{B}F_\sigma\subset\overline{\mathbf{B}F_\tau}$ holds if and only if $\sigma\leq\tau$, where the relation $\sigma\leq\tau$ is defined as in Proposition \ref{proposition-4-new}\,{\rm (d)}.
\end{proposition}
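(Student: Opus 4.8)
The plan is to derive the statement as the specialization of Theorem~\ref{theorem-2} to the minimal $\omega$-isotropic generalized flag $\mathcal{F}=(0\subset F\subset F^\perp\subset V)$, exactly as Proposition~\ref{proposition-4-new} specializes Theorem~\ref{theorem-1}. First I would make the indexing set explicit. When $F\neq F^\perp$ the set $A=A_\mathcal{F}$ of consecutive pairs has three elements $\alpha_1\prec_A\alpha_0\prec_A\alpha_2$, corresponding to $(0,F)$, $(F,F^\perp)$, $(F^\perp,V)$; the anti-automorphism $i_A$ swaps $\alpha_1\leftrightarrow\alpha_2$ and fixes $\alpha_0$ (when $F=F^\perp$ one simply drops $\alpha_0$). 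A map $\sigma\in\mathfrak{S}^\omega(E,A)$ is then determined by the subset $\{e\in E:\sigma(e)=\alpha_1\}$, since the constraint $\sigma(i_E(e))=i_A(\sigma(e))$ forces $\sigma(e)=\alpha_2\Leftrightarrow i_E(e)\in\{e':\sigma(e')=\alpha_1\}$ and $\sigma(e)=\alpha_0\Leftrightarrow\sigma(i_E(e))=\alpha_0$. This identifies $\mathfrak{S}^\omega(E,A)$ with $\mathfrak{S}^\omega(E)$, and under it $\mathcal{F}_\sigma$ becomes $F_\sigma=\langle\sigma\rangle$. Part~(a) is then Theorem~\ref{theorem-2}(a) verbatim, once one checks that the orbit $\mathbf{W}^\omega(E)\cdot\sigma_0=\{\sigma_0\circ w^{-1}:w\in\mathbf{W}^\omega(E)\}$ consists precisely of the $\sigma\in\mathfrak{S}^\omega(E)$ with $|\sigma\setminus\sigma_0|=|\sigma_0\setminus\sigma|<\infty$; writing $\sigma=w(\{e:\sigma_0(e)=\alpha_1\})$, this is immediate because a finitary bijection produces a finite, balanced symmetric difference, and $i_E$-equivariance keeps the support $i_E$-stable.

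For part~(b) I would invoke Theorem~\ref{theorem-2}(b), which already gives $\sigma_{F'}\in\mathbf{W}^\omega(E)\cdot\sigma_0$ and the equivalence $F'\in\mathbf{B}F_\sigma\Leftrightarrow\sigma_{F'}=\sigma$. It then suffices to unwind the general invariant $\sigma_\mathcal{G}$ of~(\ref{definition-sigmaG}) in the two-step setting: writing the point as the flag $(0\subset F'\subset(F')^\perp\subset V)$ one has $G''_{\alpha_1}=F'$, so the minimum in~(\ref{definition-sigmaG}) equals $\alpha_1$ exactly at those $e$ for which $F'\cap F''_{0,e}\neq F'\cap F'_{0,e}$. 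Hence $\{e:\sigma_\mathcal{G}(e)=\alpha_1\}$ is the subset $\sigma_{F'}$ of Proposition~\ref{proposition-4-new}(b), which under the above identification is the asserted invariant. This is a direct unwinding of definitions.

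Part~(c) is a finite case analysis. By Theorem~\ref{theorem-2}(c) the dimension is $n^\omega_\mathrm{inv}(\sigma)$, counting pairs $(e,e')$ with $e\prec_\mathbf{B}e'$, $e\prec_\mathbf{B}i_E(e)$, $e'\neq i_E(e')$ and $\sigma(e)\succ_A\sigma(e')$. With only the three values $\alpha_1\prec_A\alpha_0\prec_A\alpha_2$, the strict inequality $\sigma(e)\succ_A\sigma(e')$ occurs exactly in the types $(\alpha_2,\alpha_1)$, $(\alpha_2,\alpha_0)$, $(\alpha_0,\alpha_1)$. Translating through $\sigma(e)=\alpha_1\Leftrightarrow e\in\sigma$ and $\sigma(e)=\alpha_2\Leftrightarrow i_E(e)\in\sigma$, the clause $e\notin\sigma,\ e'\in\sigma$ captures the two types with $\sigma(e')=\alpha_1$, while the clause $i_E(e)\in\sigma,\ i_E(e')\notin\sigma$ captures the two types with $\sigma(e)=\alpha_2$; their union is exactly $\sigma(e)\succ_A\sigma(e')$ (the overlap $(\alpha_2,\alpha_1)$ counted once), which reproduces the displayed formula, and the standing conditions match literally.

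Finally, part~(d) follows from Theorem~\ref{theorem-2}(d), so the work is to show that the abstract order $\leq_\omega$ on $\mathfrak{S}^\omega(E)$ coincides with the combinatorial order of Proposition~\ref{proposition-4-new}(d): $\sigma\leq_\omega\tau$ iff, pairing $e_1\prec_\mathbf{B}\cdots\prec_\mathbf{B}e_\ell$ of $\sigma\setminus\tau$ with $f_1\prec_\mathbf{B}\cdots\prec_\mathbf{B}f_\ell$ of $\tau\setminus\sigma$, one has $e_i\prec_\mathbf{B}f_i$ for all $i$. I expect this to be the main obstacle. Each elementary move $\sigma\hat{<}_\omega\tau=\sigma\circ t^\omega_{e,e'}$ must be read as an elementary step in the combinatorial order, and conversely every combinatorial inequality must be realized by a chain of such moves. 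The delicate point, absent in the $GL$ case, is the presence of the \emph{self-paired} moves $t^\omega_{e,e'}=t_{e,i_E(e)}$ (when $e'=i_E(e)$), which simultaneously delete $e$ from $\sigma$ and insert $i_E(e)$, together with the doubled moves $t_{e,e'}t_{i_E(e),i_E(e')}$ acting on both $\sigma$ and its $i_E$-image; one must check that tracking only the $\alpha_1$-part $\sigma$ still yields the stated order, the $\alpha_2$-part being determined by $i_E$. This parallels the Bruhat order on finite-dimensional isotropic Grassmannians, and I would carry it out by induction on $\ell$, using the three inversion types identified in part~(c) to match covering relations on the two sides.
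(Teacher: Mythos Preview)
Your approach is correct and is exactly the paper's: the proposition is stated immediately after the sentence ``From Theorem~\ref{theorem-2} we obtain the following description of the $\mathbf{B}$-orbits of $\mathbf{Gr}(F,\omega,E)$'' and is given no separate proof, so specializing Theorem~\ref{theorem-2} to the minimal $\omega$-isotropic flag is precisely what is intended. Your unwinding of the index set $A$, the identification $\mathfrak{S}^\omega(E,A)\cong\mathfrak{S}^\omega(E)$, and the case analysis for~(c) are all accurate; in particular the disjunction you extract does capture exactly the three inversion types $(\alpha_2,\alpha_1)$, $(\alpha_2,\alpha_0)$, $(\alpha_0,\alpha_1)$.

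One remark on part~(d): you are right that matching $\leq_\omega$ with the combinatorial order of Proposition~\ref{proposition-4-new}\,(d) is the only place where something is hidden, and the paper simply absorbs this into ``direct consequence'' (as it already does for Proposition~\ref{proposition-4-new}\,(d) itself, where the general order $\leq$ of Section~\ref{section-4-1} must be identified with the stated subset condition). Your inductive plan is fine; an alternative shortcut, in the spirit of the paper's reliance on finite-dimensional facts, is to observe that each $\mathrm{Fl}(\mathcal{F},\omega,E_n)$ sits as a closed subvariety of $\mathrm{Fl}(\mathcal{F},E_n)$ with $B^\omega(E_n)$-orbits equal to the traces of $B(E_n)$-orbits, so the closure order on the isotropic Grassmannian is the restriction of the one on the ambient Grassmannian, already identified in Proposition~\ref{proposition-4-new}\,(d). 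Either route completes the argument.
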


\begin{example}[Case of the isotropic ind-grassmannian $\mathbf{Gr}^\omega(k)$]
In this case the cells $\mathbf{B}F_\sigma$ are parametrized by the set $\mathfrak{S}_k^\omega(E)$ of finite subsets $\sigma\subset E$ of cardinality $k$ such that $i_E(\sigma)\cap\sigma=\emptyset$.
The cell $\mathbf{B}F_\sigma$ is finite dimensional if and only if $\sigma$ is contained in a finite ideal $\overline{\sigma}$ of the ordered set $(E,\preceq_\mathbf{B})$. Thereby the ind-variety $\mathbf{Gr}^\omega(k)$ has finite-dimensional $\mathbf{B}$-orbits if and only if the ordered set $(E,\preceq_\mathbf{B})$ has a finite ideal with $k$ elements.
Equivalently, the maximal generalized flag $\mathcal{F}_0$ corresponding to $\mathbf{B}$ has a subspace $M\in\mathcal{F}_0$ of dimension $k$. Since $\mathcal{F}_0$ is maximal and $\omega$-isotropic, it is of the form
\[\mathcal{F}_0=\{0=F_{0,0}\subset F_{0,1}\subset\ldots\subset F_{0,k}\subset(\ldots)\subset F_{0,k}^\perp\subset\ldots\subset F_{0,1}^\perp\subset F_{0,0}^\perp=V\}\]
with infinitely many terms between $F_{0,k}$ and $F_{0,k}^\perp$.
Hence the ordered set $(\mathcal{F}_0,\subset)$ is not isomorphic to a subset of $(\mathbb{Z},\leq)$. By Theorem \ref{theorem-3}, this implies that $\mathbf{Gr}^\omega(k)$ admits infinite-dimensional $\mathbf{B}$-orbits. Therefore, contrary to the case of the  ind-grassmannian $\mathbf{Gr}(k)$ (see Example \ref{example-2}), there is no splitting Borel subgroup $\mathbf{B}\subset\mathbf{G}^\omega(E)$ for which all  $\mathbf{B}$-orbits of the  isotropic ind-grassmannian $\mathbf{Gr}^\omega(k)$ are finite dimensional.

Assume that $\omega$ is skew symmetric and $k=1$.\ Then $\mathbf{Gr}^\omega(k)$ coincides with the entire infinite-dimensional projective space $\mathbb{P}^\infty$. The above discussion shows that, for every splitting Borel subgroup $\mathbf{B}$ of $\mathbf{G}^\omega(E)$, there are  infinite-dimensional $\mathbf{B}$-orbits in the projective space $\mathbb{P}^\infty$. We know however from Examples \ref{example-2}--\ref{example-3} that, for a well-chosen splitting Borel subgroup of $\mathbf{G}(E)$, the infinite-dimensional projective space $\mathbb{P}^\infty$ admits a decomposition into finite-dimensional orbits. Therefore the realizations of $\mathbb{P}^\infty$ as $\mathbf{Gr}(1)$ and $\mathbf{Gr}^\omega(1)$ yield different sets of cell decompositions on $\mathbb{P}^\infty$.
\end{example}

\begin{example}[An isotropic ind-grassmannian with decomposition into finite-dimensional cells]
Let $E=\{e_i:i\in 2\mathbb{Z}+1\}$ be an $\omega$-isotropic basis of $V$ such that $\omega(e_i,e_j)=0$ unless $i+j=0$. For $k\geq 1$, we let $F=\langle e_i:i\leq -k\rangle$ and consider the ind-grassmannian $\mathbf{Gr}(F,\omega,E)$. Let $\mathbf{B}$ be the splitting Borel subgroup of $\mathbf{G}^\omega(E)$ corresponding to the natural total order $\leq$ on $2\mathbb{Z}+1$. We then have $\mathbf{B}\subset\mathbf{P}_F^\omega:=\{g\in\mathbf{G}^\omega(E):g(F)=F\}$, hence
by Theorem \ref{theorem-3}\,{\rm (b)} all the $\mathbf{B}$-orbits of the ind-grassmannian $\mathbf{Gr}(F,\omega,E)$ are finite dimensional.
\end{example}

\section{Proof of the results stated in Section \ref{section-4}}

\label{section-5}

Throughout this section let $\mathbf{G}=\mathbf{G}(E)$ or $\mathbf{G}^\omega(E)$, and $\mathbf{W}$ is the corresponding group $\mathbf{W}(E)$ or $\mathbf{W}^\omega(E)$ (see Sections \ref{section-4-1}--\ref{section-4-2}).
The proofs of the results stated in Section \ref{section-4} are given in Sections \ref{section-5-3}--\ref{section-5-5}. They rely on preliminary facts presented in Section \ref{section-5-1} (which is concerned with the combinatorics of the  group $\mathbf{W}$) and Section \ref{section-5-2} (where we review some standard facts on Schubert decomposition of finite-dimensional flag varieties).

\subsection{Combinatorial properties of the group $\mathbf{W}$}

\label{section-5-1}

We first recall certain features of the group $\mathbf{W}$:
\begin{itemize}
\item $\mathbf{W}\cong N_\mathbf{G}(\mathbf{H})/\mathbf{H}$ where $\mathbf{H}\subset\mathbf{G}$ is the splitting Cartan subgroup of elements diagonal in the basis $E$; specifically, to an element $w\in\mathbf{W}$, we can associate an explicit representative $\hat{w}\in N_\mathbf{G}(\mathbf{H})$ (see Section \ref{section-4-3}).
\item We have a natural exhaustion
\[\mathbf{W}=\bigcup_{n\geq 1}W_n\]
where $W_n=W(E_n)$ (resp. $W_n=W^\omega(E_n)$) is the Weyl group of $G_n=G(E_n)$ (resp. $G_n=G^\omega(E_n)$).
\item
Let $E'=E$ if $\mathbf{G}=\mathbf{G}(E)$ and $E'=\{e\in E:i_E(e)\not=e\}$ if $\mathbf{G}=\mathbf{G}^\omega(E)$, and let
\[\hat{E}=\{(e,e')\in E'\times E':e\not=e'\}.\]
For $(e,e')\in\hat{E}$, set $s_{e,e'}=t_{e,e'}$ if $\mathbf{G}=\mathbf{G}(E)$ and $s_{e,e'}=t^\omega_{e,e'}$ if $\mathbf{G}=\mathbf{G}^\omega(E)$ (see Sections \ref{section-4-1}--\ref{section-4-2}).
In both cases  for each pair $(e,e')\in\hat{E}$, we get an element $s_{e,e'}\in\mathbf{W}$. Clearly $\{s_{e,e'}:(e,e')\in\hat{E}\}$ is a system of generators of $\mathbf{W}$.
\end{itemize}

\subsubsection{Analogue of Bruhat length}

As seen in Sections \ref{section-4-1}--\ref{section-4-2}, fixing a splitting Borel subgroup $\mathbf{B}$ of $\mathbf{G}$ with $\mathbf{B}\supset\mathbf{H}$ is equivalent to fixing a total order $\preceq_\mathbf{B}$ on the basis $E$ (resp., such that the involution $i_E:E\to E$ becomes an anti-automorphism of ordered set, in the case where $\mathbf{G}=\mathbf{G}^\omega(E)$).
This total order allows us to define a system of simple transpositions for $\mathbf{W}$ by letting
\[S_\mathbf{B}=\{s_{e,e'}:\mbox{$e,e'$ are consecutive elements of $(E',\preceq_\mathbf{B})$}\}.\]
Note however that in general $S_\mathbf{B}$ does not generate the group $\mathbf{W}$. For  $w\in\mathbf{W}$, we define
\[\ell_\mathbf{B}(w)=\min\{m\geq 0:w=s_1\cdots s_m\ \mbox{for some $s_1,\ldots,s_m\in S_\mathbf{B}$}\}\]
if the set on the right-hand side is nonempty, and
\[\ell_\mathbf{B}(w)=+\infty\]
otherwise.

For every $n\geq 1$, the order $\preceq_\mathbf{B}$ induces a total order on the finite subset $E_n\subset E$, and thus a system of simple reflections $S_{\mathbf{B},n}:=\{s_{e,e'}:\mbox{$e,e'$ are consecutive elements of $(E_n\cap E',\preceq_\mathbf{B})$}\}$ of the Weyl group $W_n$. Let $\ell_{\mathbf{B},n}(w)$ be the usual Bruhat length of $w\in W_n$ with respect to $S_{\mathbf{B},n}$.

%Hereafter we use the following notation
%\[\hat{E}_\mathbf{B}=\left\{\begin{array}{ll}
%\{(e,e')\in\hat{E}:e\prec_\mathbf{B}e'\} & \mbox{in the case of $\mathbf{G}=\mathbf{G}(E)$,} \\
%\{(e,e')\in\hat{E}:e\prec_\mathbf{B}e',\ e\prec_\mathbf{B}i_E(e)\} & \mbox{in the case of $\mathbf{G}=\mathbf{G}^\omega(E)$,}
%\end{array}\right.\]
%and in both cases, given $w\in\mathbf{W}$, we set
%\[m_\mathbf{B}(w)=|\{(e,e')\in\hat{E}_\mathbf{B}:w(e)\succ_\mathbf{B}w(e')\}|.\]

%\begin{remark}
%\label{remark-5-1}

%In the case where $\mathbf{G}=\mathbf{G}^\omega(E)$, we
%note that
%\[m_\mathbf{B}(w)=\frac{1}{2}|\{(e,e')\in\hat{E}:e\prec_\mathbf{B}e'\not=i_E(e),\ w(e)\succ_\mathbf{B}w(e')\}|+|\{(e,i_E(e)):w(e)\succ_\mathbf{B}w(i_E(e))\}|.\]
%\end{remark}

\begin{proposition}
\label{proposition-8-new}
Let $w\in\mathbf{W}$. Then
\begin{itemize}
\item[\rm (a)] $\ell_\mathbf{B}(w)=\lim_{n\to\infty}\ell_{\mathbf{B},n}(w)$; \item[\rm (b)] $\ell_\mathbf{B}(w)=\left\{
\begin{array}{ll}
|\{(e,e')\in \hat{E}:e\prec_\mathbf{B}e'\mbox{ and }w(e)\succ_\mathbf{B}w(e')\}| & \mbox{if $\mathbf{G}=\mathbf{G}(E)$,} \\
|\{(e,e')\in \hat{E}:e\prec_\mathbf{B}e',\ e\prec_\mathbf{B}i_E(e)\mbox{ and }w(e)\succ_\mathbf{B}w(e')\}| & \mbox{if $\mathbf{G}=\mathbf{G}^\omega(E)$;}
\end{array}\right.$
\item[\rm (c)] $\ell_\mathbf{B}(w)=+\infty$ if and only if there is $e\in E$ such that the set $\{e'\in E:e\prec_\mathbf{B}e'\prec_\mathbf{B}w(e)\}$ is infinite.
\end{itemize}
\end{proposition}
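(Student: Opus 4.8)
The plan is to establish the three parts in the logical order (a), then (c), then (b), using the finite-dimensional Bruhat length in $W_n$ as the principal computational engine and passing to the limit.

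\medskip

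\noindent\textbf{Proof strategy.} First I would prove part (a) by analyzing how the length of $w$ relative to $S_{\mathbf{B},n}$ behaves as $n$ grows. The key observation is that the sequence $\ell_{\mathbf{B},n}(w)$ is nondecreasing in $n$: once $n$ is large enough that $E_n$ contains the (finite) support of $w$, adding more basis vectors to $E_n$ can only introduce new simple reflections into $S_{\mathbf{B},n}$, and writing $w$ as a product of these reflections requires at least as many factors as before, since a shorter expression in the larger group $W_{n+1}$ would project to a valid expression in $W_n$ upon deleting the added letters. Hence the limit $L:=\lim_{n\to\infty}\ell_{\mathbf{B},n}(w)$ exists in $\mathbb{N}\cup\{+\infty\}$. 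For the inequality $\ell_\mathbf{B}(w)\leq L$, note that any expression of $w$ using reflections from $S_{\mathbf{B},n}$ uses reflections from $S_\mathbf{B}$, since $S_{\mathbf{B},n}\subset S_\mathbf{B}$ for $n$ large (consecutive elements of $(E_n\cap E',\preceq_\mathbf{B})$ may fail to be consecutive in $(E',\preceq_\mathbf{B})$, so this inclusion needs care—this is the delicate point). For the reverse inequality, any $S_\mathbf{B}$-expression of $w$ involves only finitely many reflections, all supported on some $E_m$, and the simple reflections $s_{e,e'}\in S_\mathbf{B}$ used are eventually among $S_{\mathbf{B},n}$ for $n$ large; thus $\ell_{\mathbf{B},n}(w)\leq\ell_\mathbf{B}(w)$ for all large $n$.

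\medskip

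\noindent\textbf{The main obstacle} is precisely the reconciliation of the simple reflection systems $S_{\mathbf{B},n}$ and $S_\mathbf{B}$. Elements $e,e'$ that are consecutive in $(E_n\cap E',\preceq_\mathbf{B})$ need not be consecutive in $(E',\preceq_\mathbf{B})$, because elements of $E\setminus E_n$ may lie strictly between them. Consequently $s_{e,e'}$ need not belong to $S_\mathbf{B}$, and $S_{\mathbf{B},n}\not\subset S_\mathbf{B}$ in general. The resolution is that such an $s_{e,e'}$, being a transposition of non-adjacent elements of $(E',\preceq_\mathbf{B})$, can be expressed as a product of the genuinely adjacent transpositions in $S_\mathbf{B}$ lying between $e$ and $e'$—but this expression has length equal to the number of intervening elements, which may be \emph{infinite}. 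This is exactly the mechanism producing the infinite lengths in part (c), so rather than fighting it I would route the combinatorics through the explicit inversion counts.

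\medskip

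\noindent\textbf{Parts (b) and (c).} For part (b) I would compute $\ell_{\mathbf{B},n}(w)$ directly using the classical formula for Bruhat length in the Weyl groups of type $A$ (for $\mathbf{G}(E)$) and of types $B$, $C$, $D$ (for $\mathbf{G}^\omega(E)$) as a count of inversions. In type $A$, $\ell_{\mathbf{B},n}(w)=|\{(e,e')\in\hat{E}\cap(E_n\times E_n):e\prec_\mathbf{B}e',\ w(e)\succ_\mathbf{B}w(e')\}|$; in the isotropic cases the standard signed-permutation inversion formula gives the count with the additional constraint $e\prec_\mathbf{B}i_E(e)$, which accounts for the involution-symmetry of $\omega$-inversions. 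Taking the limit $n\to\infty$ over the exhaustion and invoking part (a) yields the global inversion formulas, since every inversion pair $(e,e')$ lies in some $E_n\times E_n$ and is counted stably thereafter. Part (c) then follows by combining (a) and (b): the length is finite precisely when the inversion set is finite, and the inversion set is infinite exactly when some $e\in E$ is moved by $w$ across infinitely many intervening basis vectors. Concretely, if the set $\{e'\in E:e\prec_\mathbf{B}e'\prec_\mathbf{B}w(e)\}$ is infinite for some $e$, each such $e'$ (together with $e$) contributes an inversion, forcing $\ell_\mathbf{B}(w)=+\infty$; conversely, if $\ell_\mathbf{B}(w)=+\infty$ then the inversion set is infinite, and since $w$ has finite support a pigeonhole argument locates a single $e$ responsible for infinitely many inversions, which forces the displayed interval to be infinite.
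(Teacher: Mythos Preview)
Your proposal contains a genuine circular gap at the heart of the argument. You correctly identify that the inequality $\ell_\mathbf{B}(w)\leq L:=\lim_{n\to\infty}\ell_{\mathbf{B},n}(w)$ is the delicate direction, precisely because $S_{\mathbf{B},n}\not\subset S_\mathbf{B}$ in general, and you explicitly acknowledge that rewriting a reflection $s_{e,e'}\in S_{\mathbf{B},n}$ as a product of genuine simple reflections in $S_\mathbf{B}$ may require infinitely many factors. But you then do not supply any argument for this inequality: you write ``rather than fighting it I would route the combinatorics through the explicit inversion counts,'' yet in your treatment of part (b) you simply ``invoke part (a),'' which is exactly what remained unproved. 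The easy direction $L\leq\ell_\mathbf{B}(w)$ together with the finite-dimensional inversion formula gives only $m_\mathbf{B}(w)\leq\ell_\mathbf{B}(w)$; nothing in your outline shows the reverse.

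The paper closes this gap by a direct induction on $m_\mathbf{B}(w)$ (when finite): given $w$ with $0<m_\mathbf{B}(w)<\infty$, one locates elements $e'',e'$ with $e''\prec_\mathbf{B}e'$ such that $w(e')$ and $w(e'')$ are \emph{consecutive in the full ordered set $(E',\preceq_\mathbf{B})$}, not merely in some $E_n$. This is the substantive step, and it uses the finiteness of $m_\mathbf{B}(w)$ via the established equivalence (\ref{5-2}) to rule out infinite intervals between $w(e')$ and $w(e'')$. One then peels off the genuine simple reflection $s_{w(e'),w(e'')}\in S_\mathbf{B}$, reducing $m_\mathbf{B}$ by one. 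This produces an $S_\mathbf{B}$-expression of length $m_\mathbf{B}(w)$, yielding $\ell_\mathbf{B}(w)\leq m_\mathbf{B}(w)$. A secondary issue: your monotonicity argument for $\ell_{\mathbf{B},n}(w)$ via ``projecting'' a reduced word from $W_{n+1}$ to $W_n$ by deleting letters is not valid---there is no such projection of Coxeter groups---though the monotonicity itself follows immediately once one has the inversion formula at each finite stage.
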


\begin{proof}
%Set $\hat{E}_\mathbf{B}=\{(e,e')\in\hat{E}:e\prec_\mathbf{B}e'\}$
%in the case of $\mathbf{G}=\mathbf{G}(e)$
%and $\hat{E}_\mathbf{B}=\{(e,e')\in\hat{E}:e\prec_\mathbf{B}e',\ e\prec_\mathbf{B}i_E(e)\}$
%in the case of $\mathbf{G}=\mathbf{G}^\omega(e)$. In both cases,
%$m_\mathbf{B}(w):=|\{(e,e')\in\hat{E}_\mathbf{B}:w(e)\succ_\mathbf{B}w(e')\}|$
%is the number on the right-hand side in {\rm (b)}.
Denote by $m_\mathbf{B}(w)$ the quantity in the right-hand side of {\rm (b)}. Then
\begin{equation}
\label{5-1}
\ell_\mathbf{B}(w)\geq \lim_{n\to\infty}\ell_{\mathbf{B},n}(w)=m_\mathbf{B}(w)
\end{equation}
(the inequality is a consequence of the definitions of $\ell_\mathbf{B}(w)$ and $\ell_{\mathbf{B},n}(w)$ while the equality follows from properties of (finite) Weyl groups).
%Also note that
%$\ell_{\mathbf{B},n}(w^{-1})=\ell_{\mathbf{B},n}(w)$ for all $n\geq 1$, %which yields $m_\mathbf{B}(w)=m_\mathbf{B}(w^{-1})$.
%If $m_\mathbf{B}(w)=+\infty$, then $\ell_\mathbf{B}(w)=+\infty$,
%and {\rm (a)} and {\rm (c)} are verified in this case.

Let $I_e(w)=\{e'\in E:e\prec_\mathbf{B}e'\prec_\mathbf{B}w(e)\}$.
We claim that
\begin{equation}
\label{5-2}
m_\mathbf{B}(w)=+\infty\ \Leftrightarrow\ \exists e\in E\ \mbox{such that}\ |I_e(w)|=+\infty.
\end{equation}

We first check the implication $\Rightarrow$ in (\ref{5-2}). The assumption yields an infinite sequence $\{(e_i,e'_i)\}_{i\in\mathbb{N}}$ such that $e_i\prec_\mathbf{B}e'_i$ and $w(e_i)\succ_\mathbf{B}w(e'_i)$. Since $w$ fixes all but finitely many elements of $E$, one of the sequences $\{e_i\}_{i\in\mathbb{N}}$ and $\{e'_i\}_{i\in\mathbb{N}}$ has a stationary subsequence,  and thus along a relabeled subsequence $\{(e_i,e'_i)\}_{i\in\mathbb{N}}$ we have $e_i=e$ for all $i\in\mathbb{N}$ and some $e\in E$, or $e'_i=e'$ for all $i\in\mathbb{N}$ and some $e'\in E$. In the former case,
the set $\{e'_i:w(e'_i)=e'_i\}$ is infinite and contained in $I_e(w)$. In the latter case, we similarly obtain that the set $\{f\in E:w(e')\prec_\mathbf{B} f\prec_\mathbf{B} e'\}$ is infinite, and since $w$ has finite order, this implies that $I_{w^r(e')}(w)$ is infinite for some $r\geq 1$.

Next we check the implication $\Leftarrow$ in (\ref{5-2}).
We assume that $|I_e(w)|=+\infty$ for some $e\in E$.
(Then, necessarily, $w(e)\not=e$, hence $e\not=i_E(e)$ in the case where $\mathbf{G}=\mathbf{G}^\omega(E)$.)
Since $w$ fixes all but finitely many elements of $E$, the set
$\{e'\in I_e(w):w(e')=e'\}$ is infinite.
Therefore, there are infinitely many couples $(e,e')\in\hat{E}$ such that $e\prec_\mathbf{B}e'$ and $w(e)\succ_\mathbf{B}w(e')$. Moreover, in the case where $\mathbf{G}=\mathbf{G}^\omega(E)$, up to replacing $(e,e')$ by $(i_E(e'),i_E(e))$, we get infinitely many such couples satisfying $e\prec_\mathbf{B}i_E(e)$.
This implies $m_\mathbf{B}(w)=+\infty$, and (\ref{5-2}) is proved.

In view of (\ref{5-1}) and (\ref{5-2}), to complete the proof of the proposition, it remains to show the relation $\ell_\mathbf{B}(w)\leq m_\mathbf{B}(w)$. We argue by induction on $m_\mathbf{B}(w)$.

If $m_\mathbf{B}(e)=0$, we get $w=\mathrm{id}$, and thus $\ell_\mathbf{B}(w)=0$.
Now let $w\in\mathbf{W}$ such that $0<m_\mathbf{B}(w)<+\infty$ and assume that $\ell_\mathbf{B}(w')\leq m_\mathbf{B}(w')$ holds for all $w'\in\mathbf{W}$ such that $m_\mathbf{B}(w')<m_\mathbf{B}(w)$.
Let $e\in E'$ be minimal such that there is $e'\in E'$ with $e\prec_\mathbf{B} e'$ and $w(e)\succ_\mathbf{B}w(e')$.\ Choose $e'$ maximal for this property. We claim that
\begin{equation}
\label{claim-15}
\mbox{the set $\{i\in E:w(e)\succ_\mathbf{B}i\succ_\mathbf{B}w(e')\}$ is finite.}
\end{equation}Assume the contrary. Since $w$ fixes all but finitely many elements of $E$, there are infinitely many $i\in E$ such that $w(e)\succ_\mathbf{B}i=w(i)\succ_\mathbf{B}w(e')$.
Note that we have $e\prec_\mathbf{B}i$ by the minimality of $e$. Thus there are infinitely many elements in the set $I_e(w)$. In view of (\ref{5-2}), this is impossible,\ and (\ref{claim-15}) is established.

By (\ref{claim-15}) we can find $i\in E'$ such that $w(e')\prec_\mathbf{B}i$ and $w(e'),i$ are consecutive in $E'$. Choose $e''\in E'$ such that $i=w(e'')$. By the maximality of $e'$, we have $e''\prec_\mathbf{B}e'$. In the case where $\mathbf{G}=\mathbf{G}^\omega(E)$, up to replacing $(e'',e')$ by $(i_E(e'),i_E(e''))$ if necessary, we may assume that $e''\prec_\mathbf{B}i_E(e'')$. Hence we have found $e'',e'\in E'$ with the following properties:
\[e''\prec_\mathbf{B}e';\quad \mbox{$w(e')\prec_\mathbf{B}w(e'')$ are consecutive in $E'$};\quad e''\prec_\mathbf{B}i_E(e'')\mbox{ (in the case where $\mathbf{G}=\mathbf{G}^\omega(E)$)}.\]
It is straightforward to deduce that $m_\mathbf{B}(s_{w(e'),w(e'')}w)=m_\mathbf{B}(w)-1$.
Using the induction hypothesis, we derive: $\ell_\mathbf{B}(w)\leq \ell_\mathbf{B}(s_{w(e'),w(e'')}w)+1\leq m_\mathbf{B}(s_{w(e'),w(e'')}w)+1=m_\mathbf{B}(w)$.\ The proof is now complete.
\end{proof}

\begin{corollary}
\label{corollary-8-new}
The following conditions are equivalent.
\begin{itemize}
\item[\rm (i)] $\mathcal{S}_\mathbf{B}$ generates $\mathbf{W}$;
\item[\rm (ii)] $\ell_\mathbf{B}(w)<+\infty$ for all $w\in \mathbf{W}$;
\item[\rm (iii)] $(E,\preceq_\mathbf{B})$ is isomorphic as an ordered set to a subset of $(\mathbb{Z},\leq)$.
\end{itemize}
\end{corollary}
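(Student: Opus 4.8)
The equivalence of (i) and (ii) is immediate from the definition of $\ell_\mathbf{B}$: for a fixed $w$ the condition $\ell_\mathbf{B}(w)<+\infty$ means precisely that $w$ lies in the subgroup of $\mathbf{W}$ generated by $S_\mathbf{B}$, so (ii) is exactly the assertion that this subgroup is all of $\mathbf{W}$, i.e. (i). The heart of the matter is therefore the equivalence of (ii) and (iii), which I would route through the intermediate condition $(\ast)$: every closed interval $\{f\in E:e\preceq_\mathbf{B}f\preceq_\mathbf{B}e'\}$ of $(E,\preceq_\mathbf{B})$ is finite. Since $E$ is countable, a standard argument identifies (iii) with $(\ast)$: if $(\ast)$ holds, fix a base point $c\in E$ and send each $e\in E$ to $\pm$ the cardinality of the finite interval strictly between $c$ and $e$, with sign according to whether $e\succeq_\mathbf{B}c$ or $e\prec_\mathbf{B}c$; this is a strictly increasing, hence order-preserving and injective, map $E\to\mathbb{Z}$. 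Conversely every interval of $\mathbb{Z}$ is finite, so any subset of $(\mathbb{Z},\leq)$ satisfies $(\ast)$.

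For $(\ast)\Rightarrow$ (ii) I would invoke Proposition \ref{proposition-8-new}(c). Under $(\ast)$, for every $w\in\mathbf{W}$ and every $e\in E$ the set $\{e'\in E:e\prec_\mathbf{B}e'\prec_\mathbf{B}w(e)\}$ is empty unless $e\prec_\mathbf{B}w(e)$, in which case it equals the open interval $(e,w(e))$, which is finite by $(\ast)$. Hence the criterion of part (c) never produces an infinite set, so $\ell_\mathbf{B}(w)<+\infty$ for all $w$, which is (ii).

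For the reverse implication I would prove the contrapositive $\neg(\ast)\Rightarrow\neg$(ii) by exhibiting a single element of infinite length. Assume some closed interval $[a,b]$ with $a\prec_\mathbf{B}b$ is infinite, so that the open interval $(a,b)$ is infinite. When $\mathbf{G}=\mathbf{G}(E)$ the transposition $w=t_{a,b}\in\mathbf{W}(E)$ satisfies $w(a)=b$, and $\{e':a\prec_\mathbf{B}e'\prec_\mathbf{B}w(a)\}=(a,b)$ is infinite, so Proposition \ref{proposition-8-new}(c) gives $\ell_\mathbf{B}(w)=+\infty$. The case $\mathbf{G}=\mathbf{G}^\omega(E)$ is the main obstacle: there the transposition must be replaced by an element $t^\omega_{a',b'}\in\mathbf{W}^\omega(E)$, which forces the endpoints $a',b'$ to lie in $E'$, i.e. to be non-fixed points of $i_E$. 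Since $i_E$ is an anti-automorphism of $(E,\preceq_\mathbf{B})$ with at most one fixed point $e_0$, I would adjust the endpoints by reflecting through $i_E$: take $(a',b')=(i_E(b),b)$ if $a=e_0$, take $(a',b')=(a,i_E(a))$ if $b=e_0$, and keep $(a',b')=(a,b)$ otherwise. A short check, using that $i_E$ reverses $\preceq_\mathbf{B}$ and fixes $e_0$, shows that in every case $a',b'\in E'$, $a'\prec_\mathbf{B}b'$, and $(a',b')$ still contains $(a,b)$ and is therefore infinite. Then $w:=t^\omega_{a',b'}\in\mathbf{W}^\omega(E)$ satisfies $w(a')=b'$, and Proposition \ref{proposition-8-new}(c) again yields $\ell_\mathbf{B}(w)=+\infty$, so (ii) fails. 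Combining the three implications gives (i) $\Leftrightarrow$ (ii) $\Leftrightarrow$ $(\ast)$ $\Leftrightarrow$ (iii).
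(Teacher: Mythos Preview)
Your argument is correct and mirrors the paper's proof: both identify (iii) with the finite-interval condition and use Proposition~\ref{proposition-8-new}(c), applied to the generators $s_{e,e'}$, to link that condition with (ii); your treatment of the $\mathbf{G}^\omega(E)$ case (adjusting endpoints away from the possible fixed point of $i_E$) is more explicit than the paper's, which simply quantifies over $(e,e')\in\hat E$. One small slip: the map $E\to\mathbb{Z}$ you describe is not injective as written, since both $c$ and its immediate successor are sent to $0$; replace the strict interval $(c,e)$ by the half-open interval $[c,e)$ (and symmetrically on the other side) to make it strictly increasing.
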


\begin{proof}
The equivalence {\rm (i)}$\Leftrightarrow${\rm (ii)} is immediate.
Note that condition {\rm (iii)} is equivalent to requiring that,
for all $e,e'\in E$, the interval $\{e''\in E:e\prec_\mathbf{B}e''\prec_\mathbf{B}e'\}$ is finite.
Thus the implication {\rm (iii)}$\Rightarrow${\rm (ii)} is guaranteed by Proposition \ref{proposition-8-new}\,{\rm (c)}. Conversely, if {\rm (ii)} holds true, then we get $\ell_\mathbf{B}(s_{e,e'})<+\infty$ for all $(e,e')\in \hat{E}$, whence (by Proposition \ref{proposition-8-new}\,{\rm (c)}) the set $\{e''\in E:e\prec_\mathbf{B}e''\prec_\mathbf{B}e'\}$ is finite.
This implies {\rm (iii)}.
\end{proof}

\subsubsection{Relation with parabolic subgroups}
In addition to the splitting Borel subgroup $\mathbf{B}$, we consider a splitting parabolic subgroup $\mathbf{P}\subset\mathbf{G}$ containing $\mathbf{H}$. Recall that the subgroup $\mathbf{P}$ gives rise (in fact, is equivalent) to each of the following data:
\begin{itemize}
\item an $E$-compatible generalized flag $\mathcal{F}$ (which is $\omega$-isotropic in the case of $\mathbf{G}=\mathbf{G}^\omega(E)$) such that $\mathbf{P}=\{g\in \mathbf{G}:g(\mathcal{F})=\mathcal{F}\}$;
\item a totally ordered set $(A,\preceq_A)$ and a surjective map $\sigma_0:E\to A$ such that $\mathcal{F}=\mathcal{F}_{\sigma_0}$ (which is equipped with an anti-automorphism $i_A:A\to A$ satisfying $\sigma_0\circ i_E=i_A\circ\sigma_0$ in the case of $\mathbf{G}=\mathbf{G}^\omega(E)$);
\item a partial order $\preceq_\mathbf{P}$ on $E$ satisfying property (\ref{10-new}), such that $e\prec_\mathbf{P}e'$ if and only if $\sigma_0(e)\prec_A\sigma_0(e')$.
\end{itemize}
Moreover, $\mathbf{P}$ gives rise to a subgroup of $\mathbf{W}$:
\[\mathbf{W}_\mathbf{P}=\{w\in\mathbf{W}:\sigma_0\circ w^{-1}=\sigma_0\}=\{w\in\mathbf{W}:e\not\prec_\mathbf{P}w(e)\mbox{ and }w(e)\not\prec_\mathbf{P}e,\ \forall e\in E\}.\]
Note that we do not assume that $\mathbf{B}$ is contained in $\mathbf{P}$.

\begin{lemma}
\label{lemma-4-new}
The following conditions are equivalent:
\begin{itemize}
\item[\rm (i)] $\mathbf{B}\subset\mathbf{P}$;
\item[\rm (ii)] for all $e,e'\in E$, $e\prec_\mathbf{P}e'\ \Rightarrow\ e\prec_\mathbf{B}e'$, i.e., the total order $\preceq_\mathbf{B}$ refines the partial order $\preceq_\mathbf{P}$;
\item[\rm (iii)] for all $e,e'\in E$, $e\preceq_\mathbf{B}e'\ \Rightarrow\ \sigma_0(e)\preceq_A\sigma_0(e')$,
i.e., the map $\sigma_0$ is nondecreasing.
\end{itemize}
\end{lemma}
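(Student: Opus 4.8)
**The plan is to prove Lemma \ref{lemma-4-new} by establishing the cycle of implications (i) $\Rightarrow$ (ii) $\Rightarrow$ (iii) $\Rightarrow$ (i), exploiting the dictionary between the splitting subgroups and the combinatorial data on the basis $E$.** The equivalence (ii) $\Leftrightarrow$ (iii) is essentially a reformulation: by the definition of $\preceq_\mathbf{P}$ recalled just above the lemma, we have $e\prec_\mathbf{P}e'$ precisely when $\sigma_0(e)\prec_A\sigma_0(e')$. Since $(A,\preceq_A)$ is totally ordered, the contrapositive of (ii) says that $e\prec_\mathbf{B}e'$ forces $\sigma_0(e)\preceq_A\sigma_0(e')$ (one cannot have $\sigma_0(e)\succ_A\sigma_0(e')$, as that would give $e'\prec_\mathbf{P}e$ and hence $e'\prec_\mathbf{B}e$). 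This is exactly the statement that $\sigma_0$ is nondecreasing with respect to $\preceq_\mathbf{B}$, so I would dispatch this equivalence in one short paragraph by unwinding the definitions of the two orders.

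The geometric content lies in the equivalence of (i) with the combinatorial conditions. For (i) $\Rightarrow$ (ii), I would recall that $\mathbf{B}=\mathbf{P}_{\mathcal{F}_0}$ where $\mathcal{F}_0$ is the maximal $E$-compatible generalized flag determined by the total order $\preceq_\mathbf{B}$, and $\mathbf{P}=\mathbf{P}_\mathcal{F}$ where $\mathcal{F}=\mathcal{F}_{\sigma_0}$. The inclusion $\mathbf{B}\subset\mathbf{P}$ means every element of $\mathbf{G}$ preserving $\mathcal{F}_0$ also preserves $\mathcal{F}$. Suppose for contradiction that $e\prec_\mathbf{P}e'$ but $e'\prec_\mathbf{B}e$ (or $e,e'$ are $\preceq_\mathbf{B}$-incomparable, which cannot happen since $\preceq_\mathbf{B}$ is total). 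Then the vectors can be separated by an element of the Cartan-normalizer, but more directly I would produce a unipotent element $u\in\mathbf{B}$ (a root-group element of the form $v\mapsto v+\lambda\phi(v)e'$ adding a multiple of $e'$ to the $e$-coordinate, which lies in $\mathbf{B}$ precisely because $e'\prec_\mathbf{B}e$) that fails to preserve $\mathcal{F}$, since $e\in F''_{\sigma_0,\sigma_0(e)}$ but $u(e)$ acquires a component along $e'\notin F''_{\sigma_0,\sigma_0(e)}$ (as $\sigma_0(e)\prec_A\sigma_0(e')$). This contradicts $u\in\mathbf{B}\subset\mathbf{P}$.

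For the converse (iii) $\Rightarrow$ (i), I would argue that if $\sigma_0$ is nondecreasing, then $\mathcal{F}_0$ refines $\mathcal{F}$ as a chain of subspaces, i.e. $\mathcal{F}\subset\mathcal{F}_0$: each subspace $F''_{\sigma_0,\alpha}=\langle e:\sigma_0(e)\preceq_A\alpha\rangle$ is of the form $F''_{0,e_\alpha}$ or $F'_{0,e}$ for an appropriate $\preceq_\mathbf{B}$-initial segment, precisely because the nondecreasing condition guarantees that the $\sigma_0$-fibers are $\preceq_\mathbf{B}$-intervals assembled in the correct order. Any $g\in\mathbf{B}$ preserves every member of $\mathcal{F}_0$, hence preserves every member of the subchain $\mathcal{F}$, giving $g\in\mathbf{P}$.

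The main obstacle is the $\omega$-isotropic case $\mathbf{G}=\mathbf{G}^\omega(E)$. Here the root groups are constrained by the form $\omega$, and the separating unipotent element used in (i) $\Rightarrow$ (ii) must be taken inside $\mathbf{G}^\omega(E)$; I would need the companion relation coming from $i_E$ being an anti-automorphism of $(E,\preceq_\mathbf{B})$ to ensure the element built from the pair $(e,e')$ and its $i_E$-image genuinely lies in $\mathbf{B}\subset\mathbf{G}^\omega(E)$. Likewise, in (iii) $\Rightarrow$ (i) the refinement $\mathcal{F}\subset\mathcal{F}_0$ must be checked to respect the $\omega$-isotropy condition, which follows from the compatibility $\sigma_0\circ i_E=i_A\circ\sigma_0$ together with the fact that $i_E$ reverses $\preceq_\mathbf{B}$; these conditions interlock so that the nondecreasing property is automatically consistent with the orthogonality structure. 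Organizing the unipotent-element construction uniformly so that both cases are handled by a single root-group argument is the step I expect to require the most care.
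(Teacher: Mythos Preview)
Your proposal is correct and follows the same route as the paper: the equivalence (ii)$\Leftrightarrow$(iii) is obtained by unwinding the definition of $\prec_\mathbf{P}$ in terms of $\sigma_0$, and the link with (i) comes from the dictionary between the inclusion $\mathbf{B}\subset\mathbf{P}$ and the containment $\mathcal{F}_{\sigma_0}\subset\mathcal{F}_0$. The paper's proof is considerably terser---it dispatches (i)$\Leftrightarrow$(iii) in a single clause ``by the definition of the generalized flag $\mathcal{F}_{\sigma_0}$'' without constructing explicit root-group elements and without a separate treatment of the $\omega$-isotropic case---so your worry about that case, while careful, goes beyond what the paper deems necessary.
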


\begin{proof}
By the definition of the generalized flag $\mathcal{F}_{\sigma_0}$, conditions {\rm (i)} and {\rm (iii)} are equivalent. Since the relation $e\prec_\mathbf{P}e'$ is equivalent to $\sigma_0(e')\not\preceq_A\sigma_0(e)$, we obtain that {\rm (ii)} and {\rm (iii)} are equivalent.
\end{proof}

For all $w\in\mathbf{W}$, we let
\begin{eqnarray*}
m_\mathbf{B}^\mathbf{P}(w)
% & = & \left\{\begin{array}{ll}
%|\{(e,e')\in\hat{E}:e\prec_\mathbf{B}e',\ \sigma_0\circ w(e)\succ_A\sigma_0\circ w(e')\}| & \mbox{if $\mathbf{G}=\mathbf{G}(E)$} \\
%|\{(e,e')\in\hat{E}:e\prec_\mathbf{B}e',\ e\prec_\mathbf{B}i_E(e),\ \sigma_0\circ w(e)\succ_A\sigma_0\circ w(e')\}| & \mbox{if $\mathbf{G}=\mathbf{G}^\omega(E)$}
%\end{array}\right. \\
 & = & \left\{\begin{array}{ll}
|\{(e,e')\in\hat{E}:e\prec_\mathbf{B}e',\ w(e)\succ_\mathbf{P} w(e')\}|  & \mbox{if $\mathbf{G}=\mathbf{G}(E)$} \\
|\{(e,e')\in\hat{E}:e\prec_\mathbf{B}e',\ e\prec_\mathbf{B}i_E(e),\ w(e)\succ_\mathbf{P} w(e')\}| & \mbox{if $\mathbf{G}=\mathbf{G}^\omega(E)$.}
\end{array}\right.
\end{eqnarray*}
Note that
\begin{equation}
\label{16-newnew}
m_\mathbf{B}^\mathbf{P}(w)=\left\{\begin{array}{ll}
n_\mathrm{inv}(\sigma_0\circ w) & \mbox{if $\mathbf{G}=\mathbf{G}(E)$} \\
n^\omega_\mathrm{inv}(\sigma_0\circ w) & \mbox{if $\mathbf{G}=\mathbf{G}^\omega(E)$}
\end{array}\right.
\end{equation}
(see Sections \ref{section-4-1}--\ref{section-4-2}).
We also know that $m_\mathbf{B}^\mathbf{B}(w)=\ell_\mathbf{B}(w)$ (see Proposition \ref{proposition-8-new}\,{\rm (b)}).
In the following proposition, we characterize the property that $\mathbf{B}$ is conjugate to a subgroup of $\mathbf{P}$ in terms of  $m_\mathbf{B}^\mathbf{P}(w)$.

\begin{proposition}
\label{proposition-9-new}
For $w\in\mathbf{W}$, recall that $\hat{w}\in\mathbf{G}$ is a representative of $w$ in $N_\mathbf{G}(\mathbf{H})$.
\begin{itemize}
\item[{\rm (a)}] We have $\mathbf{B}\subset \hat{w}\mathbf{P}\hat{w}^{-1}$ if and only if $m_\mathbf{B}^\mathbf{P}(w^{-1})=0$.
\item[{\rm (b)}] The following conditions are equivalent:
\begin{itemize}
\item[\rm (i)] there is $w\in\mathbf{W}$ such that $\mathbf{B}\subset \hat{w}\mathbf{P}\hat{w}^{-1}$;
\item[\rm (ii)] there is $w\in\mathbf{W}$ such that $m_\mathbf{B}^\mathbf{P}(w)<+\infty$.
\end{itemize}
\end{itemize}
\end{proposition}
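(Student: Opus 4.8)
The plan is to derive both parts from Lemma \ref{lemma-4-new}, applied not to $\mathbf{B}$ itself but to a $\mathbf{W}$-conjugate of it, and then to reduce the finiteness statement in (b) to the existence of a monotone representative obtained by a finite chain of adjacent transpositions. The whole argument rests on the fact that $m_\mathbf{B}^\mathbf{P}(w)$ equals $n_\mathrm{inv}(\sigma_0\circ w)$ (resp. $n^\omega_\mathrm{inv}(\sigma_0\circ w)$) by (\ref{16-newnew}), so that all statements can be phrased as counting inversions of the surjection $\sigma=\sigma_0\circ w$.

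For part (a), I would first rewrite $\mathbf{B}\subset\hat{w}\mathbf{P}\hat{w}^{-1}$ as $\hat{w}^{-1}\mathbf{B}\hat{w}\subset\mathbf{P}$. Since $\hat{w}$ normalizes $\mathbf{H}$, the conjugate $\hat{w}^{-1}\mathbf{B}\hat{w}$ is again a splitting Borel subgroup containing $\mathbf{H}$; tracking the maximal generalized flag under $\hat{w}$ shows that its associated total order $\preceq''$ is given by $f\preceq'' f'\iff w(f)\preceq_\mathbf{B}w(f')$. Applying the equivalence (i)$\Leftrightarrow$(ii) of Lemma \ref{lemma-4-new} to this Borel subgroup, the inclusion holds if and only if $\preceq''$ refines $\preceq_\mathbf{P}$, i.e. if and only if there is \emph{no} pair $(e,e')$ with $e\prec_\mathbf{P}e'$ and $w(e)\succ_\mathbf{B}w(e')$. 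The substitution $e=w^{-1}(b)$, $e'=w^{-1}(a)$ puts the set of such pairs in bijection with the set counted by $m_\mathbf{B}^\mathbf{P}(w^{-1})$ in the case $\mathbf{G}=\mathbf{G}(E)$; in the case $\mathbf{G}=\mathbf{G}^\omega(E)$ the additional constraint $e\prec_\mathbf{B}i_E(e)$ in the definition of $m_\mathbf{B}^\mathbf{P}$ simply selects one representative from each $i_E$-symmetric pair, so the equivalence persists. Hence $\mathbf{B}\subset\hat{w}\mathbf{P}\hat{w}^{-1}\iff m_\mathbf{B}^\mathbf{P}(w^{-1})=0$.

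For part (b), applying part (a) with $v=w^{-1}$ shows that (i) is equivalent to the existence of $v\in\mathbf{W}$ with $m_\mathbf{B}^\mathbf{P}(v)=0$, which trivially implies (ii). The real content is the converse: given $w$ with $m_\mathbf{B}^\mathbf{P}(w)<+\infty$, I must produce $v$ with $m_\mathbf{B}^\mathbf{P}(v)=0$. By (\ref{16-newnew}) this amounts to showing that the orbit element $\sigma=\sigma_0\circ w$, which has only finitely many inversions, can be transformed into a nondecreasing map by right-multiplying $w$ by finitely many generators $s_{e,e'}\in\mathbf{W}$, each step remaining in the orbit $\mathbf{W}\cdot\sigma_0$ and strictly lowering the inversion count.

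The key step, and the main obstacle, is that $(E,\preceq_\mathbf{B})$ need not be order-isomorphic to a subset of $(\mathbb{Z},\leq)$ and may contain infinite intervals, so a naive bubble sort is unavailable; what rescues the argument is the finiteness of the inversion set. I would exhibit an \emph{adjacent} inversion whenever $0<n_\mathrm{inv}(\sigma)<+\infty$ by a minimality argument: among the finitely many inversions choose $(e,e')$ so that no other inversion $(f,f')$ satisfies $e\preceq_\mathbf{B}f\prec_\mathbf{B}f'\preceq_\mathbf{B}e'$. If some $f$ lay strictly between $e$ and $e'$, then by minimality neither $(e,f)$ nor $(f,e')$ could be an inversion, forcing $\sigma(e)\preceq_A\sigma(f)\preceq_A\sigma(e')$ and contradicting $\sigma(e)\succ_A\sigma(e')$; hence $e$ and $e'$ are consecutive in $(E,\preceq_\mathbf{B})$. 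Swapping them via $t_{e,e'}$ (resp. $t^\omega_{e,e'}$ in the isotropic case) deletes the inversion $(e,e')$ and merely interchanges the inversion contributions of pairs involving $e$ and $e'$, so the count drops by exactly one; iterating at most $n_\mathrm{inv}(\sigma)$ times yields the required $v$. This reduction is precisely the one already performed in the inductive step of the proof of Proposition \ref{proposition-8-new}, which also carries out the isotropic bookkeeping, so in the case $\mathbf{G}=\mathbf{G}^\omega(E)$ I would invoke the same mechanism with $n^\omega_\mathrm{inv}$ in place of $n_\mathrm{inv}$.
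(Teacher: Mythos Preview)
Your proof of part (a) is correct and essentially the same as the paper's: both reduce to Lemma \ref{lemma-4-new}, the only difference being that you conjugate $\mathbf{B}$ and apply the lemma to $\hat{w}^{-1}\mathbf{B}\hat{w}\subset\mathbf{P}$, while the paper conjugates $\mathbf{P}$ and observes that $\hat{w}\mathbf{P}\hat{w}^{-1}$ is the stabilizer of $\mathcal{F}_{\sigma_0\circ w^{-1}}$.

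For part (b) your strategy is genuinely different from the paper's and, in the case $\mathbf{G}=\mathbf{G}(E)$, arguably cleaner. The paper argues on the \emph{image} side: it picks $w$ with $m_\mathbf{B}^\mathbf{P}(w)$ minimal, then (assuming this minimum is positive) takes an inversion $(e,e')$ with $e$ $\preceq_\mathbf{B}$-minimal and $e'$ $\preceq_\mathbf{B}$-maximal, shows that $\{i\in E':w(e)\succ_\mathbf{P}i\succ_\mathbf{P}w(e')\}$ is finite, and manufactures a transposition $s_{w(e'),w(e'')}$ on the target that strictly decreases $m_\mathbf{B}^\mathbf{P}$. You instead argue on the \emph{domain} side: picking an inversion minimal under nesting forces $e,e'$ to be consecutive in $(E,\preceq_\mathbf{B})$, and the adjacent transposition $t_{e,e'}$ drops the inversion count by exactly one. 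Your argument has the advantage of producing a simple transposition directly; the paper's has the advantage of working uniformly with the partial order $\preceq_\mathbf{P}$.

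The one place where your write-up is thin is the isotropic case. Saying that the reduction ``is precisely the one already performed in the inductive step of the proof of Proposition \ref{proposition-8-new}'' overstates the match: there the target order is the total order $\preceq_\mathbf{B}$, here it is the partial order $\preceq_\mathbf{P}$, and your nesting argument needs a word about why an element $f$ strictly between $e$ and $e'$ with $f\succ_\mathbf{B}i_E(f)$ (or $f=i_E(f)$) cannot obstruct consecutiveness, and why applying $t^\omega_{e,e'}=t_{e,e'}\circ t_{i_E(e),i_E(e')}$ still strictly decreases $n^\omega_\mathrm{inv}$. These points can be handled (using that $i_E$ is an anti-automorphism of $(E,\preceq_\mathbf{B})$, so $i_E(e),i_E(e')$ are also consecutive and the two swaps do not interfere), but you should spell this out rather than defer to Proposition \ref{proposition-8-new}.
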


\begin{proof}
Note that $\hat{w}\mathbf{P}\hat{w}^{-1}\subset\mathbf{G}$ is the isotropy subgroup of the generalized flag $\mathcal{F}_{\sigma_0\circ w^{-1}}$.
Thus part {\rm (a)} follows from Lemma \ref{lemma-4-new} and the definition of  $m_\mathbf{B}^\mathbf{P}(w^{-1})$. \\
{\rm (b)} The implication {\rm (i)}$\Rightarrow${\rm (ii)} follows from part {\rm (a)}. Now assume that {\rm (ii)} holds. Choose $w\in\mathbf{W}$ such that $m_\mathbf{B}^\mathbf{P}(w)$ is minimal.
By {\rm (a)}, it suffices to show that $m_\mathbf{B}^\mathbf{P}(w)=0$.
Assume, to the contrary, that $m_\mathbf{B}^\mathbf{P}(w)>0$.
Hence there is a couple $(e,e')\in\hat{E}$ satisfying $e\prec_\mathbf{B}e'$, $w(e)\succ_\mathbf{P}w(e')$. We can assume that $e$ is minimal such that there is $e'$ with this property, and that $e'$ is maximal possible.
We claim that
\begin{equation}
\label{claim-16}
\mbox{the set $\{i\in E':w(e)\succ_\mathbf{P}i\succ_\mathbf{P}w(e')\}$ is finite.}
\end{equation}
Otherwise,\ there are infinitely many $i\in E$ for which $w(e)\succ_\mathbf{P}i=w(i)\succ_\mathbf{P}w(e')$. By the minimality of $e$, we have $e\prec_\mathbf{B}i$. Whence there are infinitely many couples $(e,i)\in\hat{E}$ with $e\prec_\mathbf{B}i$ and $w(e)\succ_\mathbf{P}w(i)$ (in the case of $\mathbf{G}=\mathbf{G}^\omega(E)$, up to replacing $(e,i)$ by $(i_E(i),i_E(e))$, we may also assume that $e\prec_\mathbf{B}i_E(e)$). Consequently, $m_\mathbf{B}^\mathbf{P}(w)=+\infty$, a contradiction. This establishes (\ref{claim-16}).

By (\ref{claim-16}) we can find $i\in E'$ minimal (with respect to the order $\preceq_\mathbf{P}$) such that $w(e)\succeq_\mathbf{P}i\succ_\mathbf{P}w(e')$. Let $e''\in E$ with $w(e'')=i$. The maximality of $e'$ forces $e''\prec_\mathbf{B}e'$.
Altogether, we have found a couple $(e'',e')\in\hat{E}$ such that $e''\prec_\mathbf{B}e'$, $w(e'')\succ_\mathbf{P}w(e')$, and $w(e'')$ is minimal  (with respect to the order $\preceq_\mathbf{P}$).
For $f\in E$, let $C_\mathbf{P}(f)$ denote the class of $f$ for the equivalence relation defined in (\ref{10-new}).
We may assume that $e''$ and $e'$ are respectively a minimal element of $w^{-1}(C_\mathbf{P}(w(e'')))$ and a maximal element of $w^{-1}(C_\mathbf{P}(w(e')))$ (with respect to the order $\preceq_\mathbf{B}$).
Moreover, in the case of $\mathbf{G}=\mathbf{G}^\omega(E)$, up to replacing $(e'',e')$ by $(i_E(e'),i_E(e''))$, we may assume that $e''\prec_\mathbf{B}i_E(e'')$.
Then it is straightforward to check that
\begin{eqnarray*}
  \{(f,f')\in\hat{E}:f\prec_\mathbf{B}f',\ s_{w(e'),w(e'')}w(f)\succ_\mathbf{P} s_{w(e'),w(e'')}w(f')\} \\ \subset \{(f,f')\in\hat{E}: f\prec_\mathbf{B}f',\ w(f)\succ_\mathbf{P}w(f')\}\setminus\{(e'',e')\}.
\end{eqnarray*}
Whence $m_\mathbf{B}^\mathbf{P}(s_{w(e'),w(e'')}w)<m_\mathbf{B}^\mathbf{P}(w)$, which contradicts the minimality of $m_\mathbf{B}^\mathbf{P}(w)$.
\end{proof}

Finally, the following proposition points out the relation between  $m_\mathbf{B}^\mathbf{P}(w)$ and  $\ell_\mathbf{B}(w)$.

\begin{proposition}
\label{proposition-10-new}
Assume that there is $w_0\in\mathbf{W}$ such that $m_\mathbf{B}^\mathbf{P}(w_0^{-1})=0$.
Then, for all $w\in\mathbf{W}$, we have
\[m_\mathbf{B}^\mathbf{P}(w)=\inf\{\ell_\mathbf{B}(w_0w'w):w'\in\mathbf{W}_\mathbf{P}\}.\]
\end{proposition}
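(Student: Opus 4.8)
The plan is to reduce to the case $\mathbf{B}\subset\mathbf{P}$ and then pass to the limit from the finite-dimensional Schubert calculus. First I would interpret the hypothesis: by Proposition~\ref{proposition-9-new}\,{\rm (a)}, the condition $m_\mathbf{B}^\mathbf{P}(w_0^{-1})=0$ means exactly $\mathbf{B}\subset\mathbf{P}':=\hat{w}_0\mathbf{P}\hat{w}_0^{-1}$, which is a splitting parabolic containing $\mathbf{H}$, with associated surjection $\sigma_0':=\sigma_0\circ w_0^{-1}$ and subgroup $\mathbf{W}_{\mathbf{P}'}=w_0\mathbf{W}_\mathbf{P}w_0^{-1}$. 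Directly from the definitions one checks that $f\succ_{\mathbf{P}'}f'$ is equivalent to $w_0^{-1}f\succ_\mathbf{P}w_0^{-1}f'$, whence $m_\mathbf{B}^\mathbf{P}(w)=m_\mathbf{B}^{\mathbf{P}'}(w_0w)$; substituting $\mathbf{W}_{\mathbf{P}'}=w_0\mathbf{W}_\mathbf{P}w_0^{-1}$ and writing $u=w_0w'w_0^{-1}$ turns $\inf_{u\in\mathbf{W}_{\mathbf{P}'}}\ell_\mathbf{B}(u\,w_0w)$ into $\inf_{w'\in\mathbf{W}_\mathbf{P}}\ell_\mathbf{B}(w_0w'w)$. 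Hence it suffices to prove the identity in the case $w_0=e$, $\mathbf{B}\subset\mathbf{P}$, namely $m_\mathbf{B}^\mathbf{P}(w)=\inf_{w'\in\mathbf{W}_\mathbf{P}}\ell_\mathbf{B}(w'w)$ for every splitting parabolic containing $\mathbf{B}$ and every $w$, and to apply it to $\mathbf{P}'$ and $w_0w$.

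So assume $\mathbf{B}\subset\mathbf{P}$. By Lemma~\ref{lemma-4-new} the order $\preceq_\mathbf{B}$ refines $\preceq_\mathbf{P}$, so the fibers $C_\alpha:=\sigma_0^{-1}(\alpha)$ are $\preceq_\mathbf{B}$-convex blocks, ordered according to $A$. Using the length formula of Proposition~\ref{proposition-8-new}\,{\rm (b)}, I would split the inversions of an arbitrary $v\in\mathbf{W}$ according to whether $v(e),v(e')$ lie in the same block or not: since $\preceq_\mathbf{B}$ refines $\preceq_\mathbf{P}$, a cross-block pair with $v(e)\succ_\mathbf{B}v(e')$ is precisely a pair with $v(e)\succ_\mathbf{P}v(e')$. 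This gives $\ell_\mathbf{B}(v)=m_\mathbf{B}^\mathbf{P}(v)+N(v)$, where $N(v)\ge 0$ counts the within-block inversions. Taking $v=w'w$ with $w'\in\mathbf{W}_\mathbf{P}$ and using $\sigma_0\circ w'=\sigma_0$ (so $m_\mathbf{B}^\mathbf{P}(w'w)=m_\mathbf{B}^\mathbf{P}(w)$), we obtain $\ell_\mathbf{B}(w'w)=m_\mathbf{B}^\mathbf{P}(w)+N(w'w)\ge m_\mathbf{B}^\mathbf{P}(w)$, whence $\inf_{w'}\ell_\mathbf{B}(w'w)\ge m_\mathbf{B}^\mathbf{P}(w)$. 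In particular the identity holds, both sides being $+\infty$, as soon as $m_\mathbf{B}^\mathbf{P}(w)=\infty$.

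It remains to prove $\inf_{w'}\ell_\mathbf{B}(w'w)\le m:=m_\mathbf{B}^\mathbf{P}(w)$ when $m<\infty$; since $\ell_\mathbf{B}$ is integer-valued this amounts to producing a single $w'\in\mathbf{W}_\mathbf{P}$ with $N(w'w)=0$, i.e.\ one finite-support permutation sorting $w$ within every block at once. Here I would descend to the finite levels. For $n$ large, $\mathbf{W}_\mathbf{P}\cap W_n$ is the parabolic subgroup of $W_n$ attached to $P_n=\mathbf{P}\cap G_n\supset B_n=\mathbf{B}\cap G_n$, and writing $m_{\mathbf{B},n}^\mathbf{P}(w)$ for the number of inversions counted in $m_\mathbf{B}^\mathbf{P}(w)$ whose two entries lie in $E_n$, the standard Schubert calculus recalled in Section~\ref{section-5-2} yields $m_{\mathbf{B},n}^\mathbf{P}(w)=\min_{u\in\mathbf{W}_\mathbf{P}\cap W_n}\ell_{\mathbf{B},n}(uw)$, attained at the minimal-length coset representative $z_n=u_nw$. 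Moreover $m_\mathbf{B}^\mathbf{P}(w)=\lim_n m_{\mathbf{B},n}^\mathbf{P}(w)$, so $m_{\mathbf{B},n}^\mathbf{P}(w)=m$ for all $n\ge N_0$. The crucial point is that the $z_n$ take only finitely many values: each $z_n$ has exactly $m$ inversions, so its support has size at most $2m$ and each non-fixed $e$ is separated from $z_n(e)$ by only finitely many elements of $(E,\preceq_\mathbf{B})$; combined with $\sigma_0\circ z_n=\sigma_0\circ w$ this should confine all supports to a fixed finite region near $\mathrm{supp}(w)$. Granting this, some value $z_\infty$ recurs for infinitely many $n$, and then $\ell_\mathbf{B}(z_\infty)=\lim_n\ell_{\mathbf{B},n}(z_\infty)=m$ by Proposition~\ref{proposition-8-new}\,{\rm (a)}, so that $w':=z_\infty w^{-1}\in\mathbf{W}_\mathbf{P}$ is the desired element. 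The main obstacle is exactly this confinement: one must rule out that the within-block sorting performed by $u_n$ drifts off to infinity as $n$ grows, and this is precisely where the finiteness of $m$ (equivalently, that $\mathbf{B}$ is conjugate into $\mathbf{P}$) is essential. The isotropic case $\mathbf{G}=\mathbf{G}^\omega(E)$ is handled identically, using the formulas of Section~\ref{section-4-2} and the $\omega$-version of the length formula in Proposition~\ref{proposition-8-new}\,{\rm (b)}.
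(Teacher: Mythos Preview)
Your reduction to the case $\mathbf{B}\subset\mathbf{P}$, $w_0=\mathrm{id}$ is correct and matches the paper exactly. Your lower bound via the splitting $\ell_\mathbf{B}(v)=m_\mathbf{B}^\mathbf{P}(v)+N(v)$ is also correct and is essentially the paper's inequality chain, stated a bit more conceptually.

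The gap is precisely where you flag it: the confinement of the supports of the $z_n$. You argue that $\ell_{\mathbf{B},n}(z_n)=m$ forces $|\mathrm{supp}(z_n)|\le 2m$, which is fine, and then assert that each non-fixed $e$ is separated from $z_n(e)$ by only finitely many elements of $(E,\preceq_\mathbf{B})$. But that last claim is a statement about $\ell_\mathbf{B}(z_n)$ (Proposition~\ref{proposition-8-new}\,(c)), not about $\ell_{\mathbf{B},n}(z_n)$; you only control the inversions of $z_n$ inside $E_n$, and nothing so far prevents within-block inversions of $z_n$ with one leg outside $E_n$. Concretely, $z_n$ is the unique $\preceq_\mathbf{B}$-increasing bijection $w^{-1}(C_\alpha)\cap E_n\to C_\alpha\cap E_n$ on each block; when a new element $e^*\in C_\gamma\setminus\mathrm{supp}(w)$ is added, the bijection is unchanged only if $e^*$ occupies the same rank in $w^{-1}(C_\gamma)\cap E_{n+1}$ as in $C_\gamma\cap E_{n+1}$, and this fails exactly when $e^*$ lies, for the order $\preceq_\mathbf{B}$, strictly between some elements of $w^{-1}(C_\gamma)\setminus C_\gamma$ and $C_\gamma\setminus w^{-1}(C_\gamma)$. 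Ruling this out for all but finitely many $e^*$ requires knowing that only finitely many elements of the relevant block lie in those $\preceq_\mathbf{B}$-intervals, and that finiteness is not a formal consequence of $m<\infty$ together with $|\mathrm{supp}(z_n)|\le 2m$; it is exactly the content of the paper's Claim~2 (which uses $m<\infty$ to show that each element of $I_\alpha^\pm(w)$ is $\preceq_\mathbf{B}$-close to an end of its block).

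The paper therefore takes a different route for the upper bound: rather than extracting a limit of finite minimal representatives, it constructs the desired $w'\in\mathbf{W}_\mathbf{P}$ directly in two stages. Claim~1 replaces $w$ by a coset representative with no ``same-block'' moves ($\sigma_0(e)=\sigma_0(w(e))\Rightarrow w(e)=e$); Claim~2 then uses $m<\infty$ to push, within each block, the finitely many displaced elements to positions near the appropriate end of the block, after which a further finite permutation $w''\in\mathbf{W}_\mathbf{P}$ sorts them and kills all within-block inversions. This constructive argument is doing precisely the work that your confinement step would need, so your proposal does not bypass the difficulty; to complete it you would end up proving something equivalent to Claims~1--2.
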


\begin{proof}
Note that, for all $e,e'\in E'$, we have $e\prec_\mathbf{P}e'$ if and only if $w_0(e)\prec_{\hat{w_0}\mathbf{P}\hat{w_0}^{-1}}w_0(e')$.
This yields $m_\mathbf{B}^\mathbf{P}(w)=m_\mathbf{B}^{\hat{w_0}\mathbf{P}\hat{w_0}^{-1}}(w_0w)$
and $w_0\mathbf{W}_{\mathbf{P}}w_0^{-1}=\mathbf{W}_{\hat{w_0}\mathbf{P}\hat{w_0}^{-1}}$. Thus, invoking also Proposition \ref{proposition-9-new}\,{\rm (a)}, up to replacing $\mathbf{P}$ by $\hat{w_0}\mathbf{P}\hat{w_0}^{-1}$, we may suppose that $\mathbf{B}\subset\mathbf{P}$ and $w_0=\mathrm{id}$.

By the definition of $\mathbf{W}_\mathbf{P}$, Lemma \ref{lemma-4-new}, and Proposition \ref{proposition-8-new}\,{\rm (b)}, for every $w'\in\mathbf{W}_\mathbf{P}$ we obtain
\begin{eqnarray*}
m_\mathbf{B}^\mathbf{P}(w) & = & |\{(e,e')\in\hat{E}_\mathbf{B}:\sigma_0(w(e))\succ_A\sigma_0(w(e'))\}| \\
 & = & |\{(e,e')\in\hat{E}_\mathbf{B}:\sigma_0(w'w(e))\succ_A \sigma_0(w'w(e))\}| \\
 & \leq & |\{(e,e')\in\hat{E}_\mathbf{B}:w'w(e)\succ_\mathbf{B}w'w(e')\}|=\ell_\mathbf{B}(w'w), \end{eqnarray*}
where $\hat{E}_\mathbf{B}=\{(e,e')\in\hat{E}:e\prec_\mathbf{B}e'\}$ if $\mathbf{G}=\mathbf{G}(E)$,
and $\hat{E}_\mathbf{B}=\{(e,e')\in\hat{E}:e\prec_\mathbf{B}e',\ e\prec_\mathbf{B}i_E(e)\}$ if $\mathbf{G}=\mathbf{G}^\omega(E)$.
If $m_\mathbf{B}^\mathbf{P}(w)=+\infty$, the result is established.
So we assume next that $m_\mathbf{B}^\mathbf{P}(w)<+\infty$.

\smallskip
\noindent
{\it Claim 1:} There is $w'\in\mathbf{W}_\mathbf{P}$ such that the set $\mathcal{I}(w'w):=\{e\in E:\sigma_0(e)=\sigma_0(w'w(e))$ and $w'w(e)\not=e\}$ is empty.

For any $w'\in\mathbf{W}_\mathbf{P}$, the set $\mathcal{I}(w'w)$ is finite. Let $w'\in\mathbf{W}_\mathbf{P}$ such that $|\mathcal{I}(w'w)|$ is minimal. We claim that $\mathcal{I}(w'w)=\emptyset$. For otherwise, assume that there is $e\in \mathcal{I}(w'w)$. Thus $\sigma_0(w'w(e))=e$.
Either $\sigma_0((w'w)^\ell(e))=\sigma_0(e)$ for all $\ell\in\mathbb{Z}$, or there is $\ell\in\mathbb{Z}$ such that $\sigma_0((w'w)^{\ell-1}(e))\not=\sigma_0((w'w)^\ell(e))=\sigma_0((w'w)^{\ell+1}(e))$.
In the former case we set $w''=s_{(w'w)^{m-2}(e),(w'w)^{m-1}(e)}\cdots$ $\cdots s_{(w'w)(e),(w'w)^2(e)}s_{e,(w'w)(e)}$,
where $m\geq 2$ is minimal such that $(w'w)^m(e)=e$.
In the latter case we set $w''=s_{(w'w)^{\ell}(e),(w'w)^{\ell+1}(e)}$.
In both cases one has $w''\in\mathbf{W}_{\mathbf{P}}$, and it easy to check that $\mathcal{I}(w''w'w)\varsubsetneq \mathcal{I}(w'w)$, a contradiction. Hence Claim 1 holds.

\smallskip

Note that $m_\mathbf{B}^\mathbf{P}(w'w)=m_\mathbf{B}^\mathbf{P}(w)$. Up to dealing with $w'w$ instead of $w$, we may assume that $\mathcal{I}(w)=\emptyset$.
For $\alpha\in A$, let
$I_\alpha(w)=\{e\in \sigma_0^{-1}(\alpha):w(e)\not=e\}$. Since $\mathcal{I}(w)=\emptyset$, one has $I_\alpha(w)=I_\alpha^+(w)\sqcup I_\alpha^-(w)$ with
\[I_\alpha^+(w)=\{e\in \sigma_0^{-1}(\alpha):\sigma_0(w^{-1}(e))\succ_A \alpha\}\quad\mbox{and}\quad I_\alpha^-(w)=\{e\in \sigma_0^{-1}(\alpha):\sigma_0(w^{-1}(e))\prec_A \alpha\}.\]

\smallskip
\noindent
{\it Claim 2:}
There is $w'\in\mathbf{W}_\mathbf{P}$ with $w'(e)=e$ whenever $w(e)=e$, and satisfying the following property:
for every $\alpha\in A$, the set $\{e'\in\sigma_0^{-1}(\alpha):w'(e)\prec_\mathbf{B}e'\}$ is finite whenever $e\in I_\alpha^+(w)$, and the set $\{e'\in\sigma_0^{-1}(\alpha):w'(e)\succ_\mathbf{B}e'\}$ is finite whenever $e\in I_\alpha^-(w)$.

Let $e\in I_\alpha^+(w)$. There is $\ell(e)\geq 2$ minimal such that $\sigma_0(w^{-\ell(e)}(e))\preceq_A\alpha$. Since $m_\mathbf{B}^\mathbf{P}(w)<+\infty$, the set $\{e'\in\sigma_0^{-1}(\alpha):w^{-\ell(e)}(e)\prec_\mathbf{B}e'\}$ is finite. Set $w'(e)=w^{-\ell(e)}(e)$. Similarly, given $e\in I_\alpha^-(w)$, there is $m(e)\geq 2$ minimal such that $\sigma_0(w^{-m(e)}(e))\succeq_A\alpha$, and the set $\{e'\in\sigma_0^{-1}(\alpha):w^{-m(e)}(e)\succ_\mathbf{B}e'\}$ is finite; we set $w'(e)=w^{-m(e)}(e)$ in this case. If $e\in \sigma_0^{-1}(\alpha)\setminus I_\alpha(w)$, we set $w'(e)=e$. It is readily seen that the so-obtained map $w':\sigma_0^{-1}(\alpha)\to\sigma_0^{-1}(\alpha)$ is bijective. Collecting these maps for all $\alpha\in A$, we obtain an element $w'\in\mathbf{W}_\mathbf{P}$ satisfying the desired properties. This shows Claim 2.

\smallskip

Set $\hat{w}=w'w$ with $w'\in\mathbf{W}_\mathbf{P}$ as in Claim 2.
For every $\alpha\in A$, the set
\begin{eqnarray*}
J_\alpha(\hat{w})=\{e\in\sigma_0^{-1}(\alpha):(\exists e'\in\sigma_0^{-1}(\alpha)\mbox{ with }e'\preceq_\mathbf{B}e\mbox{ and }\sigma_0(\hat{w}^{-1}(e'))\succ_A\alpha) \\ \mbox{ or }(\exists e'\in\sigma_0^{-1}(\alpha)\mbox{ with }e'\succeq_\mathbf{B}e\mbox{ and }\sigma_0(\hat{w}^{-1}(e'))\prec_A\alpha)\}
\end{eqnarray*}
is finite (by Claim 2). We write $J_\alpha(\hat{w})=\{e_i^\alpha\}_{i=1}^{k_\alpha}$
so that $\hat{w}^{-1}(e_1^\alpha)\prec_\mathbf{B}\ldots\prec_\mathbf{B}\hat{w}^{-1}(e_{k_\alpha}^\alpha)$.
There is $w''\in\mathbf{W}_\mathbf{P}$ with $w''(e)=e$ whenever $e\notin\bigcup_{\alpha\in A}J_\alpha(\hat{w})$ and such that
\[w''(J_\alpha(\hat{w}))=J_\alpha(\hat{w})\ \ \mbox{and}\ \ w''(e_1^\alpha)\prec_\mathbf{B}\ldots\prec_\mathbf{B}w''(e_{k_\alpha}^\alpha)\ \mbox{ for all $\alpha\in A$.}\]
Taking the construction of $w''$ into account, one can check that there is no couple $(e,e')\in\hat{E}$ with $e\prec_\mathbf{B}e'$, $w''\hat{w}(e)\succ_\mathbf{B}w''\hat{w}(e')$, and $\sigma_0(w''\hat{w}(e))=\sigma_0(w''\hat{w}(e'))$. Therefore, $m_\mathbf{B}^\mathbf{P}(w)=\ell_\mathbf{B}(w''\hat{w})=\ell_\mathbf{B}((w''w')w)$ with $w''w'\in\mathbf{W}_\mathbf{P}$. The proof is complete.
\end{proof}

\subsection{Review of (finite-dimensional) flag varieties}
\label{section-5-2}
We consider an $E$-compatible generalized flag $\mathcal{F}=\mathcal{F}_{\sigma_0}$
corresponding to a surjection $\sigma_0:E\to A$. Let $I\subset E$ be a finite subset (resp., $i_E$-stable, if the form $\omega$ is considered).
In this section we recall standard properties of the Schubert decomposition of the flag varieties $\mathrm{Fl}(\mathcal{F},I)$ and $\mathrm{Fl}(\mathcal{F},\omega,I)$ (see Section \ref{section-3-3}). We refer to \cite{Billey-Lakshmibai,Brion,Lak-book} for more details.

\begin{proposition}
\label{proposition-11-new}
Let $\mathbf{G}=\mathbf{G}(E)$.
Let $\mathbf{B}$ be a splitting Borel subgroup of $\mathbf{G}$ containing $\mathbf{H}$ and let $B(I):=G(I)\cap\mathbf{B}$ be the corresponding Borel subgroup of the group $G(I)$. Let $H(I)=G(I)\cap\mathbf{H}$. Let $W(I)\subset\mathbf{W}$ be the Weyl group of $G(I)$. \\
{\rm (a)} We have the decomposition
\[\mathrm{Fl}(\mathcal{F},I)=\bigcup_{w\in W(I)}B(I)\mathcal{F}_{\sigma_0\circ w^{-1}}.\]
Moreover, $\mathcal{F}_{\sigma_0\circ w^{-1}}$ is the unique element of $B(I)\mathcal{F}_{\sigma_0\circ w^{-1}}$ fixed by the maximal torus $H(I)$. \\
{\rm (b)} Each subset $B(I)\mathcal{F}_{\sigma_0\circ w^{-1}}$, for $w\in W(I)$, is a locally closed subvariety isomorphic to an affine space of dimension $|\{(e,e')\in I\times I:e\prec_\mathbf{B}e',\ \sigma_0\circ w^{-1}(e')\prec_A\sigma_0\circ w^{-1}(e)\}|$. \\
{\rm (c)} Given $w,w'\in W(I)$, the inclusion $B(I)\mathcal{F}_{\sigma_0\circ w^{-1}}\subset
\overline{B(I)\mathcal{F}_{\sigma_0\circ w'^{-1}}}$ holds if and only if $\sigma_0\circ w^{-1}\leq\sigma_0\circ w'^{-1}$ for the order $\leq$ defined in Section \ref{section-4-1}. \\
{\rm (d)} Let $J\subset E$ be another finite subset such that $I\subset J$. Let $\iota_{I,J}:\mathrm{Fl}(\mathcal{F},I)\hookrightarrow\mathrm{Fl}(\mathcal{F},J)$ be the embedding constructed in Section \ref{section-3-3}.
Then, for all $w\in W(I)$, the image of the Schubert cell $B(I)\mathcal{F}_{\sigma_0\circ w^{-1}}$ by the map $\iota_{I,J}$ is an affine subspace of $B(J)\mathcal{F}_{\sigma_0\circ w^{-1}}$.
\end{proposition}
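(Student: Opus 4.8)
The plan is to observe that $\mathrm{Fl}(\mathcal{F},I)$ is an ordinary partial flag variety of the finite-dimensional space $\langle I\rangle$, so that parts (a)--(c) are instances of the classical Bruhat decomposition, while part (d) is an elementary but genuinely new observation about how Schubert cells behave under the embedding $\iota_{I,J}$.

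For (a), I would fix the reference flag $\mathcal{F}|_I$ to identify $\mathrm{Fl}(\mathcal{F},I)$ with the homogeneous space $G(I)/P(I)$, where $P(I)=G(I)\cap\mathbf{P}_\mathcal{F}$ is the stabilizer of $\mathcal{F}|_I$; here $B(I)$ is the Borel subgroup attached to the total order $\preceq_\mathbf{B}$ on $I$ and $H(I)$ its maximal torus with eigenbasis $I$. The $H(I)$-fixed points of $\mathrm{Fl}(\mathcal{F},I)$ are exactly the flags compatible with the basis $I$, i.e.\ the flags $\mathcal{F}_{\sigma_0\circ w^{-1}}|_I$ for $w\in W(I)$, and two of them coincide precisely when the classes of $w$ in $W(I)/(W(I)\cap\mathbf{W}_\mathbf{P})$ agree. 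The decomposition $G(I)=\bigsqcup_w B(I)\hat w P(I)$ then yields the disjoint union in (a), each cell containing the stated fixed point as its unique $H(I)$-fixed element.

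Parts (b) and (c) are then read off from the standard theory of Schubert cells in $G/P$ (see \cite{Billey-Lakshmibai,Brion,Lak-book}). The dimension of $B(I)\mathcal{F}_{\sigma_0\circ w^{-1}}$ equals the number of positive roots of $G(I)$ sent to negative roots; under the dictionary identifying positive roots with ordered pairs $e\prec_\mathbf{B}e'$ in $I$, this is exactly the inversion count in (b). For (c), I would check that the covering relation $\hat{<}$ from Section \ref{section-4-1}---defined by transpositions $t_{e,e'}$ with $e\prec_\mathbf{B}e'$ and $\sigma(e)\prec_A\sigma(e')$---coincides on $\{\sigma_0\circ w^{-1}:w\in W(I)\}$ with the covering relation of the Bruhat order on $W(I)/(W(I)\cap\mathbf{W}_\mathbf{P})$, so that the closure order matches $\leq$.

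The crux is (d), which I would prove by equivariance rather than by coordinates. Since $B(I)\subset G(I)\subset G(J)$ acts on $\langle J\rangle$ fixing $J\setminus I$ pointwise, and since $\iota_{I,J}$ enlarges each flag by the $B(I)$-fixed subspaces $\langle e\in J\setminus I:\sigma_0(e)\prec\alpha\rangle$, the map $\iota_{I,J}$ is $B(I)$-equivariant. As $w\in W(I)$ fixes $J\setminus I$, the fixed point $\mathcal{F}_{\sigma_0\circ w^{-1}}|_I$ is carried to $\mathcal{F}_{\sigma_0\circ w^{-1}}|_J$; hence $\iota_{I,J}(B(I)\mathcal{F}_{\sigma_0\circ w^{-1}})=B(I)\cdot\mathcal{F}_{\sigma_0\circ w^{-1}}|_J\subset B(J)\mathcal{F}_{\sigma_0\circ w^{-1}}$. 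Finally, identifying the target cell with the unipotent subgroup $U_w(J)$ whose root groups are indexed by the inversions of $\sigma_0\circ w^{-1}$ lying in $J$, the $B(I)$-orbit is the subproduct over inversions lying in $I$, which is a coordinate affine subspace. The main obstacle is exactly this last step: one must confirm that the $B(I)$-orbit sits inside the larger Bruhat cell as a product of root subgroups, i.e.\ as an affine subspace, which follows from the fact that the relevant unipotent radical is directly spanned by its root subgroups and that those indexed by $I$ form a subgroup.
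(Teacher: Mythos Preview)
Your approach is correct and matches the paper's treatment: the paper does not give a proof of this proposition at all, but simply states it as a review of standard facts about finite-dimensional flag varieties, citing \cite{Billey-Lakshmibai,Brion,Lak-book}. Your sketch for (a)--(c) is exactly the kind of identification with the classical Bruhat decomposition that those references contain, and your observation that (d) is the one genuinely non-textbook ingredient---requiring the $B(I)$-equivariance of $\iota_{I,J}$ and the root-subgroup description of the cells---is apt; the paper leaves this implicit but uses it later (e.g.\ in the proof of Theorem~\ref{theorem-4}, where the cells are explicitly parametrized by unipotent subgroups $B_\tau(I)\subset B_\tau(J)$).
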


\begin{proposition}
\label{proposition-12-new}
Let $\mathbf{G}=\mathbf{G}^\omega(E)$. Let $\mathbf{B}$ be a splitting Borel subgroup of $\mathbf{G}$ containing $\mathbf{H}$ and let $B^\omega(I):=G^\omega(I)\cap\mathbf{B}$ be the corresponding Borel subgroup of the group $G^\omega(I)$. Let $H^\omega(I)=G^\omega(I)\cap \mathbf{H}$. Let $W^\omega(I)\subset\mathbf{W}$ be the Weyl group of $G^\omega(I)$. \\
{\rm (a)} We have the decomposition
\[\mathrm{Fl}(\mathcal{F},\omega,I)=\bigcup_{w\in W^\omega(I)}B^\omega(I)\mathcal{F}_{\sigma_0\circ w^{-1}}.\]
Moreover, $\mathcal{F}_{\sigma_0\circ w^{-1}}$ is the unique element of $B^\omega(I)\mathcal{F}_{\sigma_0\circ w^{-1}}$ fixed by the maximal torus $H^\omega(I)$. \\
{\rm (b)} Each subset $B^\omega(I)\mathcal{F}_{\sigma_0\circ w^{-1}}$, for $w\in W^\omega(I)$, is a locally closed subvariety isomorphic to an affine space of dimension $|\{(e,e')\in I\times I:e\prec_\mathbf{B}e',\ e\prec_\mathbf{B}i_E(e),\ e'\not=i_E(e'),\ \sigma_0\circ w^{-1}(e')\prec_A\sigma_0\circ w^{-1}(e)\}|$. \\
{\rm (c)} Given $w,w'\in W^\omega(I)$, the inclusion $B^\omega(I)\mathcal{F}_{\sigma_0\circ w^{-1}}\subset
\overline{B^\omega(I)\mathcal{F}_{\sigma_0\circ w'^{-1}}}$ holds if and only if $\sigma_0\circ w^{-1}\leq_\omega\sigma_0\circ w'^{-1}$, for the order $\leq_\omega$ defined in Section \ref{section-4-2}. \\
{\rm (d)} Let $J\subset E$ be another $i_E$-stable finite subset such that $I\subset J$. Let $\iota^\omega_{I,J}:\mathrm{Fl}(\mathcal{F},\omega,I)\hookrightarrow\mathrm{Fl}(\mathcal{F},\omega,J)$ be the embedding constructed in Section \ref{section-3-3}.
Then, for all $w\in W^\omega(I)$, the image of the Schubert cell $B^\omega(I)\mathcal{F}_{\sigma_0\circ w^{-1}}$ by the map $\iota^\omega_{I,J}$ is an affine subspace of $B^\omega(J)\mathcal{F}_{\sigma_0\circ w^{-1}}$.
\end{proposition}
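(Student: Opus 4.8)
The plan is to reduce the entire statement to the classical finite-dimensional theory of Schubert cells for the group $G^\omega(I)$, and then to translate the root-theoretic data into the basis-pair combinatorics of the statement, exactly as in the type-$A$ case of Proposition \ref{proposition-11-new}. First I would observe that, under the standing type-$B$ convention (Section \ref{section-2-2}), the restriction of $\omega$ to $\langle I\rangle$ is nondegenerate and the variety $\mathrm{Fl}(\mathcal{F},\omega,I)$ is connected and homogeneous under $G^\omega(I)$; identifying it with $G^\omega(I)/P^\omega(I)$, where $P^\omega(I):=\{g\in G^\omega(I):g(\mathcal{F}|_I)=\mathcal{F}|_I\}$ is the parabolic stabilizer of $\mathcal{F}|_I$, the pair $(B^\omega(I),H^\omega(I))$ becomes a Borel subgroup with its maximal torus, and the Weyl group $W_P\subset W^\omega(I)$ of the Levi of $P^\omega(I)$ is a parabolic subgroup. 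Part (a) is then the Bruhat decomposition $G^\omega(I)=\bigsqcup_{wW_P}B^\omega(I)\hat w P^\omega(I)$: the $B^\omega(I)$-orbits are the Schubert cells, the $H^\omega(I)$-fixed points of $\mathrm{Fl}(\mathcal{F},\omega,I)$ are exactly the $E$-compatible (equivalently $\mathbf{H}$-stable) flags $\mathcal{F}_{\sigma_0\circ w^{-1}}$, one per coset in $W^\omega(I)/W_P$, and each Schubert cell contains a unique such fixed point. Since $\mathcal{F}_{\sigma_0\circ w^{-1}}$ depends only on the coset $wW_P$, the union over all $w\in W^\omega(I)$ coincides with the union over cosets, giving the stated form.

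For part (b) I would invoke the standard fact that $\dim B^\omega(I)\mathcal{F}_{\sigma_0\circ w^{-1}}$ equals the length $\ell(w^P)$ of the minimal-length representative of $wW_P$, and that this length counts the positive roots $\alpha\in R^+$ with $w^{-1}(\alpha)\in R^-$, all of which lie outside the root system of the Levi of $P^\omega(I)$. The remaining work is a bookkeeping translation of this count for the types $C$, $B$, $D$ into pairs $(e,e')\in I\times I$: positive roots are encoded by the condition $e\prec_\mathbf{B}i_E(e)$ (choosing one representative of each $\pm$ pair of $i_E$-orbits), the exclusion of the type-$B$ short/fixed vector is encoded by $e'\neq i_E(e')$, the order $e\prec_\mathbf{B}e'$ records positivity, and $\sigma_0\circ w^{-1}(e')\prec_A\sigma_0\circ w^{-1}(e)$ records that the corresponding root is sent to a negative root not coming from $P^\omega(I)$. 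Carrying this identification out carefully over all three types reproduces the stated count, which is the finite restriction to $I\times I$ of the number $n^\omega_\mathrm{inv}$ of Section \ref{section-4-2}.

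For part (c) the closure order of Schubert cells in $G^\omega(I)/P^\omega(I)$ is the Bruhat order on $W^\omega(I)/W_P$, with $\overline{B^\omega(I)\hat wP^\omega(I)}=\bigsqcup_{v\leq w}B^\omega(I)\hat vP^\omega(I)$; it then suffices to identify the combinatorial relation $\leq_\omega$ of Section \ref{section-4-2} with this Bruhat order on the flags $\mathcal{F}_{\sigma_0\circ w^{-1}}$. This follows from the Chevalley--Ehresmann characterization of the Bruhat order by chains of length-increasing reflection multiplications: the covering move $\sigma\,\hat{<}_\omega\,\tau$, with $\tau=\sigma\circ t^\omega_{e,e'}$, $e\prec_\mathbf{B}e'$ and $\sigma(e)\prec_A\sigma(e')$, is precisely such a step after passing to minimal coset representatives (which is automatic, since $\sigma$ already encodes the coset). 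For part (d), the embedding $\iota^\omega_{I,J}$ adjoins to each subspace the fixed coordinate directions spanned by the vectors of $J\setminus I$ according to $\sigma_0$; since $w\in W^\omega(I)$ fixes $J\setminus I$, one has $\sigma_0\circ w^{-1}=\sigma_0$ on $J\setminus I$, so a direct check gives $\iota^\omega_{I,J}(\mathcal{F}_{\sigma_0\circ w^{-1}}|_I)=\mathcal{F}_{\sigma_0\circ w^{-1}}|_J$ with $w$ now viewed in $W^\omega(J)$ via $W^\omega(I)\hookrightarrow W^\omega(J)$. As $\iota^\omega_{I,J}$ is affine-linear in the standard matrix coordinates on a Schubert cell (it merely inserts constant entries), the image of $B^\omega(I)\mathcal{F}_{\sigma_0\circ w^{-1}}$ is a coordinate affine subspace of $B^\omega(J)\mathcal{F}_{\sigma_0\circ w^{-1}}$.

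The main obstacle is the combinatorial translation in part (b): unlike the type-$A$ case of Proposition \ref{proposition-11-new}, the root systems of types $B$, $C$, $D$ require care in matching positive roots with $i_E$-orbit representatives and in handling the type-$B$ short root (the condition $e'\neq i_E(e')$), so that the root-theoretic length formula reproduces exactly the stated inversion count. Verifying in part (c) that the combinatorial covering relation $\hat{<}_\omega$ corresponds to a Bruhat cover of the parabolic quotient, rather than merely to a length-increasing step in $W^\omega(I)$, is the other delicate point; it is handled via the lifting property of the Bruhat order along $W^\omega(I)\to W^\omega(I)/W_P$. Everything else is a direct application of the standard finite-dimensional theory, for which I would refer to \cite{Billey-Lakshmibai,Brion,Lak-book}.
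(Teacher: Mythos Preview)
Your proposal is correct and aligns with the paper's approach: the paper presents Proposition~\ref{proposition-12-new} as a review of standard finite-dimensional Schubert theory and simply refers to \cite{Billey-Lakshmibai,Brion,Lak-book} without giving a proof. Your sketch supplies exactly the details one would fill in from those references---the Bruhat decomposition of $G^\omega(I)/P^\omega(I)$, the root-theoretic length formula translated into the $i_E$-equivariant inversion count, the identification of $\leq_\omega$ with the parabolic Bruhat order, and the affine-linear nature of $\iota^\omega_{I,J}$ on cells---so you have in fact gone further than the paper does.
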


\subsection{Proof of Lemmas \ref{lemma-2-new} and \ref{lemma-3}}

\label{section-5-3}

We consider the map
\[\phi:\mathbf{W}(E)\to\mathbf{Fl}_A(V),\ w\mapsto\mathcal{F}_{\sigma_0\circ w^{-1}}\]
and, in the proof of Lemma \ref{lemma-3}, we also consider its restriction $\phi^\omega:\mathbf{W}^\omega(E)\to\mathbf{Fl}_A^\omega(V)$.

\begin{proof}[Proof of Lemma \ref{lemma-2-new}]
Let $\mathbf{Fl}'(\mathcal{F},E)\subset\mathbf{Fl}(\mathcal{F},E)$ denote the subset of $E$-compatible generalized flags.
By definition the generalized flag $\phi(w)$ is $E$-compatible for all $w\in\mathbf{W}(E)$. Moreover, it is easily seen that $\phi(w)=\hat{w}(\mathcal{F}_{\sigma_0})$ where $\hat{w}\in\mathbf{G}(E)$ is the element for which $\hat{w}(e)=w(e)$ for all $e\in E$. Thus $\phi(w)$ is $E$-commensurable with $\mathcal{F}=\mathcal{F}_{\sigma_0}$ (see Proposition \ref{proposition-2}). Consequently, $\phi(w)\in\mathbf{Fl}'(\mathcal{F},E)$ for all $w\in\mathbf{W}(E)$.

Conversely, let $\mathcal{G}\in\mathbf{Fl}'(\mathcal{F},E)$. Choosing $n$ such that $\mathcal{G}\in\mathrm{Fl}(\mathcal{F},E_n)$, we have that $\mathcal{G}$ is fixed by the maximal torus $H(E_n)\subset G(E_n)$. Using Proposition \ref{proposition-11-new}\,{\rm (a)}, we
find $w\in W(E_n)\subset\mathbf{W}(E)$ such that $\mathcal{G}=\mathcal{F}_{\sigma_0\circ w^{-1}}=\phi(w)$.

Finally, for $w,w'\in\mathbf{W}(E)$, we have $\phi(w)=\phi(w')$ if and only if $\sigma_0\circ w^{-1}=\sigma_0\circ w'^{-1}$, and the latter condition reads as $w'^{-1}w\in\mathbf{W}_\mathbf{P}(E)$. Therefore, $\phi$ induces a bijection $\mathbf{W}(E)/\mathbf{W}_\mathbf{P}(E)\to\mathbf{Fl}'(\mathcal{F},E)$.
\end{proof}

\begin{proof}[Proof of Lemma \ref{lemma-3}]
Let $\mathbf{Fl}'(\mathcal{F},\omega,E)=\mathbf{Fl}'(\mathcal{F},E)\cap \mathbf{Fl}(\mathcal{F},\omega,E)$.
From Lemma \ref{lemma-2-new} we know that $\phi^\omega(w)$ is $E$-compatible and $E$-commensurable with $\mathcal{F}=\mathcal{F}_{\sigma_0}$, whence $\phi^\omega(w)\in\mathbf{Fl}'(\mathcal{F},\omega,E)$ for all $w\in\mathbf{W}^\omega(E)$.

Let $\mathcal{G}\in\mathbf{Fl}'(\mathcal{F},\omega,E)$. Choosing $n$ such that $\mathcal{G}\in \mathrm{Fl}(\mathcal{F},\omega,E_n)$, we have that $\mathcal{G}$ is a fixed point of the maximal torus $H^\omega(E_n)\subset G^\omega(E_n)$, hence we can find $w\in W^\omega(E_n)$ such that $\mathcal{G}=\mathcal{F}_{\sigma_0\circ w^{-1}}=\phi^\omega(w)$.

As in the proof of Lemma \ref{lemma-2-new} it is easy to conclude that $\phi^\omega$ induces a bijection $\mathbf{W}^\omega(E)/\mathbf{W}_\mathbf{P}^\omega(E)\to\mathbf{Fl}'(\mathcal{F},\omega,E)$.
\end{proof}

\subsection{Proof of Theorems \ref{theorem-1} and \ref{theorem-2}}

\label{section-5-4}

\begin{proof}[Proof of Theorem \ref{theorem-1}]
Recall the exhaustions (\ref{rel3}) and (\ref{rel8-newnew}) of the ind-group $\mathbf{G}(E)$ and the ind-variety $\mathbf{Fl}(\mathcal{F},E)$.
For all $n\geq 1$, the subgroups $H(E_n):=G(E_n)\cap\mathbf{H}(E)$, $B_n:=G(E_n)\cap\mathbf{B}$, and $P_n:=G(E_n)\cap\mathbf{P}$ are respectively a maximal torus, a Borel subgroup, and a parabolic subgroup of $G(E_n)$.

{\rm (a)} Let $\mathcal{G}\in\mathbf{Fl}(\mathcal{F},E)$. By Proposition \ref{proposition-11-new}\,{\rm (a)}, for any $n\geq 1$ large enough so that $\mathcal{G}\in\mathrm{Fl}(\mathcal{F},E_n)$,
the $B_n$-orbit of $\mathcal{G}$ contains a unique element of the form $\mathcal{F}_{\sigma_0\circ w^{-1}}$ with $w\in W(E_n)$.
Therefore, every element $\mathcal{G}\in\mathbf{Fl}(\mathcal{F},E)$ lies in the $\mathbf{B}$-orbit of $\mathcal{F}_\sigma$ for a unique $\sigma\in\mathbf{W}(E)\cdot\sigma_0$.

{\rm (b)}
Let $\mathcal{G}=\{G'_\alpha,G''_\alpha:\alpha\in A\}\in\mathbf{Fl}(\mathcal{F},E)$.
According to part {\rm (a)} of the proof, there is a unique $\sigma\in\mathbf{W}(E)\cdot\sigma_0$ such that $\mathcal{G}\in\mathbf{B}\mathcal{F}_\sigma$, say $\mathcal{G}=b(\mathcal{F}_\sigma)$, where $b\in\mathbf{B}$. Thus
\[G''_\alpha\cap F'_{0,e}=b(F''_{\sigma,\alpha}\cap F'_{0,e})\quad\mbox{and}\quad G''_\alpha\cap F''_{0,e}=b(F''_{\sigma,\alpha}\cap F''_{0,e})\]
(because $F'_{0,e},F''_{0,e}$ are $b$-stable). This clearly implies that $\sigma_\mathcal{G}=\sigma_{\mathcal{F}_\sigma}$.
Moreover, from the definition of $\mathcal{F}_\sigma$ we see that $F''_{\sigma,\alpha}\cap F''_{0,e}\not= F''_{\sigma,\alpha}\cap F'_{0,e}$ if and only if $\sigma(e)\preceq_A\alpha$.
Whence $\sigma(e)=\min\{\alpha\in A:F''_{\sigma,\alpha}\cap F''_{0,e}\not= F''_{\sigma,\alpha}\cap F'_{0,e}\}=\sigma_{\mathcal{F}_\sigma}(e)$ for all $e\in E$. Thus $\sigma_{\mathcal{G}}=\sigma$. Note that the last equality guarantees in particular that $\sigma_\mathcal{G}\in\mathfrak{S}(E,A)$.

{\rm (c)}
follows from Proposition \ref{proposition-11-new}\,{\rm (b)} and {\rm (d)}.

{\rm (d)}
We consider $\sigma,\tau\in\mathbf{W}(E)\cdot\sigma_0$
and let $n\geq 1$ be such that $\mathcal{F}_\sigma,\mathcal{F}_\tau\in\mathrm{Fl}(\mathcal{F},E_n)$.
Assume that $\sigma\hat{<}\tau$, i.e., $\tau=\sigma\circ t_{e,e'}$ for a pair $(e,e')\in E\times E$ with $e\prec_\mathbf{B}e'$ and $\sigma(e)\preceq_A\sigma(e')$. Up to choosing $n$ larger if necessary, we may assume that $e,e'\in E_n$. Then, by Proposition \ref{proposition-11-new}\,{\rm (c)}, we get $B_n\mathcal{F}_\sigma\subset\overline{B_n\mathcal{F}_\tau}$.\ Whence
$\mathbf{B}\mathcal{F}_\sigma\subset\overline{\mathbf{B}\mathcal{F}_\tau}$.
This argument also shows that the latter inclusion holds whenever $\sigma\leq\tau$.
Conversely, assume that $\mathcal{F}_\sigma\in\overline{\mathbf{B}\mathcal{F}_\tau}$.
Hence  $\mathcal{F}_\sigma\in\overline{B_n\mathcal{F}_\tau}$ for $n\geq 1$ large enough. Once again, by Proposition \ref{proposition-11-new}\,{\rm (c)}, this yields $\sigma\leq\tau$. The proof of Theorem \ref{theorem-1} is complete.
\end{proof}

\begin{proof}[Proof of Theorem \ref{theorem-2}]
The proof of Theorem \ref{theorem-2} follows exactly the same scheme as the proof of Theorem \ref{theorem-1}, relying this time on Proposition \ref{proposition-12-new} instead of Proposition \ref{proposition-11-new}.
We skip the details.
\end{proof}

\subsection{Proof of Theorem \ref{theorem-3}}

\label{section-5-5}

\begin{proof}[Proof of Theorem \ref{theorem-3}]
{\rm (a)} Condition {\rm (i)} means that there is $g\in \mathbf{G}$ such that $\mathbf{B}\subset g\mathbf{P}g^{-1}$. This equivalently means that the element $g\mathbf{P}\in\mathbf{G}/\mathbf{P}$ is fixed by $\mathbf{B}$, i.e., that $\mathbf{G}/\mathbf{P}$ comprises a $\mathbf{B}$-orbit reduced to a single point. We have shown the equivalence {\rm (i)}$\Leftrightarrow${\rm (iii)}.
The implication {\rm (iii)}$\Rightarrow${\rm (ii)} is immediate, while
the implication {\rm (ii)}$\Rightarrow${\rm (i)} follows from Proposition \ref{proposition-9-new}, relation (\ref{16-newnew}), and Theorems \ref{theorem-1}{\rm (c)}--\ref{theorem-2}\,{\rm (c)}. \\
{\rm (b)} The implication {\rm (i)}$\Rightarrow${\rm (ii)} is a consequence of part {\rm (a)}, Corollary \ref{corollary-8-new}, Proposition \ref{proposition-10-new}, relation (\ref{16-newnew}), and Theorems \ref{theorem-1}\,{\rm (c)}--\ref{theorem-2}\,{\rm (c)}. Assume that {\rm (ii)} holds. From part (a), there is $g\in\mathbf{G}$ such that $\mathbf{B}\subset g\mathbf{P}g^{-1}$. Up to dealing with $g\mathbf{P}g^{-1}$ instead of $\mathbf{P}$, we may assume that $\mathbf{B}\subset\mathbf{P}$. Arguing by contradiction, say that $(E,\preceq_\mathbf{B})$ is not isomorphic to a subset of $(\mathbb{Z},\leq)$. Thus there are $e,e'\in E$ such that the set $\{e''\in E:e\prec_\mathbf{B}e''\prec_\mathbf{B}e'\}$ is infinite. Since the surjective map $\sigma_0:E\to A$, corresponding to $\mathbf{P}$, is nondecreasing (by Lemma \ref{lemma-4-new}) and nonconstant (because $\mathbf{P}\not=\mathbf{G}$), we find $\hat{e},\hat{e}'$ with $\hat{e}\preceq_\mathbf{B}e\prec_\mathbf{B}e'\preceq_\mathbf{B}\hat{e}'$ such that $\sigma_0(\hat{e})\prec_A\sigma_0(\hat{e}')$. Then, $\dim\mathbf{B}\mathcal{F}_{\sigma_0\circ s_{\hat{e},\hat{e}'}}=+\infty$ (by Theorems \ref{theorem-1}\,{\rm (c)}--\ref{theorem-2}\,{\rm (c)}), a contradiction.
\end{proof}

\section{Smoothness of Schubert ind-varieties}

In this section $\mathbf{G}$ is one of the ind-groups $\mathbf{G}(E)$ or $\mathbf{G}^\omega(E)$
and $\mathbf{B}$ is a splitting Borel subgroup of $\mathbf{G}$ which contains the
splitting Cartan subgroup $\mathbf{H}=\mathbf{H}(E)$ or $\mathbf{H}^\omega(E)$.
We consider the {\em Schubert ind-varieties} defined as the closures of the Schubert cells
$\mathbf{B}\mathcal{F}_\sigma$ in the ind-varieties of generalized flags $\mathbf{Fl}(\mathcal{F},E)$ or
$\mathbf{Fl}(\mathcal{F},\omega,E)$.
Specifically, we study the smoothness of Schubert ind-varieties.
The general principle (Theorem \ref{theorem-4}) is straightforward: the ind-variety $\overline{\mathbf{B}\mathcal{F}_\sigma}$ is smooth if and only if its  intersections with suitable finite-dimensional flag subvarieties
of $\mathbf{Fl}(\mathcal{F},E)$ or
$\mathbf{Fl}(\mathcal{F},\omega,E)$ are smooth. Note however that this fact is not immediate: see Remark \ref{remark-1} below. As an example, in Section \ref{section-4.2} we give a combinatorial interpretation of this result in the case of ind-varieties of maximal generalized flags and in the case of ind-grassmannians.

\subsection{General facts on the smoothness of ind-varieties}

\label{section-6-1}

The notion of smooth point of an ind-variety is defined in Section \ref{section-2-1}.
We refer to \cite[Chapter 4]{K} or \cite{S} for more details.
In this section, for later use, we present some general facts regarding the smoothness of ind-varieties.

We start with the following simple smoothness criterion (see \cite{K}).

\begin{lemma}
\label{lemma-4} Let $\mathbf{X}$ be an ind-variety
with an exhaustion $\mathbf{X}=\bigcup_{n\geq 1}X_n$.
Let $x\in \mathbf{X}$.
Suppose that there is a subsequence $\{X_{n_k}\}_{k\geq 1}$ such that $x$  is a smooth point of $X_{n_k}$ for all $k\geq 1$.
Then $x$ is a smooth point of $\mathbf{X}$. In particular, if $\mathbf{X}$ admits an exhaustion by smooth varieties, then $\mathbf{X}$ is smooth.
\end{lemma}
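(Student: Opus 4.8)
The plan is to read the statement directly off the definition of smoothness recalled in Section~\ref{section-2-1}, reducing everything to the elementary fact that an inverse limit is unchanged when its index set is replaced by a cofinal subset. Writing $j$ for the degree, so as not to clash with the subsequence index $k$, recall that $x$ is smooth in $\mathbf{X}$ exactly when the map
\[
\hat\varphi_j=\lim_{\leftarrow}\varphi_{n,j}\colon\ \lim_{\leftarrow}S^j(\mathfrak{m}_{n,x}/\mathfrak{m}_{n,x}^2)\to\lim_{\leftarrow}\mathfrak{m}_{n,x}^j/\mathfrak{m}_{n,x}^{j+1}
\]
is an isomorphism for every $j\geq 1$, the inverse limits being taken over $n$ with transition maps induced by the inclusions $X_n\hookrightarrow X_{n+1}$.

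First I would note that the subsequence $\{X_{n_k}\}_{k\geq 1}$ indexes a cofinal subsystem of each of the two inverse systems $\{S^j(\mathfrak{m}_{n,x}/\mathfrak{m}_{n,x}^2)\}_n$ and $\{\mathfrak{m}_{n,x}^j/\mathfrak{m}_{n,x}^{j+1}\}_n$ occurring in the definition of $\hat\varphi_j$, simply because the set $\{n_k:k\geq 1\}$ is unbounded in $\mathbb{N}$. Since restricting an inverse system to a cofinal subset does not change its limit, both limits may be computed along the subsequence, and under this identification $\hat\varphi_j$ becomes $\lim_{\leftarrow}\varphi_{n_k,j}$, the limit now taken over $k$. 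Here I use that $\{\varphi_{n,j}\}_n$ is a morphism of inverse systems, as built into the definition in Section~\ref{section-2-1}.

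Then I would invoke the hypothesis: $x$ is a smooth point of each $X_{n_k}$, which by the finite-dimensional criterion means that $\varphi_{n_k,j}$ is an isomorphism for every $k\geq 1$ and every $j\geq 1$. The componentwise inverses are automatically compatible with the transition maps, so $\{\varphi_{n_k,j}^{-1}\}_k$ is again a morphism of inverse systems, and passing to the limit shows that $\hat\varphi_j$ is an isomorphism, with inverse $\lim_{\leftarrow}\varphi_{n_k,j}^{-1}$. As this holds for all $j\geq 1$, the point $x$ is smooth in $\mathbf{X}$. The only step requiring any care is the cofinality identification of the two inverse limits; once it is in place the argument is purely formal, and one never needs the transition maps to be injective. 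Finally, the ``in particular'' clause is the special case in which the subsequence is the whole sequence $\{X_n\}_{n\geq 1}$: if every $X_n$ is smooth, then $x$ is a smooth point of each $X_n$ for every $x\in\mathbf{X}$, and the first part applies verbatim.
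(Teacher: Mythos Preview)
Your proof is correct. The paper does not give its own proof of this lemma; it merely cites Kumar's book \cite{K} and treats the result as standard. Your argument---replacing the inverse system by the cofinal subsystem indexed by $\{n_k\}$ and then observing that an inverse limit of isomorphisms is an isomorphism---is exactly the natural way to verify the statement from the definition of smoothness given in Section~\ref{section-2-1}, and nothing more is needed.
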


\begin{example}
It easily follows from Lemma \ref{lemma-4} that the infinite-dimensional affine space
$\mathbb{A}^\infty$ and the infinite-dimensional projective space $\mathbb{P}^\infty$ are
smooth. More generally,
it follows from Propositions \ref{proposition-3-3-3}--\ref{proposition-3-3-2} and Lemma \ref{lemma-4} that
%if $\mathbf{G}$ is one of the ind-groups $\mathbf{G}(E)$ or $\mathbf{G}^\omega(E)$ and $\mathbf{P}$ is a splitting parabolic subgroup of $\mathbf{G}$ containing the standard splitting Cartan subgroup $\mathbf{H}$, then the homogeneous space $\mathbf{G}/\mathbf{P}$ is smooth. Thus
the ind-varieties of the form $\mathbf{Fl}(\mathcal{F},E)$ and $\mathbf{Fl}(\mathcal{F},\omega,E)$ are smooth.
\end{example}

\begin{remark}
\label{remark-1}
The converse of Lemma \ref{lemma-4} is clearly false. Consider for
instance $\mathbf{X}=\mathbb{A}^\infty=\bigcup_{n\geq 1}\mathbb{A}^n$ and let $x\in\mathbb{A}^1$. For each $n\geq 1$, let $X'_n\subset\mathbb{A}^{n+1}$ be an $n$-dimensional affine subspace containing $x$ and distinct of $\mathbb{A}^n$, and set $X_n=\mathbb{A}^n\cup X'_n$. The subvarieties $X_n$ exhaust $\mathbb{A}^\infty$.
Clearly $x$ is a singular point of every $X_n$. However
$x$ is a smooth point of $\mathbb{A}^\infty$ (which is a smooth ind-variety).
\end{remark}

The following partial converse of Lemma \ref{lemma-4} is used in Section \ref{section-6-2} for studying the smoothness of Schubert ind-varieties.

\begin{lemma}
\label{lemma-2} Let $\mathbf{X}$ be an ind-variety and let $\mathbf{X}=\bigcup_{n\geq 1}X_n$ be an exhaustion by algebraic varieties. Assume that each inclusion $X_n\subset X_{n+1}$ has a
left inverse $r_n:X_{n+1}\to X_n$ in the category of algebraic varieties. Then, if $x\in \mathbf{X}$ is a singular point of $X_{n_0}$ for
some $n_0\geq 1$, $x$ is a singular point of $\mathbf{X}$.
\end{lemma}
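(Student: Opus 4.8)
The plan is to promote the left inverses $r_n$ to ring-theoretic sections at the level of the local rings at $x$, and then thread these sections through the entire inverse-limit apparatus that defines smoothness in Section \ref{section-2-1}, so that a singularity of $X_{n_0}$ cannot be ``absorbed'' in the limit. Write $\iota_n\colon X_n\hookrightarrow X_{n+1}$ for the inclusion. Since $r_n\circ\iota_n=\mathrm{id}_{X_n}$, the point $x$ (viewed in $X_{n+1}$) satisfies $r_n(x)=r_n(\iota_n(x))=x$, so passing to stalks gives local ring homomorphisms $\iota_n^*\colon\mathcal{O}_{X_{n+1},x}\to\mathcal{O}_{X_n,x}$ and $r_n^*\colon\mathcal{O}_{X_n,x}\to\mathcal{O}_{X_{n+1},x}$ with $\iota_n^*\circ r_n^*=(r_n\circ\iota_n)^*=\mathrm{id}$. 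Thus $\iota_n^*$ is a surjection split by $r_n^*$. Both maps send maximal ideal to maximal ideal, hence induce split surjections on the cotangent spaces $\mathfrak{m}_{n,x}/\mathfrak{m}_{n,x}^2$, on their symmetric powers, and on the graded pieces $\mathfrak{m}_{n,x}^k/\mathfrak{m}_{n,x}^{k+1}$; and, being ring homomorphisms, $\iota_n^*$ and $r_n^*$ are compatible with the multiplication maps $\varphi_{n,k}$ of (\ref{**}).

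First I would record the resulting data. Let $a_n$ denote the transition map induced by $\iota_n^*$ (the map used to form the inverse limits in Section \ref{section-2-1}) and $b_n$ the section induced by $r_n^*$, so $a_n\circ b_n=\mathrm{id}$, on each of the three families $\mathfrak{m}_{n,x}/\mathfrak{m}_{n,x}^2$, $S^k(\mathfrak{m}_{n,x}/\mathfrak{m}_{n,x}^2)$, and $\mathfrak{m}_{n,x}^k/\mathfrak{m}_{n,x}^{k+1}$. Iterating, set $b_{n_0,m}=b_{m-1}\circ\cdots\circ b_{n_0}$ for $m\geq n_0$ and let $a_{n_0,m}$ be the analogous composite of transition maps for $m\le n_0$.

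The key step is to build explicit linear injections into the inverse limits. Given $c\in S^{k}(\mathfrak{m}_{n_0,x}/\mathfrak{m}_{n_0,x}^2)$, define a sequence whose $n$-th term is $S^k(b_{n_0,n})(c)$ for $n\geq n_0$ and $S^k(a_{n_0,n})(c)$ for $n<n_0$. Using $a_n\circ b_n=\mathrm{id}$ one checks directly that this sequence is compatible with the transition maps $S^k(a_n)$, hence defines an element of $\lim_{\leftarrow}S^k(\mathfrak{m}_{n,x}/\mathfrak{m}_{n,x}^2)$; this yields a map $s^{S}_{k}$, which is injective because composing it with the projection to the $n_0$-th factor recovers the identity. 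The same recipe produces an injection $s^{G}_{k}$ into $\lim_{\leftarrow}\mathfrak{m}_{n,x}^k/\mathfrak{m}_{n,x}^{k+1}$. The compatibility of $\varphi_{n,k}$ with the sections $b_n$ (a consequence of $r_n^*$ being a ring homomorphism, iterated) gives $\varphi_{n,k}\circ S^k(b_{n_0,n})=b_{n_0,n}^{k}\circ\varphi_{n_0,k}$ for $n\ge n_0$, and comparing components shows the commuting relation $\hat\varphi_k\circ s^{S}_{k}=s^{G}_{k}\circ\varphi_{n_0,k}$.

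Finally I would conclude. Since $x$ is singular in $X_{n_0}$, the epimorphism $\varphi_{n_0,k_0}$ fails to be an isomorphism for some $k_0$, hence has nonzero kernel; choose $0\neq c\in\ker\varphi_{n_0,k_0}$. Then $s^{S}_{k_0}(c)\neq 0$ by injectivity of $s^{S}_{k_0}$, while $\hat\varphi_{k_0}(s^{S}_{k_0}(c))=s^{G}_{k_0}(\varphi_{n_0,k_0}(c))=0$. Thus $\hat\varphi_{k_0}$ is not injective, hence not an isomorphism, and $x$ is a singular point of $\mathbf{X}$. The only delicate point is the construction of the sections into the inverse limit together with their commutation with $\hat\varphi_k$; once the left inverses are upgraded to the ring-theoretic sections $r_n^*$, everything else is formal. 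This is precisely what forbids the pathology of Remark \ref{remark-1}, where a singularity disappears in the limit because transverse smooth directions are added at each stage.
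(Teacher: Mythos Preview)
Your proof is correct and follows essentially the same approach as the paper's own argument: both lift the retractions $r_n$ to sections $r_n^*$ on local rings, use these to construct an explicit nonzero element of $\lim_{\leftarrow}S^k(\mathfrak{m}_{n,x}/\mathfrak{m}_{n,x}^2)$ from a chosen $c\in\ker\varphi_{n_0,k}$ (by pushing forward via the sections for $n\ge n_0$ and via the transition maps for $n<n_0$), and then verify this element lies in $\ker\hat\varphi_k$ using the compatibility of the sections with the maps $\varphi_{n,k}$. Your presentation packages the construction as linear injections $s^S_k$, $s^G_k$ into the inverse limits with the commuting square $\hat\varphi_k\circ s^S_k=s^G_k\circ\varphi_{n_0,k}$, while the paper writes out the single element directly, but the content is identical.
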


\begin{proof}
We start with a preliminary fact.
Let $Y$ be an algebraic variety and $Z\subset Y$ be a subvariety such that there is a retraction $r:Y\to Z$, i.e., a left inverse of the inclusion map $i:Z\hookrightarrow Y$. Let $x\in Z$.
We consider the local rings $\mathcal{O}_{Z,x}$, $\mathcal{O}_{Y,x}$ and their maximal ideals $\mathfrak{m}_{Z,x}$, $\mathfrak{m}_{Y,x}$.
The map $r$ induces a ring homomorphism  $r^*:\mathcal{O}_{Z,x}\to\mathcal{O}_{Y,x}$ such that
 $r^*(\mathfrak{m}_{Z,x}^k)\subset\mathfrak{m}_{Y,x}^k$ for all $k\geq 1$.  Thus $r^*$ induces maps
\[r_{Z,k}:S^k(\mathfrak{m}_{Z,x}/\mathfrak{m}^2_{Z,x})\to S^k(\mathfrak{m}_{Y,x}/\mathfrak{m}^2_{Y,x})
\quad\mbox{and}\quad
\tilde{r}_{Z,k}:\mathfrak{m}^k_{Z,x}/\mathfrak{m}^{k+1}_{Z,x}\to \mathfrak{m}^k_{Y,x}/\mathfrak{m}^{k+1}_{Y,x},\]
which are respective right inverses of the maps
$i_{Z,k}:S^k(\mathfrak{m}_{Y,x}/\mathfrak{m}^2_{Y,x})\to S^k(\mathfrak{m}_{Z,x}/\mathfrak{m}^2_{Z,x})$
and $\tilde{i}_{Z,k}:\mathfrak{m}^k_{Y,x}/\mathfrak{m}^{k+1}_{Y,x}\to \mathfrak{m}^k_{Z,x}/\mathfrak{m}^{k+1}_{Z,x}$
induced by the inclusion $i:Z\hookrightarrow Y$. Moreover the diagrams
\[
\begin{array}{ccc}
S^k(\mathfrak{m}_{Z,x}/\mathfrak{m}^2_{Z,x}) & \stackrel{\varphi_{Z,k}}{\longrightarrow} & \mathfrak{m}^k_{Z,x}/\mathfrak{m}^{k+1}_{Z,x} \\
\mbox{\scriptsize $r_{Z,k}$}\downarrow & & \downarrow\mbox{\scriptsize $\tilde{r}_{Z,k}$} \\
S^k(\mathfrak{m}_{Y,x}/\mathfrak{m}^2_{Y,x}) & \stackrel{\varphi_{Y,k}}{\longrightarrow} & \mathfrak{m}^k_{Y,x}/\mathfrak{m}^{k+1}_{Y,x}
\end{array}
\quad\mbox{and}\quad
\begin{array}{ccc}
S^k(\mathfrak{m}_{Y,x}/\mathfrak{m}^2_{Y,x}) & \stackrel{\varphi_{Y,k}}{\longrightarrow} & \mathfrak{m}^k_{Y,x}/\mathfrak{m}^{k+1}_{Y,x} \\
\mbox{\scriptsize $i_{Z,k}$}\downarrow & & \downarrow\mbox{\scriptsize $\tilde{i}_{Z,k}$} \\
S^k(\mathfrak{m}_{Z,x}/\mathfrak{m}^2_{Z,x}) & \stackrel{\varphi_{Z,k}}{\longrightarrow} & \mathfrak{m}^k_{Z,x}/\mathfrak{m}^{k+1}_{Z,x}
\end{array}
\]
are commutative, where $\varphi_{Z,k}$ and $\varphi_{Y,k}$ are defined in a natural way.

In the setting of the lemma, for every $n\geq 1$, we denote $\mathfrak{m}_{n,x}:=\mathfrak{m}_{X_n,x}$. The retraction $r_n:X_{n+1}\to X_n$ induces maps $r_{n,k}:S^k(\mathfrak{m}_{n,x}/\mathfrak{m}_{n,x}^2)\to S^k(\mathfrak{m}_{n+1,x}/\mathfrak{m}_{n+1,x}^2)$ and $\tilde{r}_{n,k}:\mathfrak{m}_{n,x}^k/\mathfrak{m}_{n,x}^{k+1}\to \mathfrak{m}_{n+1,x}^k/\mathfrak{m}_{n+1,x}^{k+1}$, which are respective right inverses of the maps $i_{n,k}:S^k(\mathfrak{m}_{n+1,x}/\mathfrak{m}_{n+1,x}^2)\to S^k(\mathfrak{m}_{n,x}/\mathfrak{m}_{n,x}^2)$
and $\tilde{i}_{n,k}:\mathfrak{m}_{n+1,x}^k/\mathfrak{m}_{n+1,x}^{k+1}\to \mathfrak{m}_{n,x}^k/\mathfrak{m}_{n,x}^{k+1}$ induced by the inclusion $X_n\subset X_{n+1}$. Moreover the diagrams
\[
\begin{array}{ccc}
S^k(\mathfrak{m}_{n,x}/\mathfrak{m}^2_{n,x}) & \stackrel{\varphi_{n,k}}{\longrightarrow} & \mathfrak{m}^k_{n,x}/\mathfrak{m}^{k+1}_{n,x} \\
\mbox{\scriptsize $r_{n,k}$}\downarrow & & \downarrow\mbox{\scriptsize $\tilde{r}_{n,k}$} \\
S^k(\mathfrak{m}_{n+1,x}/\mathfrak{m}^2_{n+1,x}) & \stackrel{\varphi_{n+1,k}}{\longrightarrow} & \mathfrak{m}^k_{n+1,x}/\mathfrak{m}^{k+1}_{n+1,x}
\end{array}
\quad\mbox{and}\quad
\begin{array}{ccc}
S^k(\mathfrak{m}_{n+1,x}/\mathfrak{m}^2_{n+1,x}) & \stackrel{\varphi_{n+1,k}}{\longrightarrow} & \mathfrak{m}^k_{n+1,x}/\mathfrak{m}^{k+1}_{n+1,x} \\
\mbox{\scriptsize $i_{n,k}$}\downarrow & & \downarrow\mbox{\scriptsize $\tilde{i}_{n,k}$} \\
S^k(\mathfrak{m}_{n,x}/\mathfrak{m}^2_{n,x}) & \stackrel{\varphi_{n,k}}{\longrightarrow} & \mathfrak{m}^k_{n,x}/\mathfrak{m}^{k+1}_{n,x}
\end{array}
\]
commute, where $\varphi_{n,k}=\varphi_{X_n,k}$ (see also (\ref{**})).

Since $x\in X_{n_0}$ is singular,
there is $k\geq 2$ such that the map
$\varphi_{n_0,k}:S^k(\mathfrak{m}_{n_0,x}/\mathfrak{m}^2_{n_0,x})\to\mathfrak{m}^k_{n_0,x}/\mathfrak{m}^{k+1}_{n_0,x}$
is not injective, i.e., there is $a_{n_0}\in \ker\varphi_{n_0,k}\setminus\{0\}$.
We define the sequence $\{a_n\}$ by letting
\[
a_n=i_{n,k}\circ\cdots\circ i_{n_0-2,k}\circ i_{n_0-1,k}(a_{n_0})\ \mbox{ if $1\leq n\leq n_0$}\quad
\mbox{and}
\quad a_n=r_{n-1,k}\circ\cdots\circ r_{n_0+1,k}\circ r_{n_0,k}(a_{n_0})\ \mbox{ if $n\geq n_0$}.
\]
Then $a_n\in S^k(\mathfrak{m}_{n,x}/\mathfrak{m}^2_{n,x})$ and $i_{n,k}(a_{n+1})=a_n$ for all $n\geq 1$. Thus the sequence $a:=\{a_n\}$ is an element of the inverse limit $\lim\limits_{\leftarrow}S^k(\mathfrak{m}_{n,x}/\mathfrak{m}^2_{n,x})$. Moreover, we have $a\in\ker \hat\varphi_k\setminus\{0\}$, where $\hat\varphi_k:=\lim\limits_\leftarrow\varphi_{n,k}$. Therefore $\hat\varphi_k$ is not injective, and so $x$ is a singular point of $\mathbf{X}$.
\end{proof}

\subsection{Smoothness criterion for Schubert ind-varieties}
\label{section-6-2}

Let $\mathbf{G}=\mathbf{G}(E)$ (resp., $\mathbf{G}=\mathbf{G}^\omega(E)$).

Let $(A,\preceq_A)$ be a totally ordered set (resp., equipped with an anti-automorphism $i_A$).
A surjective map
 $\sigma:E\to A$ (resp., such that $i_A\circ \sigma=\sigma\circ i_E$)
gives rise to an $E$-compatible generalized flag $\mathcal{F}_\sigma=\{F'_{\sigma,\alpha},F''_{\sigma,\alpha}\}_{\alpha\in A}$ (see (\ref{relation-8})) and to the corresponding ind-variety $\mathbf{X}=\mathbf{Fl}(\mathcal{F}_\sigma,E)$ (resp., $\mathbf{X}=\mathbf{Fl}(\mathcal{F}_\sigma,\omega,E)$) (see Section \ref{section-3}). We consider the Schubert cell $\mathbf{B}\mathcal{F}_\sigma\subset\mathbf{X}$.
We denote its closure in $\mathbf{X}$ by $\mathbf{X}_\sigma$ (resp., $\mathbf{X}_\sigma^\omega$) and call it {\em Schubert ind-variety}.
Note that $\mathbf{X}_\sigma$ and $\mathbf{X}_\sigma^\omega$ depend on the choice of the splitting Borel subgroup $\mathbf{B}\subset\mathbf{G}$.

By Theorems \ref{theorem-1}\,{\rm (c)},\,{\rm (d)} and \ref{theorem-2}\,{\rm (c)},\,{\rm (d)}, the Schubert ind-variety $\mathbf{X}_\sigma$ (resp., $\mathbf{X}_\sigma^\omega$) admits a cell decomposition into Schubert cells $\mathbf{B}\mathcal{F}_\tau$ for $\tau\leq\sigma$ (resp., $\tau\leq_\omega\sigma$).

If $I\subset E$ is a finite subset, then the (finite-dimensional) flag variety $\mathrm{Fl}(\mathcal{F}_\sigma,I)$ (defined in Section \ref{section-3-3}) embeds in a natural way in the ind-variety $\mathbf{Fl}(\mathcal{F}_\sigma,E)$. The intersection $X_{\sigma,I}:=\mathbf{X}_\sigma\cap\mathrm{Fl}(\mathcal{F}_\sigma,I)$
is a Schubert variety in the usual sense. In the case of $\mathbf{G}=\mathbf{G}^\omega(E)$, if the subset $I\subset E$ is $i_E$-stable, the flag variety $\mathrm{Fl}(\mathcal{F}_\sigma,\omega,I)$ embeds in the ind-variety $\mathbf{Fl}(\mathcal{F}_\sigma,\omega,E)$. Again, the intersection $X_{\sigma,I}^\omega:=\mathbf{X}_\sigma^\omega\cap\mathrm{Fl}(\mathcal{F}_\sigma,\omega,I)$ is a Schubert variety in the usual sense.

Note that the Schubert ind-variety $\mathbf{X}_\sigma$ depends on the generalized flag $\mathcal{F}_\sigma$ and on the splitting Borel subgroup $\mathbf{B}$. Recall that $\mathbf{B}$ is the stabilizer of a maximal generalized flag  $\mathcal{F}_0$ (see Propositions \ref{proposition-1}, \ref{proposition-3-new}). 
Our singularity criterion (Theorem \ref{theorem-4} below) requires a technical assumption on $\mathbf{B}$ and $\mathcal{F}_\sigma$:
\begin{itemize}
\item[{\rm (H)}] At least one of the following conditions holds:
\begin{itemize}
\item[\rm (i)] $\mathcal{F}_0$ is a flag (i.e., $(\mathcal{F}_0,\subset)$ is isomorphic as ordered set to a subset of $(\mathbb{Z},\leq)$);
\item[\rm (ii)] $\mathcal{F}_{\sigma}$ is a flag, and $\dim F''_{\sigma,\alpha}/F'_{\sigma,\alpha}$ is finite whenever $0\not=F'_{\sigma,\alpha}\subset F''_{\sigma,\alpha}\not=V$.
\end{itemize}
\end{itemize} 

By $\mathrm{Sing}(X)$ we denote the set of singular points of a variety or an ind-variety $X$.

\begin{theorem}
\label{theorem-4}
Let $\mathbf{G}=\mathbf{G}(E)$ (resp., $\mathbf{G}=\mathbf{G}^\omega(E)$). Let $\sigma,\mathbf{X}_\sigma,\mathbf{X}_\sigma^\omega,X_{\sigma,I},X_{\sigma,I}^\omega$ be as above. Assume that hypothesis {\rm (H)} holds.
The following alternative holds: either
\begin{itemize}
\item[\rm (i)]
the variety $X_{\sigma,I}$ (resp., $X_{\sigma,I}^\omega$) is smooth
for all (resp., $i_E$-stable) finite subsets $I\subset E$; then the ind-variety $\mathbf{X}_\sigma$ (resp., $\mathbf{X}_\sigma^\omega$) is smooth;
\end{itemize}
or
\begin{itemize}
\item[\rm (ii)]
there is a finite subset $I_0\subset E$ such that, for every (resp., $i_E$-stable) finite subset $I\subset E$ with $I\supset I_0$, the variety $X_{\sigma,I}$ (resp., $X_{\sigma,I}^\omega$) is singular; then $\mathbf{X}_\sigma$ (resp., $\mathbf{X}_{\sigma}^\omega$) is singular and
\[\mathrm{Sing}(\mathbf{X}_\sigma)=\bigcup_{I\supset I_0}\mathrm{Sing}(X_{\sigma,I})
\qquad \mbox{(resp., \ $\displaystyle\mathrm{Sing}(\mathbf{X}_\sigma^\omega)=\bigcup_{I\supset I_0,\,\mbox{\scriptsize $i_E$-stable}}\mathrm{Sing}(X_{\sigma,I}^\omega)$).}\]
\end{itemize}
\end{theorem}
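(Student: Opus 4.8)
The plan is to deduce the dichotomy and the singular-locus formula from the two one-sided criteria already established, namely Lemma~\ref{lemma-4} (smoothness passes up to the limit along a cofinal family) and Lemma~\ref{lemma-2} (singularity passes up provided the exhaustion admits retractions). First I would fix a filtration $E=\bigcup_n E_n$ (taken $i_E$-stable in the isotropic case), so that by the cell decomposition of Theorems~\ref{theorem-1}\,(c),(d) and \ref{theorem-2}\,(c),(d) one has the exhaustion $\mathbf{X}_\sigma=\bigcup_n X_{\sigma,E_n}$, with every point lying in $X_{\sigma,E_n}$ for all large $n$; by Proposition~\ref{proposition-3-3-3} the choice of filtration is immaterial. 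With this in hand the smooth alternative (i) is immediate: if all $X_{\sigma,I}$ are smooth, then each $x\in\mathbf{X}_\sigma$ is a smooth point of the cofinal family $\{X_{\sigma,E_n}\}$, so $x$ is smooth in $\mathbf{X}_\sigma$ by Lemma~\ref{lemma-4}. Note also the tautology $X_{\sigma,E_{n+1}}\cap\mathrm{Fl}(\mathcal{F}_\sigma,E_n)=X_{\sigma,E_n}$ (and its isotropic analogue), which identifies the image of the embedding $\iota_n$ with $X_{\sigma,E_n}$ sitting inside $X_{\sigma,E_{n+1}}$.

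The entire weight of the theorem therefore falls on producing, under hypothesis (H), a morphism $r_n\colon X_{\sigma,E_{n+1}}\to X_{\sigma,E_n}$ that is a left inverse of $\iota_n$; this I expect to be the genuine obstacle. On the image of $\iota_n$ the sought map is forced to be $\iota_n^{-1}$, and the point is to extend it to a morphism defined on all of $X_{\sigma,E_{n+1}}$. The mechanism I would use is a contraction of the transverse directions toward the torus-fixed component $\iota_n(X_{\sigma,E_n})$: the image of $\iota_n$ is precisely the connected component, indexed by the positions of $E_{n+1}\setminus E_n$ prescribed by $\mathcal{F}_\sigma$, of the fixed locus in $\mathrm{Fl}(\mathcal{F}_\sigma,E_{n+1})$ of the subtorus of $\mathbf{H}$ acting trivially on $\langle E_n\rangle$. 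Since $X_{\sigma,E_{n+1}}$ is a closed $\mathbf{H}$-stable subvariety of a projective variety, a Bia\l ynicki--Birula limit $N\mapsto\lim_{t\to0}\lambda(t)\cdot N$ for a cocharacter $\lambda$ of $\mathbf{H}$ is a morphism of $X_{\sigma,E_{n+1}}$ into itself; the delicate step is to choose the integer weights of $\lambda$ so that this limit simultaneously fixes the image pointwise and drives the new basis vectors into the graded slots dictated by $\mathcal{F}_\sigma$, so that it lands in $\iota_n(X_{\sigma,E_n})$. This is exactly where (H) is indispensable: it supplies a $\mathbb{Z}$-valued coordinate on which to base the weights, via $(E,\preceq_\mathbf{B})\hookrightarrow(\mathbb{Z},\le)$ under (H)(i) (equivalently, the simple reflections generate $\mathbf{W}$, by Corollary~\ref{corollary-8-new}, so the embeddings are compatible with the usual minimal-parabolic fibrations of the finite-dimensional Schubert varieties) and via the $\mathbb{Z}$-indexing with finite quotients of $\mathcal{F}_\sigma$ under (H)(ii). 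I anticipate that reconciling the two orders $\preceq_\mathbf{B}$ and $\preceq_A$ in this weight bookkeeping, and checking compatibility with the Schubert stratification (Propositions~\ref{proposition-11-new}\,(d) and \ref{proposition-12-new}\,(d)), is the technical heart; without (H) the relevant order types may be dense and no such coordinate exists.

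Granting the retractions, everything else is formal. Composing finitely many $r_n$ gives a retraction $r\colon X_{\sigma,J}\to X_{\sigma,I}$ for any finite $I\subset J$, and the diagram chase inside the proof of Lemma~\ref{lemma-2} shows that $r^*$ carries a nonzero element of $\ker\varphi_{I,k}$ to a nonzero element of $\ker\varphi_{J,k}$; hence if $X_{\sigma,I}$ is singular at $x$, so is $X_{\sigma,J}$. This monotonicity yields the dichotomy: either every $X_{\sigma,I}$ is smooth, which is (i), or some $X_{\sigma,I_0}$ is singular and then $X_{\sigma,J}$ is singular for every (resp.\ $i_E$-stable) finite $J\supset I_0$, which is (ii). For the singular-locus formula in case (ii), the inclusion $\bigcup_{I\supset I_0}\mathrm{Sing}(X_{\sigma,I})\subset\mathrm{Sing}(\mathbf{X}_\sigma)$ is Lemma~\ref{lemma-2} applied at the relevant index, while the reverse inclusion is the contrapositive of Lemma~\ref{lemma-4}: a point smooth in $X_{\sigma,E_n}$ for infinitely many $n$ is smooth in $\mathbf{X}_\sigma$, so a singular point of $\mathbf{X}_\sigma$ is singular in $X_{\sigma,E_n}$ for all large $n$, in particular for some $E_n\supset I_0$. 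The isotropic case $\mathbf{G}=\mathbf{G}^\omega(E)$ runs identically, replacing Proposition~\ref{proposition-11-new} by Proposition~\ref{proposition-12-new} and restricting all finite subsets to be $i_E$-stable throughout.
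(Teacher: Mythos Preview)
Your overall strategy---Lemma~\ref{lemma-4} for the smooth alternative, retractions feeding into Lemma~\ref{lemma-2} for the singular alternative---is exactly the paper's. The gap is in your construction of the retraction. A Bia\l ynicki--Birula limit $N\mapsto\lim_{t\to 0}\lambda(t)\cdot N$ on a projective $\mathbf{H}$-variety is \emph{not} a globally defined morphism: the limit exists pointwise, but the resulting map is a morphism only on each locally closed attracting cell separately (already for $\mathbb{G}_m$ acting on $\mathbb{P}^1$ with two fixed points the limit map is discontinuous). So there is no global morphism $r_n\colon X_{\sigma,E_{n+1}}\to X_{\sigma,E_n}$ to be had this way, and with it your monotonicity argument $\mathrm{Sing}(X_{\sigma,I})\subset\mathrm{Sing}(X_{\sigma,J})$ also collapses.

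The paper resolves both issues by working locally and by decoupling the two ingredients. For each $\tau\in\mathbf{W}(E)\cdot\sigma$ it introduces the open affine ``big cell'' $U_\tau(I)\subset\mathrm{Fl}(\mathcal{F}_\sigma,I)$ centered at $\mathcal{F}_\tau$, and builds the retraction only on $U_\tau(J)\cap X_{\sigma,J}\to U_\tau(I)\cap X_{\sigma,I}$, by explicitly zeroing the matrix entries in the row and column indexed by the single new element $e_J\in J\setminus I$. For this map to land back in $\mathbf{X}_\sigma$ one needs $e_J$ to be extremal in $(J,\preceq_\mathbf{B})$ or in $(\tau(J),\preceq_A)$; hypothesis~(H) is precisely what allows, \emph{after} fixing the point (hence $\tau$), a filtration adding one such extremal element at a time---so the filtration is not chosen once and for all as you propose, but depends on the chart. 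The torus limits you had in mind do appear, but only inside these affine charts, to check that the explicit retraction preserves $\mathbf{X}_\sigma$. The monotonicity of singular loci along $\iota_{I,J}$ is handled separately (Claim~1), via Iversen's theorem that torus-fixed loci of smooth varieties are smooth, identifying $\iota_{I,J}(U_\tau(I)\cap X_{\sigma,I})$ with the $H$-fixed points of $U_\tau(J)\cap X_{\sigma,J}$ for the subtorus acting trivially on $\langle I\rangle$; no retraction is needed for that step.
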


\begin{proof}
We provide the proof only for the case $\mathbf{G}=\mathbf{G}(E)$
(the proof in the case of $\mathbf{G}=\mathbf{G}^\omega(E)$ follows
the same scheme).

We need preliminary constructions and notation.
For a finite subset $I\subset E$ and an element $\tau\in
W(I)\cdot\sigma$, we define closed subgroups of $G(I)$ and $B(I)$ by letting
\begin{eqnarray*}
 & G_\tau(I):=\{g\in G(I):g(e)-e\in\langle e'\in I:\tau(e')\succ_A\tau(e)\rangle\ \ \forall e\in E\}\\
\mbox{and}
& B_\tau(I):=\{g\in G(I):g(e)-e\in\langle e'\in I:e'\prec_\mathbf{B}e,\ \tau(e')\succ_A\tau(e)\rangle\ \ \forall e\in E\}=B(I)\cap G_\tau(I).
\end{eqnarray*}
It is well known that the set
\[U_\tau(I):=\{g\mathcal{F}_\tau:g\in G_\tau(I)\}\]
is an open subvariety of $\mathrm{Fl}(\mathcal{F}_\sigma,I)$,
and the maps
\[\Phi_\tau:G_\tau(I)\to U_\tau(I),\ g\mapsto g\mathcal{F}_\tau\quad\mbox{and}\quad \Phi_\tau'=\Phi_\tau|_{B_\tau(I)}:B_\tau(I)\to B(I)\mathcal{F}_\tau\]
are isomorphisms of algebraic varieties. Thus, for every
$\tau\in\mathbf{W}(E)\cdot\sigma$, we obtain an open ind-subvariety
of $\mathbf{Fl}(\mathcal{F}_\sigma,E)$ by letting
\[\mathbf{U}_\tau:=\bigcup_{I}U_\tau(I),\]
where the union is taken over finite subsets $I\subset E$ such that $\tau\in W(I)\cdot\sigma$.
Clearly $\mathbf{B}\mathcal{F}_\tau\subset\mathbf{U}_\tau$, hence by
Theorem \ref{theorem-1}\,{\rm (a)} the open subsets
$\mathbf{U}_\tau$ (for $\tau\in\mathbf{W}(E)\cdot\sigma$) cover the
ind-variety $\mathbf{Fl}(\mathcal{F}_\sigma,E)$.

Let $I,J\subset E$ be finite subsets such that $I\subset J$. Let
$\mathrm{Fl}(\mathcal{F}_\sigma,I)$,
$\mathrm{Fl}(\mathcal{F}_\sigma,J)$ be corresponding
finite-dimensional flag varieties, and let
$\iota_{I,J}:\mathrm{Fl}(\mathcal{F}_\sigma,I)\to
\mathrm{Fl}(\mathcal{F}_\sigma,J)$ be the embedding defined in
Section \ref{section-3-3}. As noted in Proposition
\ref{proposition-11-new},
 we have $\iota_{I,J}(B(I)\mathcal{F}_\sigma)\subset B(J)\mathcal{F}_\sigma$,
hence $\iota_{I,J}(X_{\sigma,I})\subset X_{\sigma,J}$.

Let $\tau\in W(I)\cdot\sigma$. The inclusion $G_\tau(I)\subset
G_\tau(J)$ holds. Moreover, using that $g(e)=e$ for all $g\in
G_\tau(I)$, all $e\in J\setminus I$, in view of the definition of
the map $\iota_{I,J}$, we have
$\iota_{I,J}(g\mathcal{F}_\tau)=g\mathcal{F}_\tau\in U_\tau(J)$ for
all $g\in G_\tau(I)$. Hence the map $\iota_{I,J}$ restricts to an
embedding $\iota'_{I,J}:U_{\tau}(I)\cap X_{\sigma,I}\to
U_{\tau}(J)\cap X_{\sigma,J}$.

\medskip
\noindent {\it Claim 1.} Let $I,J\subset E$ be finite subsets such that $I\subset J$ and let $\tau\in W(I)\cdot\sigma$. Then, $\iota_{I,J}'$ restricts to an embedding $U_\tau(I)\cap\mathrm{Sing}(X_{\sigma,I})\to U_\tau(J)\cap\mathrm{Sing}(X_{\sigma,J})$.

Let $H\subset G(J)$ be the torus formed by the elements $h\in G(J)$ such that $h(e)=e$ for all $e\in I$ and $h(e)\in\mathbb{K}^*e$ for all $e\in J\setminus I$.
The torus $H$ acts on $X_{\sigma,J}$. From \cite{Iversen}, it follows that $\mathrm{Sing}((X_{\sigma,J})^H)\subset \mathrm{Sing}(X_{\sigma,J})$, where $(X_{\sigma,J})^H\subset X_{\sigma,J}$ stands for the subset of $H$-fixed points.
On the other hand, 
it is easy to see that the equality $\iota'_{I,J}(U_\tau(I)\cap X_{\sigma,I})=U_\tau(J)\cap (X_{\sigma,J})^H$ holds.
Thereby, 
\[\iota'_{I,J}(U_\tau(I)\cap \mathrm{Sing}(X_{\sigma,I}))=U_\tau(J)\cap\mathrm{Sing}((X_{\sigma,J})^H)\subset\mathrm{Sing}(X_{\sigma,J}).\]
This shows Claim 1.

\medskip
\noindent {\it Claim 2.} 
Let $I,J\subset E$ be finite subsets such that $J=I\cup\{e_J\}$ and let $\tau\in W(I)\cdot\sigma$. Assume that at least one of the following conditions holds:
\begin{itemize}
\item[\rm (i)] $e_J\prec_\mathbf{B}e$ for all $e\in I$; 
\item[\rm (ii)] $e_J\succ_\mathbf{B}e$ for all $e\in I$;
\item[\rm (iii)] $\tau(e_J)\preceq_A\tau(e)$ for all $e\in I$;
\item[\rm (iv)] $\tau(e_J)\succeq_A\tau(e)$ for all $e\in I$.
\end{itemize}
Then the map $\iota'_{I,J}:U_{\tau}(I)\cap X_{\sigma,I}\to
U_{\tau}(J)\cap X_{\sigma,J}$ admits a left
inverse $r'_{I,J}:U_{\tau}(J)\cap X_{\sigma,J}\to U_{\tau}(I)\cap
X_{\sigma,I}$.

\smallskip
We write an element $g\in \mathbf{G}(E)$ as a matrix
$(g_{e',e})_{e',e\in E}$ such that $g(e)=\sum_{e'\in E}g_{e',e}e'$.
Let $G_\tau(J)\to G_\tau(J)$, $g\mapsto g'$ and 
$R_{I,J}:G_\tau(J)\to G_\tau(I)$, $g\mapsto \tilde{g}$ be the
maps defined by
\begin{equation}
\label{newnew-19}
g'_{e',e}=\left\{
\begin{array}{ll}
0 & \mbox{if $e\not=e'=e_J$} \\
g_{e',e} & \mbox{otherwise},
\end{array}
\right.
\quad\mbox{and}\quad
\tilde{g}_{e',e}=\left\{
\begin{array}{ll}

0 & \mbox{if $e\not=e'$ and $e_J\in\{e,e'\}$} \\
g_{e',e} & \mbox{otherwise}.
\end{array}
\right.\end{equation} 
The map $R_{I,J}$ induces a morphism of algebraic
varieties $r_{I,J}:U_\tau(J)\to U_\tau(I)$,
$g\mathcal{F}_\tau\mapsto\tilde{g}\mathcal{F}_\tau$. It is clear
that $\tilde{g}=g$ whenever $g\in G_\tau(I)$, hence
$r_{I,J}(\iota_{I,J}(\mathcal{G}))=\mathcal{G}$ whenever
$\mathcal{G}\in U_\tau(I)$.

We claim that
\begin{equation}
\label{19-newnew}
\mathcal{G}\in U_\tau(J)\cap \mathbf{X}_\sigma\Rightarrow r_{I,J}(\mathcal{G})\in\mathbf{X}_\sigma.
\end{equation}
Let $\mathcal{G}=g\mathcal{F}_\tau$ with $g\in G_\tau(J)$. Assume that $\mathcal{G}\in\mathbf{X}_\sigma$. We first check that
\begin{equation}
\label{newnew-condition-2}
\mathcal{G}':=g'\mathcal{F}_\tau\in\mathbf{X}_\sigma
\end{equation}
with $g'$ as in (\ref{newnew-19}).
We distinguish four cases depending on the conditions {\rm (i)}--{\rm (iv)} of Claim 2.
\begin{itemize}
\item
Assume that condition {\rm (i)} holds.
Let $\mathcal{F}_0=\{F'_{0,e},F''_{0,e}:e\in E\}$ be the maximal generalized flag corresponding to $\mathbf{B}$, i.e., $F'_{0,e}=\langle e':e'\prec_\mathbf{B}e\rangle$ and $F''_{0,e}=\langle e':e'\preceq_\mathbf{B}e\rangle$ (see Section \ref{section-4-1}). In view of condition {\rm (i)} and the definition of the map $g\mapsto g'$, for any $F\in \mathcal{F}_0$ and any linear combination $\sum_{e\in J}\lambda_ee\in \langle J\rangle$, we have
\[\sum_{e\in J}\lambda_eg(e)\in F\Rightarrow \sum_{e\in J}\lambda_eg'(e)\in F.\]
This implication yields $\dim g'(M)\cap\langle J\rangle\cap F\geq \dim g(M)\cap\langle J\rangle\cap F$ for all $M\in\mathcal{F}_\tau$, all $F\in\mathcal{F}_0$.
It is well known that this property implies $g'\mathcal{F}_\tau\in\overline{B(J)g\mathcal{F}_\tau}\subset\mathbf{X}_\sigma$ (see, e.g., \cite{Billey-Lakshmibai}).
\item Assume that condition {\rm (ii)} holds. 
Then every $\mathcal{F}=\{F'_\alpha,F''_\alpha\}_{\alpha\in A}\in B(J)\mathcal{F}_\sigma$ satisfies $F''_\alpha\subset \langle E\setminus\{e_J\}\rangle$
whenever $\alpha\prec_A\sigma(e_J)$. The same property holds whenever $\mathcal{F}\in \overline{B(J)\mathcal{F}_\sigma}=\mathrm{Fl}(\mathcal{F}_\sigma,J)\cap\mathbf{X}_\sigma$.
Applying this observation to $\mathcal{F}=g\mathcal{F}_\tau$ (and noting that $\tau(e_J)=\sigma(e_J)$ because $\tau\in W(I)\cdot\sigma$), we deduce that $g_{e_J,e}=0$ for all $e\not=e_J$, whence $g'=g$. This clearly yields (\ref{newnew-condition-2}) in this case.
\item Assume that condition {\rm (iii)} holds. Then the definition of $G_\tau(J)$ yields $g_{e_J,e}=0$ for all $e\in I$, whence $g'=g$. This implies (\ref{newnew-condition-2}).
\item Finally, assume that condition {\rm (iv)} holds. Then the definition of $G_\tau(J)$ implies that $g(e_J)=e_J$. 
For $t\in\mathbb{K}^*$, let $\tilde{h}_t\in \mathbf{H}(E)$ be defined by
\begin{equation}
\label{newnew-22}
\tilde{h}_t(e)=\left\{\begin{array}{ll}
e & \mbox{if $e\not=e_J$} \\
te_J & \mbox{if $e=e_J$}
\end{array} \quad\mbox{for all $e\in E$.}\right.
\end{equation}
We have $g'\mathcal{F}_\tau=\lim_{t\to 0}\tilde{h}_tg\mathcal{F}_\tau$.
Since $\tilde{h}_tg\mathcal{F}_\tau\in\mathbf{X}_\sigma$ for all $t\in\mathbb{K}^*$, we get $g'\mathcal{F}_\tau\in\mathbf{X}_\sigma$, whence (\ref{newnew-condition-2}).
\end{itemize}
Therefore (\ref{newnew-condition-2}) holds true in all the cases.
Moreover, we have
\[\tilde{g}\mathcal{F}_\tau=\lim_{t\to \infty}\tilde{h}_{t}g'\mathcal{F}_\tau\]
with $\tilde{h}_t$ as in (\ref{newnew-22}).
Since $g'\mathcal{F}_\tau\in\mathbf{X}_\sigma$ (by (\ref{newnew-condition-2})) and $\tilde{h}_t$ stabilizes $\mathbf{X}_\sigma$, we conclude that $r_{I,J}(\mathcal{G})=\tilde{g}\mathcal{F}_\tau\in\mathbf{X}_\sigma$.
Whence (\ref{19-newnew}).

By (\ref{19-newnew}),
the map $r'_{I,J}:U_{\tau}(J)\cap
X_{\sigma,J}\to U_{\tau}(I)\cap X_{\sigma,I}$ obtained by
restriction of $r_{I,J}$ is well defined and fulfills the conditions of Claim 1.

\medskip
Relying on Claims 1 and 2, the proof of the theorem is carried out as
follows. If
$X_{\sigma,I}$
is smooth for all finite subsets $I\subset E$, then Lemma
\ref{lemma-4} guarantees that $\mathbf{X}_\sigma$ is a smooth
ind-variety. We now assume that there is a finite subset $I_0\subset
E$ such that $X_{\sigma,I_0}$ is singular. In this case
Lemma \ref{lemma-4} yields an inclusion
\[\mathrm{Sing}(\mathbf{X}_\sigma)\subset\bigcup_{I\supset I_0}\mathrm{Sing}(X_{\sigma,I})\]
where the union is taken over all finite subsets $I\subset E$ such
that $I\supset I_0$. For completing the proof it is sufficient to
prove that
\begin{equation}
\label{relation-1-last}
\mathrm{Sing}(X_{\sigma,I})\subset\mathrm{Sing}(\mathbf{X}_\sigma)
\end{equation}
for each finite subset $I\subset E$ with $I\supset I_0$. To show
this, let $\mathcal{G}\in\mathrm{Sing}(X_{\sigma,I})$. There is
$\tau\in W(I)\cdot\sigma$ such that $\mathcal{G}\in U_{\tau}(I)$.
We consider the two cases involved in assumption {\rm (H)}.
\begin{itemize}
\item If {\rm (H)\,(i)} holds, then let $e_0=\min I$ and $e_1=\max I$ (for the order $\preceq_\mathbf{B}$), and set $I'=\{e\in E:e_0\preceq_\mathbf{B}e\preceq_\mathbf{B}e_1\}$.
The set $I'$ is finite (by {\rm (H)\,(i)}). Moreover, again relying on {\rm (H)\,(i)}, we can find a filtration $E=\bigcup_{n\geq 1}E_n$ with $E_1=I'$ and $E_n=E_{n-1}\cup\{e_n\}$ for all $n\geq 2$, where $e_n$ is either the minimum or the maximum of $(E_n,\preceq_\mathbf{B})$.
\item If {\rm (H)\,(ii)} holds, then let $\alpha_0=\min\{\tau(e):e\in I\}$ and $\alpha_1=\max\{\tau(e):e\in I\}$ (for the order $\preceq_A$), and set $I'=I\cup \{e\in E:\alpha_0\prec_A\tau(e)\prec_A\alpha_1\}$.
The first part of  {\rm (H)\,(ii)} ensures that there are at most finitely many $\alpha\in A$ such that 
$\alpha_0\prec_A\alpha\prec_A\alpha_1$, while the second part of {\rm (H)\,(ii)} (together with the fact that $\tau\in \mathbf{W}(E)\cdot\sigma$) implies that $\tau^{-1}(\alpha)$ is finite for each such $\alpha$, hence the set $I'$ is finite.
Again relying on {\rm (H)\,(ii)}, we can construct a filtration $E=\bigcup_{n\geq 1}E_n$ with $E_1=I'$ and $E_n=E_{n-1}\cup\{e_n\}$ for all $n\geq 2$, where $e_n$ satisfies either $\tau(e_n)\preceq_A\tau(e)$ for all $e\in E_{n-1}$ or $\tau(e_n)\succeq_A\tau(e)$ for all $e\in E_{n-1}$.
\end{itemize}
In both cases, we get a filtration
$\{E_n\}_{n\geq 1}$ of  $E$ by finite subsets
such that $I\subset E_1$ and, for every $n\geq 2$, the pair $(E_{n-1},E_n)$ satisfies one of the conditions {\rm (i)}--{\rm (iv)} of Claim 2. We obtain an exhaustion of the open subset
$\mathbf{U}_{\tau}\cap\mathbf{X}_\sigma$ of $\mathbf{X}_\sigma$
given by the chain
\[U_{\sigma,\tau,1}\stackrel{\iota_{1}}{\hookrightarrow} U_{\sigma,\tau,2}\stackrel{\iota_{2}}{\hookrightarrow} U_{\sigma,\tau,3}\hookrightarrow\ldots\hookrightarrow U_{\sigma,\tau,n}\stackrel{\iota_n}{\hookrightarrow}\ldots\]
where $U_{\sigma,\tau,n}=U_{\tau}(E_n)\cap X_{\sigma,E_n}$ and
$\iota_n=\iota'_{E_n,E_{n+1}}$. Claim 1 implies that $\mathcal{G}$ is a
singular point of $U_{\sigma,\tau,1}$. By Claim 2,
 we can apply Lemma \ref{lemma-2} which implies that $\mathcal{G}$ is a singular point of $\mathbf{U}_\tau\cap\mathbf{X}_\sigma$, hence of $\mathbf{X}_\sigma$.
Therefore the inclusion  (\ref{relation-1-last}) holds. The proof
is complete.
%
%By assumption {\rm (H)}, we can find a finite subset $I'\subset E$ with $I\subset I'$ such that
%\begin{equation}
%\label{newnew-condition-4}
%\left\{\begin{array}{ll}
%& \mbox{for all $e,e'\in I'$, $\{e''\in E:e\prec_\mathbf{B}e''\prec_\mathbf{B}e'\}\subset I'$} \\
%\mbox{or} & \mbox{for all $e,e'\in I'$, $\{e''\in E:\tau(e)\prec_A\tau(e'')\prec_A\tau(e')\}\subset I'$.}
%\end{array}\right.
%\end{equation}
%Then, again relying on {\rm (H)}, we can construct a filtration $\{E_n\}_{n\geq %1}$ of  $E$ by finite subsets
%such that $E_1=I'$, $E_n=E_{n-1}\cup\{e_n\}$, and for every $n\geq 2$ the %pair $(E_{n-1},e_n)$ satisfies one of the conditions {\rm (i)}--{\rm (iv)} %of Claim 2. We obtain an exhaustion of the open subset
%$\mathbf{U}_{\tau}\cap\mathbf{X}_\sigma$ of $\mathbf{X}_\sigma$
%given by the chain
%\[U_{\sigma,\tau,1}\stackrel{\iota_{1}}{\hookrightarrow} U_{\sigma,\tau,2}\stackrel{\iota_{2}}{\hookrightarrow} %U_{\sigma,\tau,3}\hookrightarrow\ldots\hookrightarrow U_{\sigma,\tau,n}\stackrel{\iota_n}{\hookrightarrow}\ldots\]
%where $U_{\sigma,\tau,n}=U_{\tau}(E_n)\cap X_{\sigma,E_n}$ and
%$\iota_n=\iota'_{E_n,E_{n+1}}$. Claim 1 implies that $\mathcal{G}$ is a
%singular point of $U_{\sigma,\tau,1}$. By Claim 2,
% we can apply Lemma \ref{lemma-2} which implies that $\mathcal{G}$ is a %singular point of $\mathbf{U}_\tau\cap\mathbf{X}_\sigma$, hence of $\mathbf{X}_\sigma$.
%Therefore the inclusion  (\ref{relation-1-last}) holds. The proof
%is complete.
\end{proof}

\begin{remark}
{\rm (a)} Note that hypothesis {\rm (H)} is valid in the case where $\mathbf{Fl}(\mathcal{F}_\sigma,E)$ is an ind-grassmannian. \\
{\rm (b)} Hypothesis {\rm (H)} is needed in the proof of Theorem \ref{theorem-4} for showing Claim 2 which is necessary for applying Lemma \ref{lemma-2}. We have no indication whatsoever that Theorem \ref{theorem-4} is not valid in general (without hypothesis {\rm (H)}).
\end{remark}

\begin{remark}
The Schubert ind-varieties $\mathbf{X}_\sigma$ considered in this paper form a narrower class than the ones considered by H.\ Salmasian \cite{Salmasian}. Indeed, a closed ind-subvariety $\mathbf{X}\subset\mathbf{Fl}(\mathcal{F},E)$ such that $\mathbf{X}\cap\mathrm{Fl}(\mathcal{F},I)$ is a Schubert variety for all finite subsets $I\subset E$ is a Schubert ind-variety in the sense of \cite{Salmasian}, and it may happen that $\mathbf{X}$ has no open $\mathbf{B}$-orbit and admits no smooth point in this case (see \cite[Section 2]{Salmasian}). On the other hand, the ind-variety $\mathbf{X}_\sigma$ defined in Section \ref{section-6-2} always contains the open $\mathbf{B}$-orbit $\mathbf{B}\mathcal{F}_\sigma$, and the points of $\mathbf{B}\mathcal{F}_\sigma$ are smooth in $\mathbf{X}_\sigma$.
\end{remark}

\subsection{Examples}

\label{section-4.2}

A consequence of Theorem \ref{theorem-4} is that the smoothness criteria for Schubert varieties of (finite-dimensional) flag varieties that are expressed in terms of pattern avoidance,
may pass to the limit at infinity.

For example, let us apply Theorem \ref{theorem-4} to the ind-variety $\mathbf{Fl}(\mathcal{F},E)$ for an $E$-compatible maximal generalized flag $\mathcal{F}$. In this case we have two total orders on the basis $E$: the first one $\preceq_\mathbf{B}$ corresponds to the splitting Borel subgroup $\mathbf{B}$, and the second order $\preceq_\mathcal{F}$ corresponds to the maximal generalized flag $\mathcal{F}$, i.e., $\mathcal{F}=\{F'_e,F''_e:e\in E\}$ is given by \[F'_e=\langle e'\in E:e'\prec_\mathcal{F}e\rangle,\quad F''_e=\langle e'\in E:e'\preceq_\mathcal{F}e\rangle.\]
By Theorem \ref{theorem-1}, the Schubert ind-varieties $\mathbf{X}_\sigma$ of $\mathbf{Fl}(\mathcal{F},E)$ are parametrized by the permutations $\sigma\in\mathbf{W}(E)$, and we have
\[\dim\mathbf{X}_\sigma=n_\mathrm{inv}(\sigma)=|\{(e,e')\in E:e\prec_\mathbf{B}e',\ \sigma(e')\prec_\mathcal{F}\sigma(e)\}|.\]
From Theorem \ref{theorem-4} and the known characterization of smooth Schubert varieties of full flag varieties in terms of pattern avoidance (see \cite[\S8]{Billey-Lakshmibai}) we obtain the following criterion.

\begin{corollary}
\label{corollary-2}
Assume that $\mathcal{F}$ or $\mathcal{F}_0$ is a flag, so that hypothesis {\rm (H)} is satisfied.
Let $\sigma\in\mathbf{W}(E)$. Then the Schubert ind-variety $\mathbf{X}_\sigma$ is singular if and only if there exist $e_1,e_2,e_3,e_4\in E$ such that $e_1\prec_\mathbf{B}e_2\prec_\mathbf{B}e_3\prec_\mathbf{B}e_4$ and ($\sigma(e_3)\prec_\mathcal{F}\sigma(e_4)\prec_\mathcal{F}\sigma(e_1)\prec_\mathcal{F}\sigma(e_2)$ or $\sigma(e_4)\prec_\mathcal{F}\sigma(e_2)\prec_\mathcal{F}\sigma(e_3)\prec_\mathcal{F}\sigma(e_1)$).
\end{corollary}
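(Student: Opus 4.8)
The plan is to deduce the corollary from Theorem \ref{theorem-4} by feeding in the classical criterion of Lakshmibai--Sandhya: a Schubert variety of a full (type $A$) flag variety is smooth if and only if the indexing permutation avoids the two patterns $3412$ and $4231$ (see \cite[\S8]{Billey-Lakshmibai}). Since hypothesis {\rm (H)} holds by assumption, Theorem \ref{theorem-4} reduces the singularity of $\mathbf{X}_\sigma$ to the singularity of the finite-dimensional Schubert varieties $X_{\sigma,I}=\mathbf{X}_\sigma\cap\mathrm{Fl}(\mathcal{F},I)$. Because $\mathcal{F}$ is a \emph{maximal} generalized flag, each $\mathrm{Fl}(\mathcal{F},I)$ (for a finite $I\subset E$) is the full flag variety of the space $\langle I\rangle$; and for every finite $I$ containing the finite support $\{e\in E:\sigma(e)\neq e\}$, the permutation $\sigma$ restricts to a genuine permutation $\sigma|_I$ of $I$, for which $X_{\sigma,I}$ is the usual Schubert variety indexed by $\sigma|_I$. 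Here the ``positions'' are read off in the order $\preceq_\mathbf{B}$ and the ``values'' in the order $\preceq_\mathcal{F}$, consistently with the inversion-number formula $\dim\mathbf{X}_\sigma=n_\mathrm{inv}(\sigma)$ recalled above.

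First I would record the exact dictionary between numerical patterns and order conditions. Reading $\sigma|_I$ as an element of the symmetric group via $\preceq_\mathbf{B}$ on the domain and $\preceq_\mathcal{F}$ on the range, containment of the pattern $3412$ means that there exist $e_1\prec_\mathbf{B}e_2\prec_\mathbf{B}e_3\prec_\mathbf{B}e_4$ in $I$ with $\sigma(e_3)\prec_\mathcal{F}\sigma(e_4)\prec_\mathcal{F}\sigma(e_1)\prec_\mathcal{F}\sigma(e_2)$, while containment of $4231$ means that there exist such elements with $\sigma(e_4)\prec_\mathcal{F}\sigma(e_2)\prec_\mathcal{F}\sigma(e_3)\prec_\mathcal{F}\sigma(e_1)$. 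These are precisely the two configurations in the statement, so by Lakshmibai--Sandhya the variety $X_{\sigma,I}$ is singular if and only if $\sigma|_I$ contains at least one of the two patterns. I would also remark that both patterns are self-inverse (and invariant under $w\mapsto w_0ww_0$), so the criterion is insensitive to whether one works with $\sigma$ or $\sigma^{-1}$ and to the choice of opposite conventions; this removes any ambiguity in identifying $X_{\sigma,I}$ with a Schubert variety.

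The remaining step is the passage from finite $I$ to the limit, for which the key observation is that pattern-containment is monotone in $I$: four elements $e_1,e_2,e_3,e_4$ realizing a forbidden pattern continue to do so in every larger finite subset, and conversely any forbidden pattern in some $\sigma|_I$ is already witnessed by four elements of $E$. Combining this with the alternative of Theorem \ref{theorem-4}, I would argue as follows. If $\sigma$ avoids both patterns, then no $\sigma|_I$ contains a forbidden pattern, every $X_{\sigma,I}$ is smooth, and alternative {\rm (i)} gives that $\mathbf{X}_\sigma$ is smooth. If instead four elements $e_1\prec_\mathbf{B}e_2\prec_\mathbf{B}e_3\prec_\mathbf{B}e_4$ form one of the two patterns, then for every finite $I$ containing them together with the support of $\sigma$ the variety $X_{\sigma,I}$ is singular; hence alternative {\rm (i)} fails, we are in alternative {\rm (ii)}, and $\mathbf{X}_\sigma$ is singular.

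I expect the main point requiring care to be the bookkeeping in the translations of the previous paragraph: matching the numerical patterns $3412$ and $4231$ to the correct inequalities among $\sigma(e_1),\dots,\sigma(e_4)$, and verifying that $X_{\sigma,I}$ is indeed indexed by $\sigma|_I$ with positions and values measured by the intended orders $\preceq_\mathbf{B}$ and $\preceq_\mathcal{F}$. Once this dictionary is pinned down, the corollary is a formal consequence of Theorem \ref{theorem-4} and the monotonicity of pattern-containment, and no further geometric input is needed, since all the genuine analytic content has been absorbed into Theorem \ref{theorem-4}.
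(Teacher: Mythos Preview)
Your proposal is correct and follows exactly the route the paper indicates: the corollary is stated as an immediate consequence of Theorem~\ref{theorem-4} together with the Lakshmibai--Sandhya pattern-avoidance criterion \cite[\S8]{Billey-Lakshmibai}, and the paper gives no further details beyond that one-line deduction. Your write-up in fact supplies more of the bookkeeping (the dictionary with the orders $\preceq_\mathbf{B}$, $\preceq_\mathcal{F}$ and the monotonicity of pattern-containment) than the paper does.
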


\begin{remark}
{\rm (a)}
Corollary \ref{corollary-2} shows in particular that, if the basis $E$ comprises infinitely many pairwise disjoint quadruples $(e_1,e_2,e_3,e_4)$ such that $e_1\prec_\mathbf{B}e_2\prec_\mathbf{B}e_3\prec_\mathbf{B}e_4$ and, say, $e_3\prec_\mathcal{F}e_4\prec_\mathcal{F}e_1\prec_\mathcal{F}e_2$, then for every permutation $\sigma\in\mathbf{W}(E)$, the Schubert ind-variety $\mathbf{X}_\sigma$ is singular. Thus, there exist pairs $(\mathbf{B},\mathcal{F})$ such that all Schubert ind-varieties of the ind-variety $\mathbf{Fl}(\mathcal{F},E)$ are singular. \\
{\rm (b)}
In the case where the ind-variety $\mathbf{Fl}(\mathcal{F},E)$ has finite-dimensional Schubert cells, it has one cell equal to a single point (see Theorem \ref{theorem-3}), hence has at least one smooth Schubert ind-variety.
Note that $\mathbf{Fl}(\mathcal{F},E)$ may have smooth Schubert ind-varieties although all its Schubert cells are infinite dimensional. Take for instance $E=\{e_i\}_{i\in\mathbb{Z}}$, let the order $\preceq_\mathbf{B}$ be the natural order on $\mathbb{Z}$, and let the order $\preceq_\mathcal{F}$ be the inverse order, i.e., $i\preceq_\mathcal{F}j$ if and only if $i\geq j$. Then every Schubert cell of $\mathbf{Fl}(\mathcal{F},E)$ is infinite dimensional, but the permutation $\sigma=\mathrm{id}_E\in\mathbf{W}(E)$ avoids the two forbidden patterns of Corollary \ref{corollary-2}, hence $\mathbf{X}_\sigma$ is smooth.
\end{remark}

As a second example,
we apply Theorem \ref{theorem-4} to the case of the ind-grassmannian $\mathbf{Gr}(2)$. In this case, for a splitting Borel subgroup $\mathbf{B}$, the Schubert ind-varieties $\mathbf{X}_\sigma$ are parametrized by the surjective maps $E\to \{1,2\}$ such that $|\sigma^{-1}(1)|=2$, or equivalently by the pairs of elements $\sigma=\{\sigma_1,\sigma_2\}\subset E$.
From Theorem \ref{theorem-4} and \cite[\S9.3.3]{Billey-Lakshmibai} we have:

\begin{corollary}
Let $\sigma=\{\sigma_1,\sigma_2\}\subset E$ with $\sigma_1\prec_\mathbf{B}\sigma_2$.
The Schubert ind-variety $\mathbf{X}_\sigma$ is smooth if and only if $\sigma_1$ is the smallest element of the ordered set $(E,\preceq_\mathbf{B})$ or $\sigma_1,\sigma_2$ are two consecutive elements of $(E,\preceq_\mathbf{B})$.
\end{corollary}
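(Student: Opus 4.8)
The plan is to combine the smoothness criterion of Theorem~\ref{theorem-4} with the classical description of smooth Schubert varieties in a finite-dimensional Grassmannian. Since $\mathbf{Gr}(2)$ is an ind-grassmannian, hypothesis~{\rm (H)} is automatically fulfilled, so Theorem~\ref{theorem-4} applies. Its dichotomy reduces the question to the finite level: as alternative~{\rm (i)} requires \emph{all} the intersections $X_{\sigma,I}\subset\mathrm{Fl}(\mathcal{F}_\sigma,I)$ to be smooth, while failure of~{\rm (i)} forces alternative~{\rm (ii)} and hence singularity of $\mathbf{X}_\sigma$, we obtain that $\mathbf{X}_\sigma$ is smooth if and only if the Schubert variety $X_{\sigma,I}$ is smooth for every finite subset $I\subset E$ containing $\sigma=\{\sigma_1,\sigma_2\}$.

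I would then recall the finite-dimensional input. Here $\mathrm{Fl}(\mathcal{F}_\sigma,I)$ is the Grassmannian $\mathrm{Gr}(2,\langle I\rangle)$, whose reference flag is induced by $(I,\preceq_\mathbf{B})$, and $X_{\sigma,I}$ is the Schubert variety indexed by the pair $\{\sigma_1,\sigma_2\}$, equivalently by the partition $\lambda=(\lambda_1,\lambda_2)$ with $\lambda_2=|\{e\in I:e\prec_\mathbf{B}\sigma_1\}|$ and $\lambda_1=|\{e\in I:e\prec_\mathbf{B}\sigma_2\}|-1$. By the smoothness criterion for Schubert varieties of Grassmannians (\cite[\S9.3.3]{Billey-Lakshmibai}), $X_{\sigma,I}$ is smooth precisely when $\lambda$ is a rectangle, i.e.\ when $\lambda_2=0$ or $\lambda_1=\lambda_2$. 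A direct reading of the two defining quantities shows that $\lambda_2=0$ means that $\sigma_1$ is the smallest element of $(I,\preceq_\mathbf{B})$, and that $\lambda_1=\lambda_2$ means that $\sigma_1$ and $\sigma_2$ are consecutive in $(I,\preceq_\mathbf{B})$.

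It remains to pass both conditions to the limit, which is where the two halves of the statement separate. If $\sigma_1$ is the smallest element of $(E,\preceq_\mathbf{B})$, or if $\sigma_1,\sigma_2$ are consecutive in $(E,\preceq_\mathbf{B})$, then the same property holds in every finite $I\supset\sigma$, since no smaller element, respectively no intermediate element, can belong to $I$ when none exists in $E$; thus every $X_{\sigma,I}$ is smooth and so is $\mathbf{X}_\sigma$. Conversely, if neither holds in $(E,\preceq_\mathbf{B})$, I would pick $e_0\prec_\mathbf{B}\sigma_1$ and $e'$ with $\sigma_1\prec_\mathbf{B}e'\prec_\mathbf{B}\sigma_2$, and set $I_0=\{e_0,\sigma_1,e',\sigma_2\}$. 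For every finite $I\supset I_0$ the element $e_0$ prevents $\sigma_1$ from being minimal in $I$ and $e'$ prevents $\sigma_1,\sigma_2$ from being consecutive in $I$, so $X_{\sigma,I}$ is singular; by the reduction of the first paragraph, $\mathbf{X}_\sigma$ is then singular.

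The only step that genuinely requires care is the matching carried out in the second paragraph: one must verify that the two admissible rectangle shapes $\lambda_2=0$ and $\lambda_1=\lambda_2$ correspond \emph{exactly} to the two combinatorial conditions in the statement. Once this identification is made, the passage to the limit is immediate, because each condition is inherited by subsets and can be violated by adjoining a single witnessing element---precisely the behaviour needed to land in alternative~{\rm (i)} or~{\rm (ii)} of Theorem~\ref{theorem-4}.
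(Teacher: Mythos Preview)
Your proposal is correct and follows exactly the approach indicated by the paper, which simply cites Theorem~\ref{theorem-4} together with \cite[\S9.3.3]{Billey-Lakshmibai} without spelling out details. You have filled in precisely the intended argument: reduction to the finite level via Theorem~\ref{theorem-4} (with hypothesis~{\rm (H)} valid by the remark following the theorem), identification of the rectangular-partition criterion with the two combinatorial conditions on $\sigma_1,\sigma_2$, and the straightforward passage to the limit in both directions.
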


\section*{Acknowledgement}

This project was supported in part by the Priority Program ``Representation Theory'' of the DFG (SPP 1388).
L. Fresse acknowledges partial support through
ISF Grant Nr. 882/10 and ANR Grant Nr. ANR-12-PDOC-0031.
I. Penkov thanks the Mittag-Leffler Institute in Djursholm for its hospitality.

\end{document}